\documentclass[11pt]{article}
\usepackage {amssymb} 
\usepackage {amsmath}
\usepackage{amsthm}
\usepackage{graphicx}
\usepackage {amscd}
\usepackage{subfigure}
\usepackage{color}
\usepackage{multirow}
\usepackage{tabu}
\usepackage[titletoc, title]{appendix}
\usepackage{diagbox}
\usepackage[colorlinks, citecolor=red]{hyperref}
\usepackage{stmaryrd}
\usepackage[all,color]{xy}






\usepackage{geometry}  \geometry{ a4paper, total={160mm,250mm}, }

\newcommand{\vol}{{\rm vol}}
\newcommand{\ord}{{\rm ord}}
\newcommand{\fm}{\mathfrak{m}}
\newcommand{\fa}{\mathfrak{a}}
\newcommand{\cO}{\mathcal{O}}
\newcommand{\bR}{\mathbb{R}}
\newcommand{\bC}{\mathbb{C}}
\newcommand{\bZ}{\mathbb{Z}}
\newcommand{\lct}{{\rm lct}}
\newcommand{\wt}{{\rm wt}}

\newcommand{\Val}{{\rm Val}}

\newcommand{\hvol}{{\widehat{\rm vol}}}

\newcommand{\cF}{{\mathcal{F}}}
\newcommand{\bV}{{\mathbb{V}}}
\newcommand{\cI}{{\mathcal{I}}}

\newcommand{\cJ}{{\mathcal{J}}}
\newcommand{\fb}{{\mathfrak{b}}}

\newcommand{\bQ}{{\mathbb{Q}}}

\newcommand{\cX}{{\mathcal{X}}}
\newcommand{\cV}{{\mathcal{V}}}
\newcommand{\cL}{{\mathcal{L}}}

\newcommand{\cR}{{\mathcal{R}}}
\newcommand{\cY}{{\mathcal{Y}}}

\newcommand{\cW}{{\mathcal{W}}}

\newcommand{\bG}{{\mathbb{G}}}

\newcommand{\bP}{{\mathbb{P}}}

\newcommand{\fc}{{\mathfrak{c}}}

\newcommand{\bin}{{\bf in}}
\newcommand{\cC}{{\mathcal{C}}}
\newcommand{\cS}{{\mathcal{S}}}

\newcommand{\cA}{{\mathcal{A}}}

\newcommand{\cD}{{\mathcal{D}}}

\newcommand{\cH}{{\mathcal{H}}}

\newcommand{\cE}{{\mathcal{E}}}

\newcommand{\Spec}{\mathrm{Spec}}

\newcommand{\mult}{{\rm mult}}
\newcommand{\bA}{{\mathbb{A}}}
\newcommand{\cZ}{{\mathcal{Z}}}

\newcommand{\kB}{{\mathfrak{B}}}

\newcommand{\gr}{{\rm gr}}

\newtheorem{thm}{Theorem}[section]

\newtheorem{lem}[thm]{Lemma}

\newtheorem{defn}[thm]{Definition}

\newtheorem{prop}[thm]{Proposition}

\newtheorem{rem}[thm]{Remark}
\newtheorem{exmp}[thm]{Example}
\newtheorem{thmx}{Theorem}


\begin{document}

\title{ Stability of Valuations and Koll\'ar Components}
\author{Chi Li and Chenyang Xu}
\date{\today}
\maketitle
\abstract{We prove that among all Koll\'ar components obtained by plt blow ups of a klt singularity $o\in (X,D)$, there is at most one that is (log-)K-semistable. We achieve this by showing that if such a Koll\'ar component exists, it uniquely minimizes the normalized volume function introduced in \cite{Li15a} among all divisorial valuations. Conversely, we show that any divisorial minimizer of the normalized volume function yields a K-semistable Koll\'ar component.   We also prove that for any klt singularity, the infimum of the normalized volume function is always approximated by the normalized volumes of Koll\'ar components.} 
\tableofcontents

\section{Introduction}
Throughout this paper, we work over the field $\bC$ of complex numbers. It has been well known  by people working in higher dimensional geometry that there is an analogue between the local objects Kawamata log terminal (klt)  singularities, and its global counterparts  log Fano varieties (cf. e.g. \cite{Sho00, Xu14} etc.). From this comparison, since the stability theory of Fano varieties has been a central object in people's study in the last three decades, it is natural to expect that there is a local stability theory on singularities.  The primary goal of this preprint is to develope such a theory.  In another word,  we want to investigate singularities using the tools from the theory of K-stability, a notion which was first defined in \cite{Tia97} and  later algebraically formulated in \cite{Don02}.  We note that this interaction between birational geometry and K-stability theory  has been proved to significantly fertilize  both subjects (cf. \cite{Oda12, Oda13, LX14, WX14, LWX14, Fuj15} etc.).   

For the stability theory of log Fano varieties, a crucial ingredient is the CM weight. Philosophically, the stability of log Fano varieties is equivalent to minimizing the CM weight.  In the stability theory of singularities, we fix the singularity $(X,o)$ and look for `the most stable' valuation $v\in \Val_{X,o}$ which is centered over $o$. Thus the first step of establishing a local stability theory for $(X,o)$ would be to find the right counterpart of CM weight in the local setting. As a candidate the first named author defined in \cite{Li15a} the normalized volume function $\hvol_{(X,D),o}$ on the space of valuations centered at $o$. Its derivatives at the canonical divisorial valuation over a klt cone singularity along certain tangent directions associated to special test configurations are indeed the CM weights. So in some sense, using the local picture, the normalized volume function carries more information than the CM weight. 

By the above discussion and inspired by the global theory, we focus on studying the valuation that minimizes the normalized volume function, which is conjectured to uniquely exist and  ought to be thought as the `(semi-)stable' object.  This picture is understood well in the case of Sasakian geometry where one only considers the valuations coming from the Reeb vector fields induced by a good torus action (e.g. \cite{MSY08, CoSz16}). Here we can naturally compare the stability of the singularity with the stability for the base. However, this requires the extra cone structure. By investigating the minimizer of the normalized volume function on all valuations, our plan, as we mentioned, built on the previous work (\cite{Li15a, Li15b, LL16}), is to establish an intrinsic stability which only depends on the isomorphic class of the singularity.
 We recall that it was shown in \cite{Li15b, LL16} that a Fano manifold $X$ is K-semistable, if and only if that among all valuations over the vertex $o$ of the cone $C(X)$ given by a multiple of $-K_X$,  the canonical valuation obtained by blowing up the vertex $o\in C(X)$ minimizes the normalized volume function.  This gives evidence to justify that at least for these singularities, our study is in the right direction. 

For an {\it arbitrary} klt singularity, there is no direct way to associate a global object. Nevertheless, in differential geometry, when there is a `canonical' metric, the metric tangent cone around the singularity is the stable object in the category of metric spaces. With a similar philosophy,  we expect that the minimizer of the normalized volume function always gives {\it a degeneration} to a K-semistable Fano cone singularity in the Sasakian setting, and conversely any such degeneration should be provided by a minimizer of ${\hvol}_{X,o}$.  In the current paper, we work out this picture in the case that the minimizer is divisorial, by  implementing the machinery of the minimal model program (MMP) (based on the foundational results in \cite{BCHM10}). So our treatment will be purely algebraic though it is strongly inspired by analytic results in the study of K\"{a}hler-Einstein/Sasaki-Einstein metrics.

One ingredient we introduce is to define the volume associated to a birational model and then connect it to the normalized volume of a valuation. For studying the divisorial valuations, the class of models which play a central role here are the ones obtained in the construction of {\it Koll\'ar} component (cf. \cite{Xu14}): for an arbitrary $n$-dimensional klt singularity $(X,o)$, we can use minimal model program to construct a birational model whose exceptional locus is an $(n-1)$-dimensional log Fano variety. In this paper, we will systematically develop the tools of using Koll\'ar components to understand the normalized volume function and its minimizers. In fact, Koll\'ar components can be considered as the local analogue of special degenerations studied in \cite{LX14}. 
We also observe that in the set-up of Sasakian geometry, a Reeb vector gives rise to a Koll\'ar component if and only if it is rational, i.e., it is quasi-regular. 
  
Therefore, to summarize, the aim of this paper is of twofolds. On one hand, we aim to use the construction of Koll\'ar components to get information of the space of valuations, especially for the minimizers of the normalized volume function. On the other hand, in the reverse direction, we want to use the viewpoint of stability to study the construction of Koll\'ar components in birational geometry, and search out a more canonical object under suitable assumptions.   

We also expect that for any klt singularity $(X,o)$, even when the minimizer is not necessarily divisorial, we can still use suitable birational models to degenerate $(X,o)$ to a K-semistable (possibly irregular) singularity with a torus action of higher rank. However, it seems to involve a significant amount of new technical issues. 

In the below, we will give more details. 

\subsection{Koll\'ar components}
\begin{defn}[Koll\'ar component]\label{d-kollar}
Let $o\in (X,D)$ be a klt singularity. We call a proper birational morphism $\mu:Y\to X$ provides a {\bf Koll\'ar component $S$}, if $\mu$ is isomorphic over $X\setminus \{o\}$, and $\mu^{-1}(o)$ is an irreducible divisor $S$, such that $(Y,S+\mu^{-1}_*D)$ is purely log terminal (plt) and $-S$ is $\mathbb{Q}$-Cartier and ample over $X$.
\end{defn}

We easily see that the birational model $Y$ is uniquely determined once the divisorial valuation $S$ is fixed, and if we denote 
\begin{equation}\label{eq-diffS}
(K_Y+S+\mu_*^{-1}D)|_S=K_S+\Delta_S,
\end{equation}
(see \cite[Definition 4.2]{Kol13}), then $(S,\Delta_S)$ is a klt log Fano pair. 

 Given any klt singularity $o\in (X,D)$, after the necessary minimal model program type result is established (see \cite{BCHM10}), we know that there always exists a Koll\'ar component  (see \cite{Pro00} or \cite[Lemma 1]{Xu14}), but  it is often not unique (nevertheless, see the discussion in \ref{e-example}.4 for some known special cases for the uniqueness). From what we have discussed, instead of an arbitrary Koll\'ar component, we want to study those which are `the most stable', and show that they yield canonical objects if exist. Indeed, we shall prove that if there is a K-semistable Koll\'ar component, then it gives the unique minimizer of $\hvol_{(X,D),o}$ among all Koll\'ar components (actually even among all divisorial valuations).   
 
 \vskip 1mm
\vskip 1mm 
 
Compared to the global theory of degeneration of Fano varieties, this fits into the philosophy that K-stability provides a canonical degeneration (cf. \cite{LWX14, SSY14}) and it should minimize the CM weight among all degenerations.
 
 \bigskip
 The following theorem is our main result. See Definition \ref{def-hvol} for the definition of $\hvol_{(X,D),o}$.
 \begin{thm}\label{t-main1}Let $o\in (X,D)$ be a klt singularity.  
A divisorial valuation 
$\ord_S$ is a minimizer of $\hvol_{(X,D),o}$ if and only if the following conditions are satisfied
\begin{enumerate}
\item
$S$ is a Koll\'{a}r component; 
\item
$(S, \Delta_S)$ is K-semistable.
\end{enumerate}
Moreover, such a minimizing divisorial valuation, if exists, is unique among all divisorial valuations.
\end{thm}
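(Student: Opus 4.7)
My plan is to split the argument into three steps, using as the main technical tool an identity expressing, for any two Kollár components $S$ and $S'$, the difference $\hvol_{X,D,o}(\ord_{S'})-\hvol_{X,D,o}(\ord_S)$ as a positive multiple of the generalized Futaki invariant (CM weight) of a test configuration of the log Fano pair $(S,\Delta_S)$, where that test configuration is canonically produced from $\ord_{S'}$ by a Rees-type filtration on the section ring of $-S|_S$.

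For the sufficiency direction, suppose $S$ is a Kollár component and $(S,\Delta_S)$ is K-semistable. Since $-S$ is $\bQ$-Cartier and $\mu$-ample, Kawamata--Viehweg vanishing identifies the graded piece $\fa_{m-1}(\ord_S)/\fa_m(\ord_S)$ with $H^0(S,\cO_S(-(m-1)S|_S))$ for $m\gg 0$, so $\hvol_{X,D,o}(\ord_S)$ becomes the standard volume of the polarized pair $(S,-S|_S)$ rescaled by $A_{X,D}(S)^n$. Given any other Kollár component $S'$, filtering $\bigoplus_m H^0(S,\cO_S(-mS|_S))$ by $\ord_{S'}$ and passing to Rees produces a test configuration of $(S,\Delta_S)$ whose CM weight equals, up to a positive constant, $\hvol_{X,D,o}(\ord_{S'})-\hvol_{X,D,o}(\ord_S)$; K-semistability of $(S,\Delta_S)$ then forces this difference to be non-negative. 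The case of an arbitrary divisorial valuation $v$ is handled through the approximation of the infimum of $\hvol_{X,D,o}$ by Kollár components announced in the abstract.

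For the necessity direction, assume $\ord_S$ is a divisorial minimizer. I would first extract $S$ on a log resolution and run an MMP over $X$ contracting every other exceptional divisor; if the resulting model were not plt, or if $-S$ were not $\mu$-ample on it, then a small perturbation of $\ord_S$ inside the space of quasi-monomial valuations should strictly decrease $\hvol_{X,D,o}$, contradicting minimality and so forcing $S$ to be a Kollár component. K-semistability of $(S,\Delta_S)$ then follows by reading the identity from the first paragraph backwards: any test configuration of $(S,\Delta_S)$ produces via Rees a competing divisorial valuation whose $\hvol_{X,D,o}$ is at least $\hvol_{X,D,o}(\ord_S)$, so the associated CM weight is non-negative.

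For uniqueness, suppose $S_1$ and $S_2$ are both K-semistable Kollár components minimizing $\hvol_{X,D,o}$. Applying the sufficiency construction with $S=S_1$ and $S'=S_2$ shows that the induced test configuration of $(S_1,\Delta_{S_1})$ has vanishing CM weight. I expect the main obstacle to be upgrading this vanishing to $\ord_{S_1}=\ord_{S_2}$: globally K-semistability only determines an object up to S-equivalence, so the argument must exploit that the Rees degenerations live over the common germ $(X,D,o)$ rather than in a moduli of log Fanos; concretely, I would try to match the valuation ideal sequences $\fa_\bullet(\ord_{S_1})$ and $\fa_\bullet(\ord_{S_2})$ on $\cO_{X,o}$ through the common filtration, forcing the two divisorial valuations themselves to agree.
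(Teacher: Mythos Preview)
Your proposal hinges on a single identity---that for Koll\'ar components $S$ and $S'$ over $o\in (X,D)$, the difference $\hvol(\ord_{S'})-\hvol(\ord_S)$ equals a positive multiple of the CM weight of a test configuration of $(S,\Delta_S)$ built from $\ord_{S'}$---and this identity is not available. A valuation $\ord_{S'}$ on $R=\cO_{X,o}$ does not canonically induce a filtration on $\gr_{\ord_S}R=\bigoplus_m H^0(S,-mS|_S)$; you can push $\ord_{S'}$ to the graded ring by taking initial forms, but the resulting object is only a filtration after you first degenerate $X$ to the cone $C$ over $S$, and even then the relationship to $\hvol$ is through a \emph{derivative} and a convexity argument, not an equality of differences. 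Concretely, in the cone case the paper interpolates between $\ord_S$ and a rescaling of $v$ by a function $\Phi(\lambda,s)$ that is convex in $s$, identifies $\Phi_s(\lambda_*,0)$ with the expression in Fujita's valuative criterion, and deduces $\hvol(v)\ge\hvol(\ord_S)$ from $\Phi_s(\lambda_*,0)\ge 0$. There is no claim that $\hvol(v)-\hvol(\ord_S)$ itself is a Futaki invariant, and in general it is not.

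Because the identity fails, every step that rests on it collapses. For sufficiency the paper instead passes through the formula $\inf_v\hvol(v)=\inf_\fa\lct^n(\fa)\cdot\mult(\fa)$, degenerates each ideal to its initial ideal on the cone $C$ (where $\lct$ can only drop and $\mult$ is preserved), and then invokes the cone case. For the K-semistability of a minimizing Koll\'ar component the paper does \emph{not} reverse an identity but uses a two-step Gr\"obner/toric-degeneration argument: given a $\bC^*$-equivariant Koll\'ar component $F$ on the cone $X_0$, it builds a one-parameter family of Koll\'ar components $S_\epsilon$ over $X$ with $\ord_{S_\epsilon}\to\ord_S$, compares $\hvol$ along this family using convexity on the common central fibre $Y_0$, and concludes $\hvol_{X_0}(\ord_F)\ge\hvol_{X_0}(\ord_S)$. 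For the fact that a divisorial minimizer is a Koll\'ar component, your ``perturb inside quasi-monomial valuations'' sketch is not how the paper proceeds either: one first shows the graded ideal sequence $\fa_\bullet(\ord_S)$ is finitely generated (via the equality $A(\ord_S)=\lct(\fa_\bullet)$ forced by minimality), extracts $S$ by BCHM, and then compares model volumes along an MMP to a Koll\'ar component, with strict inequality unless the models agree in codimension one. Finally, uniqueness is obtained not by matching ideal sequences through a zero-CM-weight test configuration, but by a detailed equality analysis on the cone (computing nef and pseudo-effective thresholds and applying the base-point-free theorem) followed by a degeneration argument reducing the general case to the cone case.
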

 
We do not know, up to a  rescaling, whether such a valuation as in Theorem \ref{t-main1} is the unique minimizer of $\hvol_{(X,D),o}$ among {\it all} valuations in $\Val_{X,o}$ (see \cite{LX17} for further results).

\bigskip 
 
More concretely, we will prove Theorem \ref{t-main1} by establishing the following four theorems. We will need different techniques to prove each of them. 

First we prove
\begin{thmx}\label{t-main}
Let $o\in (X,D)$ be an algebraic klt singularity. Let $S$ be a Koll\'ar component over $X$.
If $(S,\Delta_S)$ is (log-)$K$-semistable. Then $\hvol_{(X,D),o}$ is minimized at the valuation $\ord_S$. 

\end{thmx}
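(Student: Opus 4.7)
The plan is to reduce to the Fano cone case handled in \cite{LL16} by degenerating $(X,D,o)$ to the orbifold cone over $(S,\Delta_S)$ via the Rees algebra of the $\ord_S$-filtration. Let $\mu:Y\to X$ be the plt blow-up extracting $S$, and set $A:=A_{X,D}(\ord_S)$. Since $-S$ is $\mu$-ample, the ideals $\fa_k:=\fa_k(\ord_S)\subset\cO_{X,o}$ satisfy $\fa_k=\mu_\ast\cO_Y(-kS)$ for $k\gg 0$, and the associated graded ring is canonically
\[
\bigoplus_{k\ge 0}\fa_k/\fa_{k+1}\;\cong\;\bigoplus_{k\ge 0}H^0\bigl(S,\cO_S(-kS|_S)\bigr).
\]
Its spectrum $(X_0,o_0)$ is the orbifold cone over $(S,\Delta_S)$ polarized by $L:=-S|_S=\tfrac{1}{A}(-(K_S+\Delta_S))$, and $D$ degenerates to a boundary $D_0$ on $X_0$. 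A direct Hilbert-Samuel computation, using adjunction to identify $(K_Y+S+\mu_\ast^{-1}D)|_S=K_S+\Delta_S=A\cdot S|_S$, gives
\[
\hvol_{X,D}(\ord_S)\;=\;A\cdot(-(K_S+\Delta_S))^{n-1}\;=\;\hvol_{X_0,D_0}(\ord_{S_0}),
\]
where $\ord_{S_0}$ denotes the canonical cone valuation on $X_0$.

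Since $(S,\Delta_S)$ is log-K-semistable by hypothesis, the log Fano cone $(X_0,D_0,o_0)$ falls within the scope of \cite{LL16} (in its log/orbifold version), so $\ord_{S_0}$ minimizes $\hvol_{X_0,D_0}$ over all real valuations centered at $o_0$. It therefore suffices to show that for every valuation $v$ on $X$ centered at $o$ with $A_{X,D}(v)<\infty$, there is a valuation $\tilde v$ on $X_0$ centered at $o_0$ satisfying
\[
A_{X_0,D_0}(\tilde v)\;\le\;A_{X,D}(v)\qquad\text{and}\qquad \vol(\tilde v)\;\le\;\vol(v);
\]
concatenating with the two previous steps then yields $\hvol_{X,D}(v)\ge\hvol_{X,D}(\ord_S)$, as required.

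The valuation $\tilde v$ is constructed by an initial-forms procedure with respect to the $\ord_S$-filtration, using the $\bG_m$-action on the Rees algebra and a suitable limit. The volume bound $\vol(\tilde v)\le\vol(v)$ reduces to the observation that the initial ideal of $\fa_m(v)$ with respect to the $\ord_S$-filtration has the same colength as $\fa_m(v)$ and is contained in $\fa_m(\tilde v)$. The main obstacle is the log-discrepancy bound $A_{X_0,D_0}(\tilde v)\le A_{X,D}(v)$, that is, lower semicontinuity of the log discrepancy under the Rees degeneration. I would first reduce to divisorial $v$ via quasi-monomial approximation by divisorial valuations, then extract $v$ on a birational model of $X$, spread it out over the $\bA^1$-family realized by the Rees algebra, and use $\bQ$-Gorenstein flatness of the family together with an inversion-of-adjunction argument on the special fiber to transfer the discrepancy bound from $X$ to $X_0$.
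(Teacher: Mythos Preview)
Your overall architecture---degenerate $(X,D,o)$ to the orbifold cone over $(S,\Delta_S)$ via the $\ord_S$-Rees algebra, match $\hvol(\ord_S)$ on both sides, and invoke minimality on the cone---is exactly the paper's strategy. The difference is in how the comparison step is executed, and there your proposal runs into two genuine problems.

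First, you degenerate \emph{valuations}: given $v$ on $X$, you want to produce $\tilde v$ on $X_0$ with $A_{X_0,D_0}(\tilde v)\le A_{X,D}(v)$ and $\vol(\tilde v)\le\vol(v)$. You correctly flag the log-discrepancy bound as the main obstacle, and your sketch (reduce to divisorial, spread out over $\bA^1$, inversion of adjunction) is plausible but not complete; in particular the ``suitable limit'' defining $\tilde v$ is not specified, and without it neither the containment $\bin(\fa_m(v))\subset\fa_m(\tilde v)$ nor the discrepancy bound is justified. The paper sidesteps this entirely by degenerating \emph{ideals} rather than valuations: for any $\fm$-primary $\fb$ one forms $\bin(\fb)$ on $X_0$, and then flatness gives $\mult(\fb)=\mult(\bin(\fb))$ while lower semicontinuity of log canonical thresholds gives $\lct(\fb)\ge\lct(\bin(\fb))$. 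Combined with the identity $\inf_v\hvol(v)=\inf_\fa\lct^n(\fa)\cdot\mult(\fa)$ (Proposition~\ref{p-inf}), this yields the comparison without ever needing semicontinuity of $A$ for valuations. This is the key technical simplification.

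Second, your appeal to \cite{LL16} for the cone case is a gap: that paper establishes minimization of $\hvol$ by the canonical valuation only under \emph{analytic} K-semistability (existence of a special degeneration to a conical KE pair), which is not known to be equivalent to algebraic K-semistability in the singular log setting. The paper addresses this explicitly: it first reduces to $\bC^*$-equivariant valuations (Lemma~\ref{l-Tmini}), and then for equivariant divisorial valuations runs the convexity/derivative argument of \cite{Li15b} together with the valuative criterion of \cite{Fuj16} to conclude directly from algebraic K-semistability (Proposition~\ref{thm-Ksemi}). You would need to do the same, or otherwise close the analytic/algebraic gap.
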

 
This extends the main theorem in \cite{LL16} from cone singularities to the more general setting. For the proof, we need to degenerate a general klt singularity to a cone singularity induced by its Koll\'ar components. However, instead of degenerating the valuation, we degenerate the associated valuative ideals. We will also use a result in \cite{Liu16} which computes the infimum of normalized volumes using some normalized multiplicities. The latter was first considered in the work of de Fernex-Ein-Musta\c{t}\u{a} \cite{dFEM04} and its behavior under degeneration of singularities can be studied as in \cite{Mus02}. 

An extra subtlety is that we can not directly use \cite{LL16} since the result there was proved for the cone singularity over a $\mathbb{Q}$-Fano variety that specially degenerates to a K\"{a}hler-Einstein $\bQ$-Fano variety. It is conjectured that any K-semistable $\bQ$-Fano variety has a such degeneration. Here we can indeed circumvent this difficulty in two different ways. In one way, we will first show that it suffices to concentrate on the torus equivariant data (see Section \ref{s-equiv}) and then use a similar argument as in \cite{Li15b} to complete the proof. In an alternative way, we solve the question proposed in \cite{LL16} and hence can use the strategy there to prove the version we need (see Proposition \ref{p-fanocone}).

In  Section \ref{s-exam}, we use this criterion to find minimizers for various examples of singularities including: quotient singularities, $A_k$ and $E_k$ singularities etc.

\bigskip

Next, we turn to the result on the uniqueness. 

\begin{thmx}\label{t-main2}
If $o\in (X,D)$ is an algebraic klt singularity. Assume $S$ is a Koll\'ar component over $X$ such that $(S,\Delta_S)$ is $K$-semistable. Then 
$$\hvol_{(X,D),o}(\ord_S)<\hvol_{(X,D),o}(\ord_{S'})$$ for any other divisorial valuation $S'$. 
\end{thmx}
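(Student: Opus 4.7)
I would suppose $\ord_T$ is a divisorial valuation with $\hvol_{X,D,o}(\ord_T)\leq \hvol_{X,D,o}(\ord_S)$ and argue that this forces $T=S$ as divisors over $X$. Note that by Theorem \ref{t-main}, $\ord_S$ is already a minimizer of $\hvol_{X,D,o}$, so any such $\ord_T$ is also a minimizer and equality must hold.

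The first step is to work on the plt Koll\'ar component model $\mu\colon Y\to X$ extracting $S$. Since the center of $\ord_T$ on $X$ is $o$ and $\mu^{-1}(o)=S$, the center of $\ord_T$ on $Y$ lies inside $S$. If this center is the generic point of $S$, then $\ord_T=c\cdot\ord_S$ for some $c>0$ and the two underlying divisors coincide; so I may assume the center is a proper subvariety of $S$, yielding, by restriction to the function field of $S$, a nontrivial valuation $w$ on the klt log Fano pair $(S,\Delta_S)$.

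Next I would interpolate between $\ord_S$ and $\ord_T$ using a one-parameter family of quasi-monomial valuations $v_t$ on a common log resolution of $(Y,S+\mu^{-1}_*D)$ where both $S$ and a model for $T$ appear. The log discrepancy $A_{X,D}(v_t)$ is affine in $t$, while $\vol(v_t)$ can be analyzed through the graded sequences of valuation ideals in the spirit of \cite{Li15a}. Since $\ord_S$ is a minimizer, the one-sided derivative $\tfrac{d}{dt}\hvol(v_t)|_{t=0^+}$ is nonnegative, and I would identify it with a Futaki-type invariant of the test configuration of $(S,\Delta_S)$ associated to $w$, which is nonnegative by K-semistability.

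The main obstacle, and the crucial step of the argument, is to upgrade the inequality $\hvol(v_t)\geq \hvol(\ord_S)$ to the \emph{strict} inequality $\hvol(\ord_T)>\hvol(\ord_S)$ using only K-semistability of $(S,\Delta_S)$. My plan is to combine two inputs: first, a strict concavity property of $t\mapsto \vol(v_t)^{-1/n}$ along the affine path, of Brunn-Minkowski/Okounkov-body type, which saturates to equality only when the associated graded algebras of the $v_t$ are all proportional; and second, the rigidity of divisorial valuations, namely that if equality held along the whole path then every intermediate quasi-monomial $v_t$ would be a minimizer, and propagating this through the adjunction to $S$ would force $w$ to be trivial, contradicting the reduction above. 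Together these will yield $T=S$ and complete the uniqueness proof.
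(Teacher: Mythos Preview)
Your plan diverges substantially from the paper's proof, and the divergence occurs precisely at the point where the real work lies. The paper does \emph{not} argue via an interpolating path of quasi-monomial valuations and convexity. Instead it proceeds in two stages: first (Section~\ref{ss-ucone}) it treats the case where $(X,D)$ is a cone over a K-semistable log Fano $(V,\Delta)$, showing by an explicit analysis of the filtration induced by a competing Koll\'ar component---computing the nef threshold of $\mu^*L-xS$, applying the base-point-free theorem, and matching degrees---that any minimizing Koll\'ar component must equal the canonical blow-up of the vertex. Second (the general case), it degenerates $(X,D)$ to the orbifold cone over $S$ via the extended Rees construction, transports a hypothetical second minimizer $F$ along this family using the equality of log canonical thresholds forced by $\hvol(\ord_F)=\hvol(\ord_S)$, and invokes the cone case plus an MMP contraction argument to conclude $F=S$.

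The genuine gap in your proposal is the step you label ``the crucial step'': the strict concavity of $t\mapsto \vol(v_t)^{-1/n}$ along the quasi-monomial segment, together with the characterization of its equality case. You invoke this as a Brunn--Minkowski/Okounkov-body input, but you give no argument for it, and it is far from formal. Establishing that the normalized volume is strictly convex along such segments, and that equality forces the endpoints to be rescalings of one another, is itself a theorem of comparable depth to the statement you are trying to prove; it was obtained only in later work and requires a careful analysis of the associated graded rings (or Newton--Okounkov bodies) that your plan does not supply. Your fallback ``rigidity'' argument---that if every $v_t$ minimized then adjunction to $S$ would force the induced valuation $w$ to be trivial---is too vague to carry weight: you would need to show that minimization of $\hvol$ by a quasi-monomial valuation propagates to a statement about the restriction $w$ on $(S,\Delta_S)$ strong enough to force triviality, and nothing in your outline does this.

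In short, your derivative computation at $t=0$ recovers the content of Proposition~\ref{thm-Ksemi} (K-semistability gives nonnegativity of the first variation), which the paper already uses, but the passage from ``$\ord_S$ is a minimizer'' to ``$\ord_S$ is the \emph{unique} divisorial minimizer'' requires either the paper's degeneration-to-cone argument or a strict convexity theorem you have not proved.
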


This is done by a detailed study of the geometry when the equality holds. In the cone singularity case, we investigate when the equality holds  in the calculation in \cite{LL16}. It posts a strong condition which enables us to compute the corresponding invariants including nef thresholds and pseudo-effective thresholds. The argument is partially inspired by the work in \cite{Liu16}. Once this is clear, the rest follows from an application of Kawamata's base point free theorem. The general case can be reduced to the case of cone singularity using a degeneration process, which heavily relies on MMP techniques.
\bigskip

Now we consider the converse direction. 
For any klt singularity, a minimizer of the normalized volume function always exists by \cite{Blu16b}. The following theorem says that if a minimizer is divisorial, it always yields a Koll\'ar component. We can indeed prove slightly more for a general rational rank one minimizer. 
\begin{thmx}\label{t-divisor}
Given an arbitrary algebraic klt singularity $o\in (X,D)$ where $X=\Spec(R)$. Let $v$ be a valuation that minimizes $\hvol_{(X,D),o}$. Assume the valuation group of $v$ is isomorphic to $\mathbb{Z}$, i.e., $v$ has rational rank one,  and one of the following two assumptions holds
\begin{enumerate}
\item $v$ is a multiple of a divisorial valuation; or
\item the graded family of valuative ideals  $$\fa_{\bullet}=\{\fa_k\} \mbox{\ where \ } \fa_k=\{f\in R\ |\ v(f)\ge k\} $$ is finitely generated, i.e., there exists $m\in \mathbb{N}$ such that $\fa_{mk}=(\fa_m)^k$ for any $k\in \mathbb{N}$.
\end{enumerate}
Then up to a rescaling, $v$ is given by the divisorial valuation induced by a Koll\'ar component $S$.
\end{thmx}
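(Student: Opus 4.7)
The plan is to use the hypotheses to extract $v$ as (a rescaling of) a divisorial valuation $\ord_E$ on a birational model $\mu : Y \to X$ satisfying every part of the Koll\'ar-component definition except possibly the plt condition, and then to force the plt condition from the minimizing property of $v$ by a derivative-of-volume argument in the style of \cite{Liu16}.

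\textbf{Step 1: Construction of the model.} In case (1), $v = c \cdot \ord_E$ for some prime divisor $E$ over $X$ and some $c > 0$. By \cite{BCHM10} I can find a proper birational morphism $\mu : Y \to X$ which is an isomorphism over $X \setminus \{o\}$, whose only $\mu$-exceptional divisor is $E$, with $Y$ being $\bQ$-factorial and $-E$ ample over $X$. In case (2), the finite generation of $\fa_\bullet$ allows me to take $Y$ to be a small $\bQ$-factorial modification of the normalization of $\mathrm{Proj}_X \bigoplus_{k \ge 0}\fa_k$; the rational rank one assumption ensures that the unique prime exceptional divisor $E$ over $o$ recovers $v$ in the sense that $\ord_E = c \cdot v$ for some $c > 0$, and by construction $-E$ is ample over $X$. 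In either case, ampleness of $-E$ over $X$ together with Serre vanishing/finite generation forces $\fa_\bullet(\ord_E) = \{ \mu_{*}\cO_Y(-kE) \}_{k\ge 0}$ to be a finitely generated graded algebra over $R$. After rescaling we may now assume $v = \ord_E$.

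\textbf{Step 2: The plt property.} I must show that $(Y, \mu_{*}^{-1}D + E)$ is plt. Suppose not; then there exists a divisor $F \neq E$ over $X$ whose center on $Y$ is contained in $E$ and for which
\[
A_{X,D}(F) \;\le\; A_{X,D}(E) \cdot \ord_F(E).
\]
Pass to a log resolution $\pi : W \to Y$ on which the birational transforms $\hE$ and $\hF$ are simple normal crossing, and consider the one-parameter family of quasi-monomial valuations $v_t$ of weights $(1,t)$ along $(\hE,\hF)$ for $t \ge 0$. Then $A_{X,D}(v_t) = A_{X,D}(E) + t\,A_{X,D}(F)$, while the differentiability results for volumes of graded linear series from \cite{BFJ14,Liu16} give a right derivative
\[
\frac{d^{+}}{dt}\vol(v_t)\Big|_{t=0} \;=\; - n\cdot c_{F},
\]
where $c_F$ is a mixed-volume-type invariant attached to the pair $(\ord_E,\ord_F)$. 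The finite generation of $\fa_\bullet(\ord_E)$ from Step 1 lets me compute $c_F$ as an integral of $\ord_F$ against the Okounkov body of $(Y, -E)$ over $X$ and bound it strictly above in terms of $\ord_F(E) \cdot \vol(\ord_E)$. Combined with the linearity of $A_{X,D}(v_t)$ in $t$ and the displayed inequality on $A_{X,D}(F)$, the first-order expansion of $\hvol(v_t) = A_{X,D}(v_t)^n \cdot \vol(v_t)$ at $t = 0$ has strictly negative coefficient, contradicting the minimality of $v = v_0$ for $\hvol_{X,D,o}$.

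\textbf{Step 3: Conclusion.} Thus $(Y, \mu_{*}^{-1}D + E)$ is plt, $-E$ is $\bQ$-Cartier and ample over $X$, so $E$ is a Koll\'ar component; by the initial rescaling $v$ is a positive multiple of $\ord_E$, as required. The main obstacle I foresee is the sharp upper bound on the mixed-volume term $c_F$ in Step 2: since the failure-of-plt hypothesis only supplies the non-strict inequality $A_{X,D}(F) \le A_{X,D}(E)\cdot \ord_F(E)$, the estimate on $c_F$ must be strict enough that the derivative of $\hvol$ at $t = 0$ becomes strictly negative rather than merely non-positive. The rational rank one assumption, which guarantees a sufficiently rigid associated graded ring of $\ord_E$, is precisely what I expect to make this Okounkov-body estimate go through.
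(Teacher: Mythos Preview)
Your approach differs substantially from the paper's, and there are two gaps worth naming.

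\textbf{Step 1, Case (1).} You invoke \cite{BCHM10} to extract an arbitrary prescribed divisor $E$ with $-E$ ample over $X$. This is not automatic: the BCHM extraction requires $a(E;X,D+\Delta)<0$ for some auxiliary klt boundary $\Delta\ge D$, and for a general divisorial valuation over a klt point no such $\Delta$ exists. The paper fills this gap in Lemma~\ref{l-lcplace} by \emph{using the minimizing hypothesis}: since $\ord_E$ minimizes $\hvol$, one gets $\lct(X,D;\fa_\bullet)=A_{X,D}(\ord_E)$, hence $a(E;X,D+(1-\epsilon)c\cdot\fa_m)<0$ for suitable $m,\epsilon$, and then BCHM applies. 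You have not invoked the minimizing property at all in Step~1, so as written the extraction is unjustified.

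\textbf{Step 2.} This is the more serious gap. Your derivative argument along the quasi-monomial ray $v_t$ is only a sketch, and you yourself identify the obstruction: if $(Y,\mu_*^{-1}D+E)$ fails to be plt, the offending divisor $F$ may satisfy $A_{X,D}(F)=A_{X,D}(E)\cdot\ord_F(E)$ with equality (indeed, once Step~1 is done correctly the pair is automatically lc, so equality is exactly what happens). Then the linear term in $\hvol(v_t)$ at $t=0$ is at best zero, and your claimed strict upper bound on the mixed-volume term $c_F$ is asserted but not proved. The Okounkov-body argument you gesture at is genuinely delicate, and ``rational rank one'' alone does not obviously deliver the strictness you need.

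The paper avoids this entirely. After reducing Case~(1) to Case~(2) via Lemma~\ref{l-lcplace}, it observes that finite generation forces $\hvol(v)=\lct^n(X,D;\fa_k)\cdot\mult(\fa_k)$ for divisible $k$, takes the \emph{dlt modification} $\mu:Y\to X$ of $(X,D+c\cdot\fa_k)$ with $-(K_Y+\mu_*^{-1}D+E)$ nef (Lemma~\ref{l-gooddlt}), and then runs an MMP as in Proposition~\ref{p-special} to produce a Koll\'ar component $S$ on a model $Y'$. The key inequality $\vol_{X,o}(Y')<\vol_{X,o}(Y)$ when $Y\dashrightarrow Y'$ contracts a divisor is obtained not by differentiating along valuations, but by an intersection-theoretic computation on a common resolution: writing $p^*(K_Y+E+\mu_*^{-1}D)=q^*(K_{Y'}+S+\mu'^{-1}_*D)+G$ with $G\ge 0$ by negativity, one interpolates $L_\lambda=q^*(A_{X,D}(S)S)+\lambda G$ and checks that the derivative of the model volume at $\lambda=1$ is strictly positive whenever some component of $E$ is contracted (this is the local analogue of \cite[Proposition~5]{LX14}). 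This yields the strict inequality directly and cleanly, with no Okounkov-body estimate required.
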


The above theorem 
is also independently proved in \cite{Blu16b} by a different argument. 
We note that a minimizer is conjectured to be quasi-monomial and  the associated graded ring for a minimizer of the normalized volume function is conjectured to be always finitely generated (cf. \cite[Conjecture 7.1]{Li15a}). So granted these conjectures, the above result should presumably characterize all the cases with minimizers of rational rank 1. 
After giving the definition of the volume of a model, the proof uses similar MMP arguments to give a process decreasing the volumes as in \cite{LX14}.

\bigskip

Next we turn to the stability of the minimizer. By using the techniques from the toric degeneration (see \cite{Cal02, AB04, And13}) and the relation between the CM weight and normalized volume, we 
will prove
\begin{thmx}\label{t-mintok}
We use the same notation as in Theorem \ref{t-divisor}. Let $\mu\colon Y\to X$ be the morphism which extracts $S$, and write $(K_Y+S+\mu_*^{-1}D)|_S=K_S+\Delta_S$, then $(S,\Delta_S)$ is a K-semistable log Fano pair. 
\end{thmx}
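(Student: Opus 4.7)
The plan is to show that for an arbitrary special test configuration $(\cS,\Delta_{\cS})\to\bA^1$ of $(S,\Delta_S)$, the generalized Futaki invariant $\CM(\cS,\Delta_{\cS})$ is non-negative. The mechanism, already exploited in \cite{Li15a,Li15b,LL16} for cone singularities, is to realize the test configuration as a one-parameter tangent direction in the space of valuations centered at $o$ emanating from $\ord_S$, and to identify the derivative of $\hvol_{X,D,o}$ along this direction with a positive multiple of the CM weight. Minimality of $\ord_S$ then forces this derivative, and hence the CM weight, to be non-negative.

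The key construction exploits the Koll\'ar-component identification
\[
\gr_{\ord_S}R\;=\;\bigoplus_k \fa_k/\fa_{k+1}\;\cong\;\bigoplus_k H^0\bigl(S,\cO_S(-kS|_S)\bigr),
\]
where $\fa_k=\{f\in R\mid\ord_S(f)\ge k\}$; the right-hand side is, up to a rescaling determined by adjunction, the anticanonical section ring of $(S,\Delta_S)$. A special test configuration of $(S,\Delta_S)$ is equivalent to a $\bG_m$-equivariant filtration $\cF^{\bullet}$ on this graded ring, which lifts (via the initial-term map $f\mapsto\rin_{\ord_S}(f)$) to a rank-two refinement of $\ord_S$ on $R$. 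For small $t\ge 0$, set
\[
v_t(f)\;:=\;\ord_S(f)\;+\;t\cdot\cF\bigl(\rin_{\ord_S}(f)\bigr),
\]
which is a valuation on $R$ for each $t\ge 0$ by the standard quasi-monomial construction, with $v_0=\ord_S$. This family of valuations encodes the tangent direction determined by the test configuration.

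The crucial computation is the derivative identity
\[
\left.\frac{d}{dt}\right|_{t=0^+}\hvol_{X,D,o}(v_t)\;=\;c\cdot\CM(\cS,\Delta_{\cS})
\]
for a universal positive constant $c$ depending only on $\dim X$ and $\vol(\ord_S)$. To establish it, I would use the toric-degeneration and Newton--Okounkov-body machinery of \cite{Cal02,AB04,And13}: the graded sequences $\fa_{t,m}=\{f\mid v_t(f)\ge m\}$ degenerate simultaneously to monomial linear series indexed by a common Okounkov body $\Delta\subset\bR^n$, and $\vol(v_t)$ is expressed as the volume of the sublevel set $\{\xi\in\Delta\mid \langle \ell_t,\xi\rangle\le 1\}$ for an explicit family of linear functionals $\ell_t$. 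Differentiating in $t$ and combining with the linear variation of the log discrepancy $A(v_t)$ produces a Duistermaat--Heckman-type integral over the central fiber of the test configuration, which is precisely a positive multiple of $\CM(\cS,\Delta_{\cS})$. Since $\ord_S$ minimizes $\hvol$, the left-hand side is $\ge 0$, so $\CM(\cS,\Delta_{\cS})\ge 0$, which gives K-semistability of $(S,\Delta_S)$.

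The main obstacle is the derivative identity of the third paragraph. One needs uniform control on the graded sequences $\fa_{t,m}$ as $t\to 0^+$ to legitimately differentiate $\hvol(v_t)$, and one must correctly track normalizations so that the resulting slope matches the Donaldson--Futaki invariant rather than some ancillary quantity; this is where the Okounkov-body slicing picture is essential. A secondary but non-trivial point is to justify rigorously that $v_t$ is a genuine valuation on $R$ for each small $t\ge 0$ and that its center remains at $o$, which follows from the quasi-monomial-valuation construction on the Rees algebra of $\cF$ over $\gr_{\ord_S}R$, but must be reconciled with the local setting on $(X,o)$.
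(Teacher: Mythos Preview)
Your strategy shares the core idea with the paper: exploit a rank-two structure combining $\ord_S$ with the test-configuration direction, and use minimality of $\ord_S$ to force non-negativity of the Futaki invariant. The execution, however, differs in an essential way, and your version has a gap precisely at the point you flag as the ``main obstacle.''

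The paper does not attempt to differentiate $\hvol_X(v_t)$ on $X$ directly. Instead it first reduces (via Proposition~\ref{thm-Ksemi} and Proposition~\ref{p-cover}) to showing that the canonical valuation minimizes $\hvol$ on the degenerated cone $(X_0,D_0)=(C,C_D)$, and then (via Lemma~\ref{l-Tmini}) to comparing $\ord_S$ against $\bC^*$-invariant Koll\'ar components $F$ on $X_0$. For each such $F$ the paper forms the $\bZ^2$-valued order $w(f)=(\ord_S(f),\ord_F(\bin f))$ and uses the Gr\"obner-basis/toric-degeneration argument of \cite{Cal02,AB04,And13} to collapse the two-step degeneration $X\rightsquigarrow X_0\rightsquigarrow Y_0$ into a one-parameter family: for each small rational $\epsilon>0$ one obtains a genuine \emph{Koll\'ar component} $S_\epsilon$ over $(X,o)$ with $\gr_{\ord_{S_\epsilon}}R\cong \gr_w R$. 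The key equalities
\[
\hvol_{X}(\ord_{S_\epsilon})=\hvol_{X_0}(\ord_{S_\epsilon})=\hvol_{Y_0}(\ord_{S_\epsilon})
\]
(all three share the same associated graded ring) transport the minimality of $\ord_S$ from $X$ to $Y_0$, where one finishes by invoking \emph{convexity} of $\hvol$ on the two-dimensional Reeb cone rather than a derivative identity.

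Your derivative identity $\left.\tfrac{d}{dt}\right|_{t=0^+}\hvol_X(v_t)=c\cdot\CM$ is asserted directly on $X$, but the Okounkov-body computation you invoke lives on $\gr_{\ord_S}R$, i.e.\ on $X_0$, not on $R$. Without first establishing that $\vol_X(v_t)=\vol_{X_0}(w_t)$ for the corresponding valuations---which is exactly the content of the equality $\hvol_X(\ord_{S_\epsilon})=\hvol_{Y_0}(\ord_{S_\epsilon})$ above---you cannot carry out the derivative computation on $X$. Filling this gap essentially forces you through the paper's two-step degeneration. The paper's use of rational $\epsilon$ and honest Koll\'ar components $S_\epsilon$ also sidesteps the regularity issues you mention (differentiability of $\hvol$, validity of $v_t$ as a valuation for all small $t$) by replacing the one-sided derivative with a convexity argument on a finite-dimensional cone.
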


\subsection{Approximation}

In a slightly different direction,  we also obtain results which describe the minimizer of the normalized  volume function from the viewpoint of Koll\'ar components. We show that 
for a general klt singularity, although the minimizer of its associated normalized volume function might not be given by one Koll\'ar component, we can always approximate it by a sequence. 
\begin{thm}\label{t-approx}
Given an arbitrary algebraic klt singularity $o\in (X,D)$, and a minimizer $v^{\rm m}$ of $\hvol_{(X,D),o}$, there always exists a sequence of Koll\'ar components $\{ S_j\}$ and positive numbers $c_j$ such that 
$$\lim_{j\to \infty}c_j\cdot \ord_{S_j}\to v^{\rm m}\mbox{\ in }\Val_{X,o} \mbox{\ \ \ and \ \ \ } \lim_{j\to \infty} \hvol(\ord_{S_j})=\hvol(v^{\rm m}).$$
\end{thm}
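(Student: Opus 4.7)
The plan is to construct the approximating sequence of Koll\'ar components by feeding the valuative ideals of $v^{\rm m}$ into the plt blow-up construction of \cite{Xu14}, and to use Liu's variational characterization of the normalized volume as a bridge.

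Let $\fa_k := \{f \in R \mid v^{\rm m}(f) \ge k\}$, a graded sequence of $\fm$-primary ideals. By Liu's theorem \cite{Liu16}, $\hvol(v^{\rm m}) = \inf_\fa \lct(X,D;\fa)^n\mult(\fa)$ with $\fa$ ranging over $\fm$-primary ideals, and this infimum is approached by the $\{\fa_k\}$ attached to any minimizer, so $\lct(\fa_k)^n\mult(\fa_k) \to \hvol(v^{\rm m})$. For each $k$, apply the MMP-based construction from \cite[Lemma 1]{Xu14} to the lc pair $(X, D + \lct(X,D;\fa_k)\cdot\fa_k)$ to extract a Koll\'ar component $S_k$ computing this lc threshold:
$$ A_{X,D}(\ord_{S_k}) \;=\; \lct(X,D;\fa_k) \cdot \ord_{S_k}(\fa_k). $$

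Combining this with the standard bound $\vol(\ord_{S_k}) \le \mult(\fa_k)/\ord_{S_k}(\fa_k)^n$ (coming from $(\fa_k)^j \subseteq \fa_{j\,\ord_{S_k}(\fa_k)}(\ord_{S_k})$ and passing to asymptotic colengths) yields
$$ \hvol(\ord_{S_k}) \;\le\; \lct(\fa_k)^n\mult(\fa_k) \;\longrightarrow\; \hvol(v^{\rm m}), $$
while the reverse inequality is automatic from the minimizing property of $v^{\rm m}$, proving the $\hvol$-convergence. For the pointwise convergence in $\Val_{X,o}$, we normalize $c_k := k/\ord_{S_k}(\fa_k)$ so that $c_k\ord_{S_k}(\fa_k) = k = v^{\rm m}(\fa_k)$. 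An Izumi-type bound for Koll\'ar components, combined with the uniform $\hvol$-bound just obtained, gives uniform comparability of the $c_k\ord_{S_k}$ with $\ord_{\fm}$, so by weak compactness one extracts a subsequential limit $\tilde v \in \Val_{X,o}$. Graded-family containments force $\tilde v(f) \ge v^{\rm m}(f)$ for every $f \in R$, while lower semi-continuity of $\hvol$ (as in \cite{Li15a}) gives $\hvol(\tilde v) \le \hvol(v^{\rm m})$, so $\tilde v$ is itself a minimizer dominating $v^{\rm m}$; a direct volume-strictness argument then forces $\tilde v = v^{\rm m}$.

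The main obstacle will be the final identification $\tilde v = v^{\rm m}$. The paper only establishes uniqueness of the minimizer among \emph{divisorial} valuations (Theorem \ref{t-main2}), so a general uniqueness statement is unavailable; the argument must instead leverage the pointwise inequality $\tilde v \ge v^{\rm m}$ coming from the graded family together with the $\hvol$-equality, ruling out any slack through a strict volume comparison. A second technical point is the uniform Izumi-type control of $A(\ord_{S_k})$ in terms of $\ord_{S_k}(\fm)$, which is what guarantees that the $c_k\ord_{S_k}$ do not escape to infinity in $\Val_{X,o}$ and that subsequential limits exist.
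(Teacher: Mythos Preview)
Your overall strategy matches the paper's: feed the valuative ideals $\fa_k$ of the minimizer into a dlt-modification/MMP construction to produce Koll\'ar components $S_k$ computing $\lct(\fa_k)$, deduce $\hvol(\ord_{S_k})\le \lct(\fa_k)^n\mult(\fa_k)\to\hvol(v^{\rm m})$, normalize, extract a subsequential limit $\tilde v$ via Izumi-type bounds and compactness, show $\tilde v\ge v^{\rm m}$ pointwise, and then identify $\tilde v=v^{\rm m}$ via the volume-strictness of Proposition~\ref{p-valuation}. Your direct inequality $\hvol(\ord_{S_k})\le \lct(\fa_k)^n\mult(\fa_k)$ from $A(S_k)=\lct(\fa_k)\cdot\ord_{S_k}(\fa_k)$ and $\vol(\ord_{S_k})\le \mult(\fa_k)/\ord_{S_k}(\fa_k)^n$ is a legitimate shortcut around the paper's ``volume of models'' machinery (Lemmas~\ref{l-model1}, \ref{l-kollar}); the paper uses that machinery because it is reused elsewhere, but for this theorem your route is cleaner.

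There is, however, one genuine gap. You invoke ``lower semi-continuity of $\hvol$ (as in \cite{Li15a})'' to get $\hvol(\tilde v)\le\hvol(v^{\rm m})$. No such statement is available: $A_{X,D}$ is lower semicontinuous and $\vol$ is only upper semicontinuous (Proposition~\ref{p-upperconti}), so $\hvol=A^n\cdot\vol$ has no semicontinuity in either direction in general. The paper avoids this by arguing the two factors separately. First, with your normalization $c_k=k/\ord_{S_k}(\fa_k)$ one has $A_{X,D}(c_k\ord_{S_k})=c_k\cdot A_{X,D}(S_k)=k\cdot\lct(\fa_k)\le A_{X,D}(v^{\rm m})$ (since $v^{\rm m}(\fa_k)\ge k$), and lower semicontinuity of $A_{X,D}$ from \cite{JM12} then gives $A_{X,D}(\tilde v)\le A_{X,D}(v^{\rm m})$. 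Second, the pointwise inequality $\tilde v\ge v^{\rm m}$ that you already establish gives $\vol(\tilde v)\le\vol(v^{\rm m})$ directly. Multiplying yields $\hvol(\tilde v)\le\hvol(v^{\rm m})$; minimality forces equality in both factors, and then Proposition~\ref{p-valuation} gives $\tilde v=v^{\rm m}$. So your conclusion is right, but the route through ``$\hvol$ lsc'' must be replaced by this two-step argument.

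Two smaller remarks. The reference to \cite[Lemma~1]{Xu14} is not quite enough: you need the refinement (Proposition~\ref{p-special} here) that from a dlt modification of $(X,D+\lct(\fa_k)\cdot\fa_k)$ one can run MMP to a Koll\'ar component $S_k$ which is an lc place of that pair, hence actually computes $\lct(\fa_k)$; bare existence of some Koll\'ar component does not give this. And for the compactness step, ``weak compactness'' in $\Val_{X,o}$ is not automatic for sequences; the paper isolates a sequential-compactness statement (Proposition~\ref{p-sequcom}) for sets of the form $\{v(\fm)\ge a,\ A(v)\le b\}$, and you should make explicit that your Izumi bound places the $c_k\ord_{S_k}$ in such a set.
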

Here $\Val_{X,o}$ consists of all valuations centered at $o$, and is endowed with the weakest topology as in \cite[Section 4.1]{JM12}. See Remark \ref{r-link} for some discussions. 


\subsection{Equivariant K-semistability}
By relating a Fano variety to the cone over it, we can compare the calculation in \cite{Li15b} for a cone and \cite{Fuj16} for its base.  Then an interesting by-product of our method is the following theorem.
\begin{thmx}\label{t-equiK}
Let $T\cong (\mathbb{C}^*)^r$ be a torus. Let $(V,\Delta) $ be a log Fano variety with a $T$-action. Then $(V,\Delta)$ is K-semistable if and only if any $T$-equivariant special test configuration $(\mathcal{V}, \Delta^{\rm tc})\to \mathbb{A}^1$ of $(V,\Delta)$ has nonnegative generalized Futaki invariant: ${\rm Fut}(\mathcal{V}, \Delta^{\rm tc})\ge 0$. 
 \end{thmx}
 
 When $V$ is smooth and $\Delta=0$, this follows from the work of \cite{DS16} with an analytic argument. Our proof is completely algebraic. It again uses the techniques of degenerating any ideal to an equivariant one and showing that it has a smaller invariant. 

\bigskip

The paper is organized in the following way: In Section \ref{s-pre}, we give some necessary backgrounds.  In Section \ref{s-vmodel}, we introduce one key new tool: the volume of a model. By combining the normalized volume function on valuations with the local volume defined in \cite{Ful13}, and applying the MMP, we prove Theorem \ref{t-approx} and Theorem \ref{t-divisor}. In Section \ref{s-min}, we prove Theorem \ref{t-main}, by connecting it to the infimum of the normalized multiplicities $\lct(X,D;\fa)^n \cdot \mult(\fa)$ for all $\fm$-primary ideals $\fa$ centered on $o$. We note that this latter invariant indeed has also been studied in other context (cf. \cite{dFEM04}).  In Section \ref{s-uni}, we prove Theorem \ref{t-main2}. We first prove it for the cone singularity case, with the help of calculations from \cite{LL16}. Then we use a degeneration argument 
to reduce the general case to the case of cone singularities. In Section \ref{s-Ksta}, we prove Theorem \ref{t-mintok}, which verifies the K-semistability of a minimizing Koll\'ar component.  In Section \ref{s-exam}, we give some examples on how to apply our techniques to calculate the minimizer for various classes of klt singularities. 
 \vspace{5mm}
 
 \noindent {\it History:} Since \cite{Li15a}, there have been several papers related to the study of the minimization of the normalized volume function (see \cite{Li15b, LL16, Liu16, Blu16b, LX17, LWX18, BL18}).  In particular, after we posted the first version of our preprint, the existence of the minimizer is completely settled in \cite{Blu16b}. In the revision, we include his result in the exposition. We also get a complete characterization of K-semistability of $\bQ$-Fano varieties using the normalized volume, improving previous results from \cite{Li15b, LL16}.  Another major improvement in this revision is that we can indeed show Theorem \ref{t-mintok} that any Koll\'ar component which minimizes the normalized local volume is always K-semistable. 
  \vspace{5mm}
 
\noindent
{\bf Acknowledgement}: We thank Yuchen Liu, Dhruv Ranganathan and Xiaowei Wang for helpful discussions and many useful suggestions. We especially  want to thank Harold Blum and Mircea Musta\c{t}\v{a} for pointing out a gap in an earlier draft.  We also want to thank the referees for many valuable suggestions to improve the exposition. CL is partially supported by NSF DMS-1405936 and 
an Alfred P. Sloan research fellowship. CX is partially supported by `The National Science Fund for Distinguished Young Scholars (11425101)'. Part of the work was done when CX visited Imperial College London and Massachusetts Institute of Technology. He wants to thank Paolo Cascini and Davesh Maulik for the invitation and providing a wonderful environment.  The authors are grateful to referees for careful reading and suggestions on improving the paper.

\section{Preliminary}\label{s-pre}

\noindent{\bf Notation and Conventiones:} We follow the standard notation in  \cite{Laz, KM98, Kol13}. A {\it log Fano} pair $(V,\Delta)$ is a projective klt pair such that $-K_V-\Delta$ is ample.  

For a local ring $(R,\fm)$ and $\fa$ an $\fm$-primary ideal, we denote by $l_R(R/\fa)$ the length of $R/\fa$. 

For a variety $\bullet$, we sometimes denote the product $\bullet \times \mathbb{A}^1$ by $\bullet_{\mathbb{A}^1}$.

We will use interchangeably the notations $\bA^1$ with $\bC$, and $\bG_m$ with $\bC^*$.

\subsection{K-semistability}\label{ss-tc}
In this section, we give the definition of  K-semistability of a log Fano pair following \cite{Tia97, Don02} (also see \cite{Oda13,LX14}).

First we need to define the notion of {\it test configuration}.
\begin{defn}
 Let $(V,\Delta)$ be an $(n-1)$-dimensional log Fano pair. A ($\bQ$-)test configuration of $(V,\Delta)$ consists of
\begin{enumerate}
\item[ $\cdot$]
a pair $(\mathcal{V}, \Delta^{\rm tc})$ with a $\mathbb{G}_m$-action,
\item[ $\cdot$]
a $\mathbb{G}_m$-equivariant ample  $\mathbb{Q}$-line bundle $\mathcal{L}\rightarrow \mathcal{V}$,
\item[ $\cdot$]
a flat $\mathbb{G}_m$-equivariant map $\pi: \mathcal{V}\rightarrow \mathbb{A}^1$, where $\mathbb{G}_m$
acts on $\mathbb{A}^1$ by multiplication in the standard way
$(t,a)\to ta$ 
\end{enumerate}
such that for any $t\neq 0$, the restriction of  $(\mathcal{V}, \Delta^{\rm tc}, \mathcal{L})$ over $t$ is isomorphic to $(V, \Delta, -(K_V+\Delta))$, and $\Delta^{\rm tc}$ does not have any vertical component, i.e., components of $\Delta^{\rm tc}$ are the closures of components of $\Delta$ under the $\mathbb{G}_m$-action.

A test configuration $(\cV, \Delta^{\rm tc}, \cL)$ is called  special  if the central fibre $(V_0, \Delta_0)$ is a log Fano variety with klt singularities and $\cL\sim_{\bQ} -(K_{\cV}+\Delta^{\rm tc})$.
\end{defn}
By \cite{LX14}, without the loss of generality we will always assume that the test configuration is normal.  
Let $(\cV, \Delta^{\rm tc},\cL)$ be a test configuration of $(V, \Delta)$. Let $(\bar{\cV}, \bar{\Delta}, \bar{\cL})\rightarrow \bP^1$ be the natural compactification of $(\cV, \Delta^{\rm tc}, \cL)\rightarrow \bA^1$ by adding a trivial fibre $(V, \Delta, L)$ over $\{\infty\}\in \bP^1$. We call it a {\it compactified} test configuration. 
Then we can define the {\it  generalized Futaki invariant}.:
\begin{defn}
With the above notations, for any normal test configuration $(\cV, \Delta^{\rm tc}, \cL)$, we define its {\it generalized Futaki invariant} to be:
\begin{equation}
{\rm Fut}(\cV, \Delta^{\rm tc}, \mathcal{L})\left(={\rm Fut}(\bar{\cV}, \bar{\Delta}, \bar{\cL})\right)=\frac{1}{n (-K_V-\Delta)^{n-1}}((n-1) \bar{\cL}^n+n \bar{\cL}^{n-1}\cdot K_{\bar{\cV}/\bP^1}).
\end{equation}
In particular, for any special test configuration, we have:
\begin{equation}
{\rm Fut}(\cV, \Delta^{\rm tc}, \mathcal{L})=\frac{(-K_{\bar{\cV}/\bP^1}-\bar{\Delta})^{n}}{n (-K_V-\Delta)^{n-1}}.
\end{equation}
\end{defn}
The above definition of the generalized Futaki invariant using the intersection formula  is well known to be equivalent to the original one using the Riemann-Roch formula (cf. \cite{Wang12, Oda13, LX14} etc.).

\begin{defn}
A log Fano pair $(V,\Delta)$ is K-semistable if for any test configuration $(\mathcal{V}, \Delta^{\rm tc}, \mathcal{L})$ of $(V,\Delta)$, we have
$${\rm Fut}(\mathcal{V}, \Delta^{\rm tc}, \mathcal{L})\ge 0.$$
\end{defn}

\subsection{Normalized volume}

Let $(X,o)$ be a normal algebraic singularity and $D\ge 0$ be a $\mathbb{Q}$-divisor such that $K_X+D$ is $\mathbb{Q}$-Cartier. Denote by $\Val_{X,o}$ the space of real valuations centered on $o$. For any $v\in \Val_{X,o}$, we can define the volume $\vol_{X,o}(v)$ following \cite{ELS03} and the log discrepancy $A_{(X,D)}(v)$ following \cite{JM12, BFFU15} (if the context is clear, we will abbreviate it as $\vol(v)$ and $A(v)$).  In particular, if $S$ is a divisor over $X$, we have 
$$A_{(X,D)}(S):=A_{(X,D)}(\ord_S)=a(S; X,D)+1$$ 
which is the same as the standard log discrepancy. 

\begin{defn}\label{def-hvol}
Notation as above. We define the {\bf normalized volume}, denoted by $\hvol_{(X,D),o}(v)$ (or by $\hvol_{(X,D)}(v)$ if $o$ is clear or simply by $\hvol(v)$ if there is no confusion), to be 
$$\vol_{X,o}(v)\cdot A_{(X,D)}^n(v)$$ 
if $A_{(X,D)}(v)<+\infty$; and  $+\infty$  if $A_{(X,D)}(v)=+\infty$. 
We define the {volume} of a klt singularity $o\in (X,D)$ to be
$$\vol(o, X,D)=\inf_{v\in \Val_{X,o} } \hvol_{X,D}(v). $$ 
\end{defn}

\begin{rem}\label{r-link}
\begin{enumerate}
\item
The space $\Val_{X,o}$ is called the `non-archimedean link' of $o\in X$ in some literature (see \cite{Thu07, Fan14}). It was well known that in the topological setting the Euclidean link captures a lot of (including all the topological) information of a singularity. We expect that the study of $\Val_{X,o}$ will also significantly improve our knowledge of the singularity.

One can try to investigate the normalized volume function more globally. For instance, it is interesting to ask on a fixed model, how the function $\vol(o, X,D)$ changes when we vary $o$, including the case that $o$ is not a closed point. In particular, we expect that there is a formula to connect the volume of a (not necessarily closed) point $o$ and the volume of a general point $o'$ on the closure $\overline{\{o\}}$. We note that this may give us a way to treat those valuations with centers containing the fixed point. It is also natural to ask how $\vol(o, X,D)$ changes when we modify the birational models. We hope to explore these interesting questions in the future. 
\item
The volume of klt singularities defined here is different with the volume of singularities defined in \cite{BdFF12} (see also \cite{Zha14}). The volume in \cite{BdFF12} is defined using envelops of log discrepancy $b$-divisors and vanishes for klt singularities. Intuitively, while \cite{BdFF12} computes the volume of log canonical classes, our definition of volume of klt singularities is for the anti-log-canonical classes.  

\end{enumerate}
\end{rem}
 
 In \cite{Li15a}, it was shown that the space
$$\{v\in \Val_{X,o}|\ v(\fm)=1, \hvol(v) \le C \}$$
for any constant $C>0$ forms a compact set in a weak topology. However, in general the volume function $\vol$ is only upper semicontinuous on $\Val_{X,o}$. 

\begin{prop}\label{p-upperconti}
If $\{v_i\}$ is a sequence of valuations, such that $v_i\to v$ in the weak topology, then 
$$\vol(v)\ge \limsup_i \vol(v_i). $$ 
\end{prop}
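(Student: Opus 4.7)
The plan is to bound $\vol(v_i)$ uniformly from above by a finite-level invariant of the limit $v$. Although $\vol(w)$ is defined as a limit over $k$, each valuation ideal $\fa_k(w)$ already yields an upper bound for $\vol(w)$: from the inclusion $\fa_k(w)^m\subseteq \fa_{km}(w)$ together with the definition of the Hilbert--Samuel multiplicity $e(\fa_k(w))=\lim_m n!\,l_R(R/\fa_k(w)^m)/m^n$, one deduces $\vol(w)\le e(\fa_k(w))/k^n$, and Lech's inequality $e(\fa)\le n!\,l_R(R/\fa)$ for $\fm$-primary $\fa$ then gives the uniform bound
$$\vol(w)\le \frac{n!\,l_R(R/\fa_k(w))}{k^n}\qquad\text{for every } w\in\Val_{X,o} \text{ and every integer } k\ge 1.$$
I would apply this in particular to each $v_i$.

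Next I would compare the ideals $\fa_k(v_i)$ with an ideal attached to $v$, using the weak convergence. Since $R=\cO_{X,o}$ is Noetherian, $\fa_{k+1}(v)$ is generated by finitely many elements $f_1,\dots,f_m$, each satisfying $v(f_j)\ge k+1$. The definition of the weakest topology is precisely that $v_i(f)\to v(f)$ for every $f\in R$, so for $i$ sufficiently large (depending on $k$) one has $v_i(f_j)\ge k$ for each $j$, and therefore
$$\fa_{k+1}(v)\subseteq\fa_k(v_i)\qquad \text{for all } i\gg 0.$$
The key subtlety is to compare $\fa_{k+1}(v)$ rather than $\fa_k(v)$ with $\fa_k(v_i)$: a direct containment $\fa_k(v)\subseteq \fa_k(v_i)$ can fail for elements $f$ with $v(f)=k$, and the extra unit buffer combined with Noetherianness reduces infinitely many continuity conditions to finitely many.

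Combining the two steps, for $i$ sufficiently large
$$\vol(v_i)\le\frac{n!\,l_R(R/\fa_k(v_i))}{k^n}\le \frac{n!\,l_R(R/\fa_{k+1}(v))}{k^n},$$
where the second inequality follows from the surjection $R/\fa_{k+1}(v)\twoheadrightarrow R/\fa_k(v_i)$. Taking $\limsup_{i\to\infty}$ bounds $\limsup_i\vol(v_i)$ above by $n!\,l_R(R/\fa_{k+1}(v))/k^n$. Finally, letting $k\to\infty$ and rewriting the ratio as $((k+1)/k)^n\cdot n!\,l_R(R/\fa_{k+1}(v))/(k+1)^n$, the right-hand side tends to $\vol(v)$ by the very definition of the local volume, yielding $\limsup_i\vol(v_i)\le\vol(v)$.

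The main obstacle will be the finite-level upper bound $\vol(w)\le n!\,l_R(R/\fa_k(w))/k^n$ of the first paragraph: the delicate point is to upgrade the identity $\vol(w)=\lim_k n!\,l_R(R/\fa_k(w))/k^n$ into an inequality that is valid at every finite level $k$, which is exactly what allows one to interchange $\limsup_i$ with the $k$--limit. Once this is in hand, the remainder of the argument is a soft semicontinuity argument powered by Noetherianness and the definition of the weak topology.
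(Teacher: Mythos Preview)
Your overall strategy is the same as the paper's: pick a finite level $k$, use Noetherianness to reduce to finitely many generators of a valuation ideal of $v$, use weak convergence to trap these generators in a valuation ideal of $v_i$, and bound $\vol(v_i)$ by a finite-level quantity that converges to $\vol(v)$. The paper uses a $\delta$-buffer ($\fa_k(v)\subseteq \fa_{k-\delta}(v_i)$) where you use a $+1$-buffer ($\fa_{k+1}(v)\subseteq \fa_k(v_i)$), but this is cosmetic.

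There is one genuine error. The inequality you call Lech's inequality, $e(\fa)\le n!\,l_R(R/\fa)$, is false in general: Lech's inequality reads $e(\fa)\le n!\,e(\fm)\,l_R(R/\fa)$, and the factor $e(\fm)$ cannot be dropped for singular points. For instance, already for the maximal ideal $\fm$ itself one has $l_R(R/\fm)=1$ while $e(\fm)$ can be arbitrarily large (e.g.\ at the vertex of a cone over a high-degree Fano hypersurface). With the correct Lech bound you would only get $\limsup_i \vol(v_i)\le e(\fm)\cdot\vol(v)$, which is useless.

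The fix is simply to drop the Lech step and stay with multiplicities, exactly as the paper does. You already correctly established $\vol(w)\le \mult(\fa_k(w))/k^n$ from $\fa_k(w)^m\subseteq \fa_{km}(w)$. Since multiplicity is reverse monotone under inclusion of $\fm$-primary ideals, your containment $\fa_{k+1}(v)\subseteq\fa_k(v_i)$ yields $\mult(\fa_k(v_i))\le \mult(\fa_{k+1}(v))$, hence
\[
\vol(v_i)\;\le\;\frac{\mult(\fa_k(v_i))}{k^n}\;\le\;\frac{\mult(\fa_{k+1}(v))}{k^n}\;=\;\left(\frac{k+1}{k}\right)^{\!n}\frac{\mult(\fa_{k+1}(v))}{(k+1)^n},
\]
and the right-hand side tends to $\vol(v)$ by the result (of Musta\c{t}\u{a}, ELS, Lazarsfeld--Musta\c{t}\u{a}, Cutkosky) that $\vol(v)=\lim_k \mult(\fa_k(v))/k^n$. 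Your passage through colengths was unnecessary; once it is removed the argument is correct and essentially identical to the paper's.
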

\begin{proof}The valuation $v$ determines a graded sequence of ideas 
$$\fa_k=\fa_k(v)=\{f\in R\ |\  v(f)\ge k\}.$$  
By \cite{Mus02}, we know that for any $\epsilon>0$, there exists a sufficiently large $k$ such that 
$$\frac{1}{k^n}\mult(\fa_k)< \vol(v)+\epsilon.$$
Since $R$ is Noetherian, we know that there exist  finitely many generators $f_p$ ($1\le p\le j$) of $\fa_k=(f_1,...,f_j)$. As $v(f_p)\ge k$, we know that for any $\delta$, there exists sufficiently large $i_0$ such that for any $i\ge i_0$, $v_i(f_p)\ge k-\delta$. Thus 
$$\fa^{(i)}_{k-\delta}=\{f\in R\ |\  v_i(f)\ge k-\epsilon \}\supset \fa_k .$$
Therefore,
$$\vol(v_i)\le \frac{1}{(k-\delta)^n}\mult(\fa^{(i)}_{k-\epsilon})\le  \frac{1}{(k-\delta)^n}\mult(\fa_{k})\le  \frac{k^n}{(k-\delta)^n}(\vol(v)+\epsilon).$$
\end{proof}

We also have the following  result.
\begin{prop}\label{p-valuation}
Let $(X,o)=(\Spec(R),\fm)$ be a singularity. Let $v$ and $v'$ be two real valuations in $\Val_{X,o}$. Assume 
$$\vol(v)=\vol(v')>0\qquad \mbox{and}  \qquad v(h)\ge v'(h)$$ for any $h\in R$, then $v=v'$.
\end{prop}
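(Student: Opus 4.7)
The plan is to translate the hypothesis into a statement about graded families of valuative ideals, where one can use the Ein-Lazarsfeld-Smith / Musta\c{t}\u{a} / Cutkosky formula expressing $\vol$ as a limit of multiplicities. For every $k>0$, write $\fa_k=\fa_k(v)=\{f\in R:v(f)\ge k\}$ and $\fa_k'=\fa_k(v')$; the hypothesis $v(f)\ge v'(f)$ on $R^*$ is exactly the containments $\fa_k'\subseteq\fa_k$ for every $k$, giving $\mult(\fa_k)\le \mult(\fa_k')$ (shrinking the ideal can only increase its multiplicity). By the asymptotic formula $\vol(v)=\lim_{k\to\infty}\mult(\fa_k)/k^n$ (already invoked in Proposition~\ref{p-upperconti}), we obtain $\vol(v)\le \vol(v')$, and combined with the hypothesis $\vol(v)=\vol(v')>0$ this yields the key asymptotic vanishing
$$\lim_{k\to\infty}\frac{\mult(\fa_k')-\mult(\fa_k)}{k^n}=0.$$

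Next I argue by contradiction: assume there exists $f_0\in R^*$ with $b:=v(f_0)>a:=v'(f_0)>0$. Multiplicativity of both valuations gives, for every integer $m$ with $k/b\le m<k/a$, a witness $f_0^m\in\fa_k\setminus \fa_k'$. The goal is to convert this one-parameter witness family into a quantitative lower bound of the form $\mult(\fa_k')-\mult(\fa_k)\ge c\,k^n$ for some $c=c(f_0)>0$, which contradicts the vanishing above.

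The main obstacle, and the heart of the proof, is extracting this $k^n$-growth from the witnesses: the family $\{f_0^m\}$ by itself only contributes $O(k)$ linearly independent classes to $\fa_k/\fa_k'$, whereas we need $\Omega(k^n)$. To amplify, I would consider the submodules $f_0^m\cdot\fa_j$ with $(m,j)$ ranging over the ``gap region'' $mb+j\ge k>ma+j$ (so they land in $\fa_k\setminus \fa_k'$), and use the asymptotics $l_R(R/\fa_j)\sim \vol(v)\,j^n/n!$ together with the fact that valuation ideals are integrally closed to count the independent classes contributed and sum them over this positive-volume region of $(m,j)$. A more elegant alternative is to invoke the Newton-Okounkov body machinery of Kaveh-Khovanskii and Lazarsfeld-Musta\c{t}\u{a}: choose a rank-$n$ flag valuation $\nu$ whose first component refines $v'$, so that $\Delta(\fa_\bullet')\subseteq\{x_1\ge 1\}$; then equality of volumes forces $\Delta(\fa_\bullet)=\Delta(\fa_\bullet')$, yet the normalized images $\nu(f_0^m)/(mb)$ of the witnesses accumulate at a point with first coordinate $a/b<1$, lying in $\Delta(\fa_\bullet)$ but outside $\Delta(\fa_\bullet')$, the desired contradiction. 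Either route finishes the proof, but in both cases the nontrivial technical content is precisely the amplification step turning a single witness element into a contradiction with the asymptotic equality of multiplicities.
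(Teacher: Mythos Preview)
Your setup and reduction are correct, and your first approach is in the spirit of the paper's proof, but there is a concrete gap in the amplification step. You propose to multiply $f_0^m$ by elements of the $v$-ideals $\fa_j$ and claim the products land in $\fa_k\setminus\fa_k'$; however for $h\in\fa_j$ you only know $v'(h)\le v(h)$, and $v(h)$ may be much larger than $j$, so $v'(f_0^m h)=ma+v'(h)$ need not be $<k$. Even restricting to representatives of $\fa_j/\fa_{j+1}$, establishing linear independence in $\fa_k/\fa_k'$ across different pairs $(m,j)$ is not straightforward with only $v$-information on the factors. The paper fixes exactly this by multiplying $f_0^m$ instead by basis elements of the $v'$-graded pieces $\fa'_{k-ml}/\fa'_{k-ml+r}$ (with $l=b$ and $r=b-a$): since the $v'$-values of these factors are pinned to the window $[k-ml,\,k-ml+r)$, a maximal-$m$ leading-term argument in the $v'$-filtration proves linear independence modulo $\fa_k'$. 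Summing $\dim(\fa'_{k-ml}/\fa'_{k-ml+r})$ over $1\le m\le k/l$ and using $\vol(v')>0$ then gives the $\Omega(k^n)$ lower bound on $\dim(\fa_k/\fa_k')$ that you were after. So the missing idea is: use the graded pieces of the \emph{smaller} valuation $v'$, not of $v$, as the amplifying factors.

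Your Okounkov-body alternative is a genuinely different route and is morally correct, but it also skips a step: you need a rank-$n$ valuation $\nu$ whose first coordinate is $v'$ and which has one-dimensional leaves, so that the Okounkov bodies of $\fa_\bullet$ and $\fa'_\bullet$ have the correct volumes. For an arbitrary real valuation $v'$ (possibly of higher rational rank or with large residue field) this refinement requires an argument via Abhyankar's inequality and a further valuation on the residue field; it is doable but not automatic. Once that is in place, the nested-convex-bodies-of-equal-volume argument works. Compared with the paper's proof this approach is more conceptual but imports heavier machinery; the paper's argument is completely elementary, using only colength asymptotics and careful bookkeeping of $v'$-values.
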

\begin{proof} We prove it by contradiction. Assume that this is not true. We fix $f\in R$ such that 
$$v(f)=l>v'(f)=s.$$
Denote by $r= l-s>0.$ Fix $k\in \mathbb{R}_{>0}$. Consider 
$$\fa_k:=\{h\in R | \ v(h)\ge k\}\qquad\mbox{ and} \qquad\fb_k:=\{h\in R |\  v'(h)\ge k\}. $$
So by our assumption $\fb_k\subset \fa_k$, and we want to estimate the dimension of 
$$\dim(R/{\fb_k})-\dim(R/{\fa_k})=\dim (\fa_k/\fb_k). $$
Fix a positive integer $m<\frac{k}{l}$ and a set
$$g^{(1)}_{m},...,g^{(k_m)}_{m} \in \fb_{k-ml}$$
whose images in $\fb_{k-ml}/\fb_{k-ml+r}$ form a $\mathbb{C}$-linear basis.

We claim that 
$$\{f^m\cdot  g^{(j)}_{m}\}\ \ (1\le m \le \frac{k}{l}, 1\le j\le k_m)$$ 
are $\mathbb{C}$-linear independent in $\fa_k/\fb_k$. Granted this for now, 
we know that since $\vol(v)>0$ then 
$$\limsup_{k\to \infty} \frac{1}{k^n}\sum_{1\le m \le \frac{k}{l}}k_m=\limsup_{ k\to \infty} \sum_{1\le m\le \frac{k}{l}} \frac{1}{k^n} \dim (\fb_{k-ml}/\fb_{k-ml+r})>0, $$
which then implies $\vol(v)>\vol(v')$.

\bigskip

Now we prove the claim.

\noindent{\bf Step 1:} For any $1\le m \le \frac{k}{l}, 1\le j\le k_m$,
\begin{eqnarray*}
v(f^m\cdot g^{(j)}_{m})&=&v(f^m)+v(g^{(j)}_{m})\\
 &\ge& ml+v'(g^{(j)}_{m})\\
 &\ge & ml+k-ml\\
 &\ge &k.
 \end{eqnarray*}
 Thus $f^m\cdot g^{(j)}_{m}\in \fa_k$. 
 
 \vspace{3mm}
 
 \noindent{\bf Step 2:} If  $$\{f^m\cdot  g^{(j)}_{m}\}\ \ (1\le m \le \frac{k}{l}, 1\le j\le k_m)$$ are not $\bC$-linear independent in $\fa_k/\fb_k$,
 then there is an equality
 $$\sum_{m}h_m= b \in  \fb_k,$$
 where there exists $c_j\in \mathbb{C}$, such that 
 $$h_m=f^m\cdot \sum_{1\le j \le k_m} c_jg^{(j)}_{m}$$ 
  and some $h_m\neq 0$. Consider the maximal $m$, such that $h_m\neq 0$.
 Since
\begin{eqnarray*}
  v'(h_m)&=&v'(f^m\cdot \sum_{1\le j \le k_m} c_jg^{(j)}_{m}) \\
  &=& v'(f^m)+v'(\sum_{1\le j \le k_m} c_j g^{(j)}_{m})\\
&< & ms+k-ml+r\\
 &= & k-(m-1)l+(m-1)s,
 \end{eqnarray*}
 where the third  inequality follows from that 
 $$ \sum_{1\le j \le k_m} c_jg^{(j)}_{m} \notin \fb_{k-ml+r} .$$
 However, we have 
\begin{eqnarray*}
  v'(h_m)&=&v'(b-\sum_{j<m}h_j) \\
  &\ge & \min \big\{ v'(b), v'(h_1),....v'(h_{m-1}) \big\}\\
&= & \min_{1\le j \le m-1}\{ k , js+k-jl\} \\
 &=& k-(m-1)l+(m-1)s,
 \end{eqnarray*}
which is a contradiction. 
\end{proof}

Several results in our work depend on a relation between normalized volumes of valuations and normalized multiplicities of primary ideals. The latter quantity was first considered in the smooth case in \cite{dFEM04}, and since then it has been studied  in many other works, including its positive characteristic version (see e.g. \cite{TW04}). Its relevance to the normalized volume appeared in \cite[Example 5.1]{Li15a}. In \cite{Liu16} the following more precise observation is made. 
\begin{prop}[{\cite[Section 4.1]{Liu16}}]\label{p-inf}
Let $(X,o)=({\rm Spec}R, \fm)$ and $D\ge 0$ a $\mathbb{Q}$-divisor, such that $o\in (X,D)$ is a klt singularity. Then we have 
\begin{equation}\label{eq-vol2mul}
\inf_{v} \hvol_{(X,D),o}(v)=\inf_{\fa}\lct^n(X,D; \fa)\cdot\mult(\fa),
\end{equation}
where on the left hand side $v$ runs over all real valuations centered at $o$, and on the right hand side $\fa$ runs over all $\fm$-primary ideals. Moreover, 
the left hand side can be replaced by $\inf_{v\in {\rm Div}_{X,o}} \hvol_{(X,D),o}(v)$ where ${\rm Div}_{X,o}$ denotes the space of all divisorial valuations with center at $o$.
\end{prop}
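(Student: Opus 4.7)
The plan is to establish the equality in \eqref{eq-vol2mul} by proving two matching inequalities; the second direction will automatically produce a divisorial valuation realizing the infimum on the right-hand side of \eqref{eq-vol2mul}, which in turn upgrades the left infimum to be over only divisorial valuations and yields the ``moreover'' clause simultaneously.

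For $\inf_v \hvol_{X,o}(v)\geq \inf_\fa \lct(X,D;\fa)^n\cdot\mult(\fa)$, I would pick any $v\in\Val_{X,o}$ with $A_{X,D}(v)<\infty$ (otherwise $\hvol_{X,o}(v)=+\infty$ and there is nothing to prove) and attach to it its graded sequence of valuation ideals $\fa_k=\fa_k(v)=\{f\in R: v(f)\geq k\}$. Since $v(\fa_k)\geq k$, the variational description of lct gives $\lct(X,D;\fa_k)\leq A_{X,D}(v)/k$. The containment $\fa_k^m\subset\fa_{km}$ forces the sequence $\mult(\fa_k)/k^n$ to be decreasing, and its limit equals $\vol(v)$ by the asymptotic results of \cite{ELS03, Mus02} as extended to general valuations in \cite{JM12}. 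Multiplying the two estimates,
\[
\lct(X,D;\fa_k)^n\cdot\mult(\fa_k)\leq \frac{A_{X,D}(v)^n}{k^n}\cdot\mult(\fa_k)\longrightarrow A_{X,D}(v)^n\cdot\vol(v)=\hvol_{X,o}(v),
\]
which yields the desired inequality after sending $k\to\infty$.

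For the reverse direction, I would fix an $\fm$-primary ideal $\fa$ and take a log resolution of $(X,D+\fa)$, which realizes $\lct(X,D;\fa)$ as the minimum of the ratios $A_{X,D}(E_i)/\ord_{E_i}(\fa)$ over finitely many exceptional prime divisors $E_i$. Thus there is a divisorial valuation $\ord_E$ achieving $A_{X,D}(E)/\ord_E(\fa)=\lct(X,D;\fa)$. Setting $c=\ord_E(\fa)$, we have $\fa\subset\fa_c(\ord_E)$, and hence $\mult(\fa)\geq\mult(\fa_c(\ord_E))\geq c^n\vol(\ord_E)$ by the monotonicity recorded above. Therefore
\[
\lct(X,D;\fa)^n\cdot\mult(\fa)\geq \frac{A_{X,D}(E)^n}{c^n}\cdot c^n\vol(\ord_E)=\hvol_{X,o}(\ord_E) \geq \inf_{w\in\mathrm{Div}_{X,o}}\hvol_{X,o}(w),
\]
and combining with the trivial $\inf_{w\in\mathrm{Div}_{X,o}}\hvol \geq \inf_{v\in\Val_{X,o}}\hvol$ closes the sandwich.

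The main technical burden is the asymptotic identity $\lim_k \mult(\fa_k(v))/k^n = \vol(v)$ together with its monotone refinement $\mult(\fa_c(v))\geq c^n\vol(v)$: these are classical for divisorial and Abhyankar valuations, but for a general valuation they need asymptotic multiplier ideals and the subadditivity theorem from \cite{ELS03, JM12}. Once that input is imported, both inequalities reduce to the elementary estimate $\lct(\fa)\leq A(v)/v(\fa)$ combined with length-versus-multiplicity bookkeeping, and the second-paragraph argument visibly selects a divisorial minimizing sequence on the left of \eqref{eq-vol2mul}.
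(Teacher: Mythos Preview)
Your proposal is correct and follows essentially the same two-inequality strategy as the paper: pass from a valuation $v$ to its valuation ideals $\fa_k(v)$ for one direction, and from an ideal $\fa$ to a divisorial valuation computing $\lct(\fa)$ for the other. The only cosmetic difference is that for the reverse inequality the paper keeps the single ideal $\fa$ and lets a power $l\to\infty$ via $\fa^l\subset\fa_{kl}(\ord_E)$, whereas you first pass to $\fa\subset\fa_c(\ord_E)$ and then invoke the monotonicity of $\mult(\fa_k)/k^n$; these are the same computation rearranged.

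One small remark on your last paragraph: the input $\lim_k \mult(\fa_k(v))/k^n=\vol(v)$ for a general valuation does not require asymptotic multiplier ideals or subadditivity. It is a statement about graded families of $\fm$-primary ideals and follows from the graded-multiplicity results of \cite{ELS03, Mus02, LM09, Cut12} (Okounkov-body type arguments). Likewise, the ``monotone refinement'' $\mult(\fa_c)\ge c^n\vol$ you use in the second step is only applied to the divisorial valuation $\ord_E$, where it is immediate from $\fa_c^m\subset\fa_{cm}$ and the definition of volume, so no heavy machinery enters there either.
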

For the reader's convenience we provide a sketch of the proof. 

\begin{proof}
We first use the same argument as in \cite[Example 5.1]{Li15a}) to prove that the left hand side is greater than or equal to the right hand side. For any real valuation $v$, consider 
the graded family of valuative ideals 
$$\fa_k=\fa_k(v)=\{f\in R\ |\  v(f)\ge k\}.$$ 
Then $v(\fa_k)\ge k$ and we can estimate:
\begin{eqnarray*}
A_{(X,D)}(v)^n \cdot \frac{\mult(\fa_k)}{k^n}&\ge & \left(\frac{A_{(X,D)}(v)}{v(\fa_k)}\right)^n \cdot \mult(\fa_k)\ge \lct^n(X,D; \fa_k)\cdot \mult(\fa_k).
\end{eqnarray*}
Since $\fa_\bullet=\{\fa_k\}$ is a graded family of $\fm$-primary ideals on $X$, 
\[
\vol(v)=\mult(\fa_\bullet)=\lim_{k\rightarrow+\infty} \frac{l_R(R/\fa_k)}{k^n}=\lim_{k\rightarrow+\infty}\frac{\mult(\fa_k)}{k^n}.
\]
(see e.g. \cite{ELS03, Mus02, LM09, Cut12}). As $k\rightarrow +\infty$, the left hand side converges to $\hvol(v)$ and we get one direction.

For the other direction of the inequality, we follow the argument in \cite{Liu16}. For any $\fm$-primary ideal $\fa$, we can choose a divisorial valuation $v$ calculating 
$\lct(\fa)$. Then $v$ is centered at $o$. Assume $v(\fa)=k$, or equivalently $\fa\subseteq \fa_k(v)$. Then we have $\fa^l \subseteq \fa_k(v)^l\subseteq \fa_{kl}(v)$ for any $l\in \bZ_{>0}$. So we can estimate:
\begin{eqnarray*}
\lct^n(X,D; \fa)\cdot \mult(\fa)&=&\frac{A_{(X,D)}(v)^n}{k^n}\cdot \mult(\fa)=A_{(X,D)}(v)^n\cdot\frac{\mult(\fa) l^n}{(kl)^n}\\
&=&A_{(X,D)}(v)^n \cdot \frac{\mult(\fa^l)}{(kl)^n}\ge A_{(X,D)}(v)^n\cdot \frac{\mult(\fa_{kl})}{(kl)^n}.
\end{eqnarray*}
As $l\rightarrow +\infty$, then again the right hand side converges to 
$$A_{(X,D)}(v)^n \cdot \mult(\fa_\bullet(v))=\hvol(v).$$ 

The last statement follows easily from the above proof.
 \end{proof}
 
In \cite{Blu16b}, it is proved that a minimizer always exists.

\begin{thm}[\cite{Blu16b}] For any klt singularity $o\in (X,D)$, $\hvol_{(X,D)}(v)$ always has a minimizer $v^{\rm m}$ in $\Val_{X,o}$.
\end{thm}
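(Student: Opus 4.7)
The plan is to extract a minimizer from a minimizing sequence of divisorial valuations by combining the compactness recalled from \cite{Li15a} with the ideal reformulation in Proposition \ref{p-inf}.

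Set $V^* := \inf_{v \in \Val_{X,o}} \hvol_{X,D}(v)$. By the last assertion of Proposition \ref{p-inf}, this infimum is already realized over divisorial valuations, so I pick a sequence $v_i \in {\rm Div}_{X,o}$ with $\hvol(v_i) \to V^*$. Since rescaling preserves $\hvol$, I normalize $v_i(\fm)=1$. The compactness of $\{v \in \Val_{X,o} : v(\fm) = 1,\ \hvol(v) \le V^* + 1\}$ in the weak topology (from \cite{Li15a}) then yields, after passing to a subsequence, a limit $v^* \in \Val_{X,o}$ with $v_i \to v^*$ and $v^*(\fm)=1$.

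The heart of the matter is showing $\hvol(v^*) \le V^*$. A naive combination of semi-continuities fails: the log discrepancy $A_{X,D}$ is lower semi-continuous on $\Val_{X,o}$ (by \cite{JM12}), so $A_{X,D}(v^*) \le \liminf A_{X,D}(v_i)$, but the volume is only upper semi-continuous (Proposition \ref{p-upperconti}), which is the wrong direction for bounding the product $\hvol = A^n \cdot \vol$ from above. I would circumvent this by translating into ideals: for each $i$, use the proof of Proposition \ref{p-inf} to associate to $v_i$ an $\fm$-primary ideal $\fa_i$ (for instance $\fa_i = \fa_{k_i}(v_i)$ with $k_i \to \infty$ slowly) such that $v_i$ computes $\lct(\fa_i)$ and $\lct^n(\fa_i)\cdot \mult(\fa_i) \to V^*$. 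Weak convergence together with the finite-generation argument of Proposition \ref{p-upperconti} implies that for each fixed $k$ and each $\delta>0$, one has $\fa_k(v^*) \subseteq \fa_{k-\delta}(v_i)$ for all $i$ large; combining this with the inequality $A_{X,D}(v^*) \ge \lct(\fa_k(v^*)) \cdot k$ and the convergence $\mult(\fa_k(v^*))/k^n \to \vol(v^*)$, a careful diagonal extraction (letting $k\to\infty$ and $i\to\infty$ jointly) yields
\[
\hvol(v^*) \;=\; A_{X,D}(v^*)^n \cdot \vol(v^*) \;\le\; \liminf_i \lct^n(\fa_i) \cdot \mult(\fa_i) \;=\; V^*.
\]

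The main technical obstacle is precisely this diagonal extraction: the rate at which $k \to \infty$ must cooperate with the rate at which $v_i \to v^*$ in the weak topology, so that the loss from the upper semi-continuity of $\vol$ is absorbed by the gain from the lower semi-continuity of $A_{X,D}$ and the ideal-theoretic inequalities. Controlling this interplay uniformly in $i$ is the essential new input of \cite{Blu16b}, and any proof of the theorem must confront it; once it is in hand, the inclusion $\hvol(v^*) \le V^*$ forces equality and exhibits $v^* = v^{\rm m}$ as the desired minimizer.
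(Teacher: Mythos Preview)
The paper does not supply a proof of this theorem; it is stated as Blum's result and attributed to \cite{Blu16b} without argument, so there is no in-paper proof to compare against.

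Your proposal correctly isolates the central obstruction---$A_{X,D}$ is lower semicontinuous while $\vol$ is only upper semicontinuous (Proposition~\ref{p-upperconti}), so a weak limit of a minimizing sequence does not directly control $\hvol$---but it does not resolve it. The ``careful diagonal extraction'' is asserted rather than carried out, and you yourself concede that ``controlling this interplay uniformly in $i$ is the essential new input of \cite{Blu16b}.'' Concretely, the inclusion $\fa_k(v^*)\subseteq \fa_{k-\delta}(v_i)$ yields $\mult(\fa_k(v^*))\ge \mult(\fa_{k-\delta}(v_i))$, which is the wrong direction for bounding $\vol(v^*)$ from above via the $v_i$; and nothing in the sketch ties the floating ideals $\fa_i$ back to $A_{X,D}(v^*)$. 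As written this is a restatement of the difficulty, not a proof.

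For comparison, Blum's actual argument in \cite{Blu16b} does not pass to a weak limit of valuations at all. He starts from the ideal side of Proposition~\ref{p-inf}, takes a minimizing sequence of $\fm$-primary ideals, and constructs their \emph{generic limit} (in the sense of de Fernex--Musta\c{t}\u{a} and Koll\'ar) as a graded sequence over a field extension; this construction is designed so that both the multiplicity and the log canonical threshold behave well in the limit simultaneously. He then invokes \cite{JM12} to produce a valuation computing $\lct$ of that graded sequence, and shows it descends to a minimizer of $\hvol$. The generic-limit machinery is exactly what replaces your undeveloped diagonal step, and it is genuinely different from attempting to take limits in $\Val_{X,o}$ directly.
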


\subsection{Properties of Koll\'ar component}
The concept of {\it Koll\'ar component} is defined in Definition \ref{d-kollar}. It always exists by results from the MMP (cf. see \cite{Pro00} or \cite[Lemma 1]{Xu14}).  

In this section, we  establish some of their properties using the machinery of the minimal model program. The following statement is the local analogue of \cite[Theorem 1.6]{LX14}, which can be obtained by following the proof of the existence of Koll\'ar component. (See e.g. the proof of \cite{Xu14}.) 
\begin{prop}\label{p-special}Let $o\in (X,D)$ be a klt singularlty. Let $\mu\colon Z\to X$ be a model, such that $\mu$ is an isomorphism over $X\setminus\{o\}$ and $(Z,E+\mu_*^{-1}D)$ is dlt where $E$ is the divisorial part of $\mu^{-1}(o)$. 
Then we can choose a model $W\to Z$ and  run an MMP to obtain $W\dasharrow Y$, such that $Y\to X$ gives a Koll\'ar component $S$ that satisfies $a(S; Z,E)=-1$. 
\end{prop}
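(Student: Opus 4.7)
The plan is to follow the standard MMP construction of Koll\'ar components, adapted to the local setting and the given dlt input, in the spirit of \cite[Lemma 1]{Xu14} and as a local analogue of \cite[Proof of Theorem 1.6]{LX14}. Fix a prime component $E_0$ of $E$; the Koll\'ar component $S$ that the MMP produces will agree with $E_0$ as a valuation, so the required identity $a(Y,E;S)=-1$ is automatic. By taking a small $\bQ$-factorial modification via \cite{BCHM10} (if necessary passing to a higher model $W\to Y$ and carrying out the rest of the construction there), I reduce to the case where $Y$ is $\bQ$-factorial with $(Y,E+\mu_*^{-1}D)$ still dlt. For a small rational $\epsilon>0$ I introduce the plt boundary
\[
\Gamma_\epsilon \;:=\; \mu_*^{-1}D + E_0 + (1-\epsilon)\sum_{i\ne 0}E_i,
\]
whose unique lc place is $E_0$, and a direct discrepancy computation yields
\[
K_Y+\Gamma_\epsilon \;=\; \mu^*(K_X+D) \,+\, A_{X,D}(E_0)\cdot E_0 \,+\, \sum_{i\ne 0}\bigl(A_{X,D}(E_i)-\epsilon\bigr) E_i,
\]
which differs from $\mu^*(K_X+D)$ by an effective $\mu$-exceptional divisor of full support once $\epsilon$ is smaller than each $A_{X,D}(E_i)$.

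Next I run the $(K_Y+\Gamma_\epsilon)$-MMP over $X$ with scaling by a $\mu$-ample divisor $H$, which exists and terminates by \cite{BCHM10}. By choosing $H$ generically one orders the steps so that the divisors $E_i$ for $i\ne 0$, together with any new exceptional divisors extracted in the course of the MMP, are all contracted or flipped away, while the birational transform of $E_0$ persists. I halt the MMP at the first model on which the relative Picard number over $X$ drops to one (equivalently, the penultimate stage, just before $E_0$ itself would be contracted). This yields a birational model $\mu'\colon Y'\to X$ whose sole $\mu'$-exceptional divisor $S$ is the birational transform of $E_0$. Since dlt is preserved throughout the MMP and $S$ is the unique surviving coefficient-$1$ component, the pair $(Y',S+\mu'^{-1}_*D)$ remains plt. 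Finally, $S$ is a nonzero effective $\mu'$-exceptional $\bQ$-divisor and $\rho(Y'/X)=1$, so the Negativity Lemma forces $-S$ to be $\mu'$-ample; together with $a(Y,E;S)=a(Y,E;E_0)=-1$ this exhibits $S$ as the desired Koll\'ar component.

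The \emph{main obstacle} is the scheduling in the middle step: arranging the MMP so that $E_0$ is contracted strictly last. Heuristically, a divisorial contraction of a coefficient-$1$ component of a dlt pair can always be deferred, because contracting an lc place strictly decreases the number of lc places and so remains available whenever any other divisorial contraction is. Making this precise within the MMP-with-scaling framework is a somewhat delicate bookkeeping, essentially a local transcription of the argument in \cite[Proof of Theorem 1.6]{LX14}, and is where the bulk of the technical effort would lie.
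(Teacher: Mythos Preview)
Your outline follows the references the paper cites (\cite{Xu14}, \cite{LX14}), and since the paper offers no proof beyond those citations there is nothing further to compare against. The strategy is correct, but you have made it harder than necessary by fixing a component $E_0$ in advance. The proposition only asks that $a(S;Y,E+\mu_*^{-1}D)=-1$, i.e.\ that $S$ be \emph{some} lc place of the dlt pair, not a prescribed one. If instead you run the unperturbed $(K_W+E_W+\mu_{W*}^{-1}D)$-MMP over $X$ (after passing to a $\bQ$-factorial dlt model $W\to Y$), then dlt is preserved at each step; whichever exceptional component $S$ survives to the last divisorial contraction is the strict transform of some $E_i$, so $a(S;Y,E+\mu_*^{-1}D)=-1$ is automatic, and $(Y',S+\mu'^{-1}_*D)$ is dlt with irreducible $\lfloor\cdot\rfloor$, hence plt. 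This dissolves your scheduling obstacle entirely: there is no need to control which component dies last, because any survivor works.

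Two smaller corrections. First, the phrase ``any new exceptional divisors extracted in the course of the MMP'' should be deleted: an MMP only contracts or flips, it never extracts divisors. Second, your Negativity-Lemma argument that $\rho(Y'/X)=1$ forces $-S$ to be $\mu'$-ample is fine when $X$ is $\bQ$-factorial, but in general the MMP terminates at a small $\bQ$-factorialization $X^{\rm qf}\to X$, and the penultimate model has relative Picard number one over $X^{\rm qf}$, not over $X$. One then replaces $Y'$ by $\mathrm{Proj}_X\bigoplus_{m\ge 0}\mu'_*\cO_{Y'}(-mS)$, which is finitely generated by \cite{BCHM10} applied to the klt pair $(Y',(1-\epsilon)S+\mu'^{-1}_*D)$, to obtain the model on which $-S$ is genuinely ample over $X$.
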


We also have the following straightforward lemma. 
\begin{lem}\label{l-inter}
If $S$ is a Koll\'{a}r component as the exceptional divisor of a plt blow-up $\mu: Y\rightarrow X$, then $\vol(\ord_S)=(-S|_S)^{n-1}$ and $\hvol(\ord_S)=(-(K_Y+S+\mu_*^{-1}D)|_S)^{n-1}\cdot A_{(X,D)}(S)$.
\end{lem}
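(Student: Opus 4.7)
The plan is to identify the valuative ideals of $\ord_S$ with pushforwards of line bundles on $Y$, read off $\vol(\ord_S)$ via asymptotic Riemann--Roch on the exceptional divisor $S$, and then use the defining relation for $A_{X,D}(S)$ to convert between $-S|_S$ and $-(K_Y + S + \mu_*^{-1} D)|_S$.

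First I would establish $\fa_k := \{f \in R : \ord_S(f) \geq k\} = \mu_* \cO_Y(-kS)$. This is immediate from the fact that $S$ is a prime divisor on $Y$ and $\mu$ is birational, so $\ord_S(f) \geq k$ iff $\mu^* f \in H^0(Y, \cO_Y(-kS))$. Next, because $-S$ is $\mu$-ample by Definition~\ref{d-kollar}, relative Serre vanishing gives $R^i \mu_* \cO_Y(-kS) = 0$ for all $i>0$ and all $k \gg 0$. Pushing the short exact sequence
$$0 \to \cO_Y(-(k+1)S) \to \cO_Y(-kS) \to \cO_S(-kS|_S) \to 0$$
forward along $\mu$ (and using that $\mu$ contracts $S$ to the point $o$, so $\mu_*\cO_S(-kS|_S)=H^0(S,-kS|_S)$) yields for $k \gg 0$
$$\dim_\bC(\fa_k / \fa_{k+1}) = h^0(S, -kS|_S).$$
Since $-S|_S$ is ample on the projective $(n-1)$-dimensional variety $S$, asymptotic Riemann--Roch gives $h^0(S, -kS|_S) = \frac{(-S|_S)^{n-1}}{(n-1)!} k^{n-1} + O(k^{n-2})$. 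Summing in $k$ then produces $l_R(R/\fa_k) \sim \frac{(-S|_S)^{n-1}}{n!} k^n$, which by the definition of volume says $\vol(\ord_S) = (-S|_S)^{n-1}$.

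For the second equality, I would invoke the ramification formula: since $S$ is the unique $\mu$-exceptional divisor and $a(S; X, D) = A_{X,D}(S) - 1$,
$$K_Y + S + \mu_*^{-1} D = \mu^*(K_X + D) + A_{X,D}(S) \cdot S.$$
Restricting to $S$ kills the $\mu^*$-term because $\mu$ contracts $S$ to a point, giving the numerical identity $-(K_Y + S + \mu_*^{-1} D)|_S \equiv A_{X,D}(S) \cdot (-S|_S)$ in $N^1(S)_\bQ$. Taking $(n-1)$-st self-intersection and substituting into $\hvol(\ord_S) = \vol(\ord_S)\cdot A_{X,D}(S)^n = (-S|_S)^{n-1}\cdot A_{X,D}(S)^n$ then yields the stated expression in terms of $-(K_Y+S+\mu_*^{-1}D)|_S$.

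There is no serious obstacle: the identification of the valuative ideals with the pushforward sheaves and the vanishing of higher direct images are standard once one exploits $\mu$-ampleness of $-S$, and the final step is just the defining formula for the log discrepancy of a plt blowup. The only mild care is that the short exact sequence analysis of $\fa_k/\fa_{k+1}$ requires the vanishing to hold uniformly for all large $k$, which is automatic from relative Serre vanishing applied to the $\mu$-ample Cartier multiple of $-S$.
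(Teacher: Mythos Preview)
Your proposal is correct and follows essentially the same route as the paper: both use the short exact sequence $0\to\cO_Y(-(k+1)S)\to\cO_Y(-kS)\to\cO_S(-kS|_S)\to 0$, the vanishing of $R^1\mu_*\cO_Y(-(k+1)S)$ from $\mu$-ampleness of $-S$, and asymptotic Riemann--Roch on $S$ to compute $\vol(\ord_S)$, and then the relation $K_Y+S+\mu_*^{-1}D\sim_{\bQ,X}A_{X,D}(S)\cdot S$ for the second identity. The only cosmetic difference is that the paper asserts the vanishing for all $k\ge 0$ (giving an exact formula for $\dim R/\fa_m$), whereas you invoke relative Serre vanishing for $k\gg 0$; since the volume is an asymptotic invariant, either suffices.
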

\begin{proof}
For any $k\ge 0$ such that $kS$ is Cartier on $Y$, we have an exact sequence,
$$0\to \cO_{Y}(-(k+1)S)\to \cO_{Y}(-kS)\to \cO_{S}(-kS)\to 0.$$
Because $-S$ is ample over $X$, we have the vanishing 
$$R^1f_*(\cO_{Y}(-(k+1)S))=0,$$
from which we get 
\[
H^0(S, -kS|_S)\cong \frac{H^0(Y, -kS)}{H^0(Y, -(k+1)S)}=\frac{\fa_k(\ord_S)}{\fa_{k+1}(\ord_S)}.
\]
for any such $k$.
Then the result follows easily from the Hirzebruch-Riemann-Roch formula and the asymptotic definition of $\vol(\ord_S)$.

As $K_Y+S+\mu^{-1}_*D\sim_{\mathbb{Q},X} A_{(X,D)}(S)\cdot S$, the second identity is implied by the first statement. 
\end{proof}

\begin{rem}Inspired by the above simple calculation, we can indeed extend the definition of normalized volumes to any model $f:Y\to (X,o)$, such that $f$ is isomorphic over $X\setminus \{o\}$. See Section \ref{s-vmodel}. 
\end{rem}

\begin{lem}\label{l-finite}
Let $f\colon (X',o')\to (X,o)$ be a finite morphism, such that $f^*(K_X+D)=K_{X'}+D'$ for some effective $\mathbb{Q}$-divisors. We assume  $(X,D)$ and $(X',D')$ are klt. If $S$ is a Koll\'ar component given by $Y\to X$ over $o$, then $Y':=Y\times_XX'\to X'$ induces a Koll\'ar component $S'$ over $o'\in (X',D')$. 

Conversely, if $X'\to X$ is Galois with Galois group $G$, then any $G$-invariant Koll\'ar component $S'$ over $o\in (X',D')$ is the pull back from a Koll\'ar component over $o\in (X,D)$.
\end{lem}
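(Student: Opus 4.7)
The plan is to pass between $X$ and $X'$ by base change (respectively quotient) applied to the Koll\'ar component model, and check the three axioms of Definition \ref{d-kollar} using the finite crepant formalism. In what follows I shall assume $f^{-1}(o) = \{o'\}$ after localizing, which is harmless since Definition \ref{d-kollar} is local at $o$.

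For the forward direction, set $Y' := (Y\times_X X')^\nu$ and let $\tilde f\colon Y'\to Y$ and $\mu'\colon Y'\to X'$ be the induced morphisms. Then $\mu'$ is proper birational and is an isomorphism over $X'\setminus\{o'\}$, with exceptional set $\tilde f^{-1}(S)$. Writing its reduction as $S' = \sum_i S'_i$ and $\tilde f^* S = \sum_i n_i S'_i$, the standard finite crepant formula $A_{X',D'}(S'_i)=n_i\cdot A_{X,D}(S)$, combined with $f^*(K_X+D)=K_{X'}+D'$ and the identity $K_Y+S+\mu^{-1}_*D\sim_{\mathbb{Q},X} A_{X,D}(S)\cdot S$ coming from Lemma \ref{l-inter}, matches coefficients to yield
\[
\tilde f^*\bigl(K_Y+S+\mu^{-1}_*D\bigr) \;=\; K_{Y'}+S'+\mu'^{-1}_*D'.
\]
Hence plt of $(Y,S+\mu^{-1}_*D)$ pulls back to plt of $(Y',S'+\mu'^{-1}_*D')$ by the standard behavior of plt under finite crepant covers. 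Ampleness and $\mathbb{Q}$-Cartierness of $-S'$ over $X'$ follow from the corresponding properties of $-S$ over $X$ by pullback along $\tilde f$, which is finite.

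To promote this to a single Koll\'ar component, I would observe that plt forces the components $S'_i$ to be normal and pairwise disjoint, while $\tilde f^{-1}(S) = \mu'^{-1}(o')$ is connected by Zariski's main theorem applied to the proper birational morphism $\mu'$ to the normal variety $X'$. Therefore there is only one component, $S'=S'_1$ is irreducible, and it is the desired Koll\'ar component.

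For the converse, the $G$-invariance of $\mathrm{ord}_{S'}$ together with the uniqueness of the birational model extracting a given divisorial valuation implies that the $G$-action on $X'$ lifts uniquely to $Y'$. Set $Y := Y'/G$, $S := S'/G$, and let $q\colon Y'\to Y$ be the quotient; then $\mu\colon Y\to X$ is proper birational and an isomorphism over $X\setminus\{o\}$. A parallel coefficient-matching argument, now in the other direction, establishes the crepant identity $q^*(K_Y+S+\mu^{-1}_*D)=K_{Y'}+S'+\mu'^{-1}_*D'$, so plt descends from $(Y',S'+\mu'^{-1}_*D')$ to $(Y,S+\mu^{-1}_*D)$ by the usual descent for plt under finite crepant covers; the $\mathbb{Q}$-Cartier ample divisor $-S'$ over $X'$ descends to a $\mathbb{Q}$-Cartier ample divisor $-S$ over $X$ by standard finite quotient descent (using that $G$ is finite and $S'$ is $G$-invariant). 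The main technical point throughout is verifying the crepant compatibility along $\tilde f$ (resp. $q$), which hinges on the ramification formula for log discrepancies; once this is in place, the irreducibility of $S'$ and the plt transfer are routine.
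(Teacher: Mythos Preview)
Your forward direction is essentially the paper's argument: the paper also reduces to showing $S'$ is irreducible, and obtains this from ``components are disjoint'' (via klt adjunction on each component) plus ``the exceptional fibre is connected'' (via Koll\'ar--Shokurov connectedness). Your version, using plt to force disjointness and Zariski's main theorem for connectedness of $\mu'^{-1}(o')$, is the same mechanism with slightly different packaging.

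For the converse your route is genuinely different. The paper does \emph{not} form the quotient $Y'/G$ directly. Instead it chooses a general divisor $H\in |-m(K_{Y'}+S'+\mu'^{-1}_*D')|$ so that $(Y',S'+\mu'^{-1}_*D'+H)$ stays plt, replaces $H$ by the $G$-average $H_G=\frac{1}{|G|}\sum_{g\in G}g^*H$, pushes $H_G$ down to $X'$ and then descends it to a $\bQ$-divisor $H_X$ on $X$ with $f^*(K_X+D+H_X)=K_{X'}+D'+\mu'_*H_G$. One then reads off that $(X,D+H_X)$ is plt with a unique log canonical place $S$, and invokes the standard extraction of that place (via \cite{BCHM10}) to produce the Koll\'ar component over $o\in(X,D)$. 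The virtue of the paper's method is that it sidesteps any direct verification on the quotient model: one never has to check that $Y'/G$ has $\bQ$-Cartier $S$, nor that the crepant identity for $q$ holds with the exact boundary $S+\mu^{-1}_*D$; everything is encoded in the single plt pair $(X,D+H_X)$. Your direct quotient argument is also correct---plt is preserved both ways under finite crepant morphisms \cite[5.20]{KM98}, $\bQ$-Cartierness and relative ampleness descend along finite quotients, and the crepant identity along $q$ follows by combining Riemann--Hurwitz with $f^*(K_X+D)=K_{X'}+D'$ away from $S$ and the coefficient calculation $(r-1)+1=r$ along $S'$. What you gain is that $Y=Y'/G$ is visibly the extraction model, without appealing to \cite{BCHM10}; what the paper gains is a shorter argument that avoids checking the descent statements one by one.
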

\begin{proof} The first part is standard. Denote by $\mu'\colon Y'\to X'$ the birational morphism and by $S'=(f_Y^{-1}(S))_{\rm red}$ where $f_Y\colon Y'\to Y$ is the induced morphism, then $(Y',\mu'^{-1}_*D'+S')$ is log canonical. If we restrict to $T$ a component of $S'$, 
$$(K_{Y'}+\mu'^{-1}_*D'+S')|_{T}=K_{T}+\Delta_{T},$$
then $(T,\Delta_{T})$ is klt, which by Koll\'ar-Shokurov connectedness theorem implies that $T=S'$.

For the converse, let  $$L\sim_{X'}-m(K_{Y'}+\mu'^{-1}_*D'+S')$$  be a divisor of general position for sufficiently divisible $m$ and  $H:=\frac{1}{m}L$, then $(Y', S'+\mu'^{-1}_*D'+H)$ is plt. Replacing $H$ by $H_G:=\frac{1}{|G|}(\sum_{g\in G} g^*H)$, we know that $(X', D'+ \mu_*H_{G})$ is $G$-invariant, and there exists a $\mathbb{Q}$-divisor $H_X\ge 0$, such that
$$f^*(K_X+D +H_X)=K_{X'}+D'+\mu_*H_G.$$
Therefore, $(X,D+H_X)$ is plt, and its unique log canonical place is a divisor $S$ which is a Koll\'ar component over $o\in (X,D)$ whose pull back gives the Koll\'ar component $S'$ over $o'\in (X',D')$.  
\end{proof}

We prove a change of volume formula for Koll\'ar components under a finite map.
\begin{lem}\label{l-finitevolume}With the same notation as in Lemma \ref{l-finite}, then 
$$d\cdot \hvol_{(X,D)}(\ord_S)=\hvol_{(X',D')}(\ord_{S'}),$$
where $d$ is the degree of $X'\to X$. 
\end{lem}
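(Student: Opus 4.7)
The approach is to express $\hvol$ in terms of intersection numbers on the Koll\'ar component via Lemma \ref{l-inter}, and then track both factors $A_{X,D}(S)^n$ and $(-S|_S)^{n-1}$ separately under the induced finite morphism $g\colon Y'\to Y$ pulled back from $f$.

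Setting up: let $\mu\colon Y\to X$ be the plt blow-up extracting $S$, and let $Y'$ be the normalization of $Y\times_X X'$, with induced finite map $g\colon Y'\to Y$ of degree $d$ and projection $\mu'\colon Y'\to X'$ which, by Lemma \ref{l-finite}, is the plt blow-up extracting $S'$. Let $r$ denote the ramification index of $g$ along $S'$, so that $g^*S=rS'$ as $\mathbb{Q}$-Cartier divisors and $g|_{S'}\colon S'\to S$ is generically finite of degree $d/r$.

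For the log discrepancy, the restriction of $\ord_{S'}$ to $K(X)\subset K(X')$ equals $r\cdot\ord_S$, and the crepant invariance of log discrepancies under finite crepant maps, together with homogeneity, yields
\[
A_{X',D'}(\ord_{S'})=A_{X,D}(r\cdot\ord_S)=r\cdot A_{X,D}(S).
\]
For the volume, using $\vol(\ord_S)=(-S|_S)^{n-1}$ and $\vol(\ord_{S'})=(-S'|_{S'})^{n-1}$ from Lemma \ref{l-inter}, together with $(g|_{S'})^*(-S|_S)=-r\cdot S'|_{S'}$, the projection formula applied to $g|_{S'}$ yields
\[
\frac{d}{r}(-S|_S)^{n-1}=\bigl((g|_{S'})^*(-S|_S)\bigr)^{n-1}=r^{n-1}(-S'|_{S'})^{n-1},
\]
so $\vol(\ord_{S'})=\dfrac{d}{r^n}\vol(\ord_S)$. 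Multiplying the two contributions gives
\[
\hvol_{X',D'}(\ord_{S'})=\frac{d}{r^n}\vol(\ord_S)\cdot r^n A_{X,D}(S)^n=d\cdot \hvol_{X,D}(\ord_S).
\]

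The main point to double-check is the crepant log-discrepancy transformation $A_{X',D'}(S')=r\cdot A_{X,D}(S)$ under the ramified finite map, which is a standard consequence of $f^*(K_X+D)=K_{X'}+D'$ and the fact that $\mu'$ extracts exactly $S'$ from Lemma \ref{l-finite}; the remaining computation is a routine application of the projection formula to $g|_{S'}$.
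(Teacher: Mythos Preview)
Your proof is correct and takes essentially the same approach as the paper: both use Lemma \ref{l-inter} to express $\hvol$ via intersection numbers on the Koll\'ar component, invoke the log-discrepancy relation $A_{X',D'}(S')=r\cdot A_{X,D}(S)$ under the ramified cover (the paper cites \cite[5.20]{KM98}), and finish with the projection formula for $g|_{S'}\colon S'\to S$ of degree $d/r$. The only cosmetic difference is that the paper factors $\hvol(\ord_S)=A_{X,D}(S)\cdot\bigl(-(K_Y+S+\mu_*^{-1}D)|_S\bigr)^{n-1}$ and pulls back the log-canonical class (which is crepant, so the degree factor $d/r$ appears directly), whereas you factor $\hvol=A^n\cdot(-S|_S)^{n-1}$ and pull back $S$ itself, picking up the extra $r^{n-1}$ which then cancels against $r^n$ from $A^n$.
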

\begin{proof}
The pull back of $S$ is $S'$ which is irreducible by Lemma \ref{l-finite}. Let the degree of $S'\to S$ be $a$ and the ramified degree be $r$. We have the identity: 
$$ar=d \qquad \mbox{and}\qquad rA_{(X,D)}(\ord_S)=A_{X',D'}(\ord_{S'})$$ 
(see \cite[5.20]{KM98}).
By Lemma \ref{l-inter}, we know that 
\begin{eqnarray*}
d\cdot \hvol_{(X,D)}(\ord_S)& = & ar\cdot A_{(X,D)}(\ord_S)\cdot ((K_Y+S+\mu_*^{-1}D)|_{S})^{n-1}\\
&=&(rA_{(X,D)}(\ord_S))\cdot \big( a\cdot((K_Y+S+\mu_*^{-1}D)|_{S})^{n-1}\big)\\
&= &A_{(X',D')}(\ord_{S'})\cdot \big( ((K_{Y'}+S'+\mu_*'^{-1}D')|_{S'})^{n-1}\big)\\
&=&\hvol_{(X',D')}(\ord_{S'}),
\end{eqnarray*}
where for the third equality we use the projection formula of intersection numbers. 
\end{proof}



\subsection{Deformation to normal cones}\label{ss-deformation}
Let $(X, o)=({\rm Spec}(R), \fm)$ be an algebraic singularity such that $(X,D)$ is klt for a $\mathbb{Q}$-divisor $D\ge 0$. Let $S$ be a Koll\'{a}r component and $\Delta=\Delta_S$ be the different divisor defined by the adjunction $(K_Y+S+\mu_*^{-1}D)|_S=K_S+\Delta_S$ where $Y\rightarrow X$ is the extraction of $S$ (see \eqref{eq-diffS}).

For simplicity denote $v_0:=\ord_S$. Also denote

\begin{equation}\label{eq-R*}
R^*:=\bigoplus_{k=0}^{+\infty} \fa_k(v_0)/\fa_{k+1}(v_0)=\bigoplus_{k=0}^{+\infty} R^*_k
\end{equation} 
and its $d$-th truncation
$$R^{*(d)}=\bigoplus_{k=0}^{+\infty} \fa_{dk}(v_0)/\fa_{dk+1}(v_0)=\bigoplus_{k=0}^{+\infty} R^*_{dk}\qquad \mbox{for $d\in \mathbb{N}$}.$$

Now we give a more geometric description of $\Spec(R^*)$ and $\Spec(R^{*(d)})$ using the ideal of degenerating  $o\in (X,D)$ to an (orbifold) cone over the Koll\'ar component $S$. Assume $\mu\colon Y\rightarrow X$ is the extraction of the Koll\'{a}r component $S$ of $(X, o)$. Then 
$\mu_{\mathbb{A}^1}\colon Y\times\bA^1\rightarrow X\times \bA^1$ has the exceptional divisor $S\times \bA^1$. The divisor $S$ is not necessarily Cartier, but only $\mathbb{Q}$-Cartier. Thus we can take the index one covering Deligne-Mumford stack $\pi: \mathfrak{Y}\to Y$ for $S$. So $\pi$ is isomorphic over $Y\setminus S$ and $\pi^*(S)=\mathfrak{S}$ is Cartier on $\mathfrak{Y}$. Note that $S$ and $Y$ are coarse moduli spaces of $\mathfrak{S}$ and $\mathfrak{Y}$ respectively.

We consider the deformation to the normal cone construction for $\mathfrak{S}\subset \mathfrak{Y}$ (see \cite[Chapter 5]{Ful84}). More precisely, we consider the blow up $ \tilde{\phi}_1\colon \mathfrak{Z} \rightarrow \mathfrak{Y}\times \bA^1$ along $\mathfrak{S}\times\{0\}$. Denote by $\mathfrak{P}$ the exceptional divisor and by $\mathfrak{S}'_{\bA^1}$ the strict
transform of $\mathfrak{S}\times\bA^1$. We note that $\mathfrak{P}$ has a stacky structure along the 0 and $\infty$ section, but a scheme structure at other places. 
Then  $\mathfrak{S}'_{\bA^1}\subset \mathfrak{Z}$ is a Cartier divisor which is proper over $\bA^1$ and can be contracted to a normal Deligne-Mumford stack $\tilde{\psi}_1\colon \mathfrak{Z}\to \mathfrak{W}$ and in this way we get a flat family $\mathfrak{W}\rightarrow \bA^1$ such that $\mathfrak{W}_t\cong X$ and $\mathfrak{W}_0\cong \bar{\mathfrak{C}}\cup \mathfrak{Y}_0$, where $\mathfrak{Y}_0$ is the birational transform of $Y\times\{0\}$. If we denote by $\mathfrak{W}^\circ:=\mathfrak{W}\setminus \mathfrak{Y}_0$, then the fiber $\mathfrak{W}^\circ$ over $0$ is isomorphic to $\mathfrak{C}$ which is an affine orbifold cone over $\mathfrak{S}$ with the polarization given by $\mathcal{O}_{\mathfrak{Y}}(-\mathfrak{S})|_{\mathfrak{S}}$.
Moreover, $\bar{\mathfrak{C}}$ is the projective orbifold cone completing $\mathfrak{C}$. We will also denote by $\cW$, $\cW^\circ$, $\cZ$, $\bP$ the underlying coarse moduli spaces of $\mathfrak{W}$, $\mathfrak{W}^\circ$, $\mathfrak{Z}$, $\mathfrak{P}$ respectively. In particular, we have (see figure \ref{fig-deg}):
\begin{equation}\label{eq-fibers}
\def\arraystretch{1.2}
\begin{array}{l}
\cZ\times_{\bA^1} (\bA^1\backslash \{0\})=Y\times(\bA^1\backslash\{0\}), \quad \cZ\times_{\bA^1}\{0\}=\bP\cup Y_0.\\
\cW\times_{\bA^1} (\bA^1\backslash\{0\})=X\times(\bA^1\backslash\{0\}), \quad \cW\times_{\bA^1}\{0\}=\bar{C}\cup Y_0. \\
\cW^\circ \times_{\bA^1} (\bA^1\backslash\{0\})=X\times(\bA^1\backslash\{0\}), \quad \cW^\circ\times_{\bA^1}\{0\}=C.
\end{array}
\end{equation}

Let $d$ be a positive integer such that $d\cdot S$ is Cartier in $Y$, then $\mathfrak{C}^{(d)}$ given by the cone over $\mathcal{O}_{\mathfrak{Y}}(-d\cdot\mathfrak{S})|_{\mathfrak{S}}$ is a degree $d$ cyclic quotient of $\mathfrak{C}$, which is a usual ($\mathbb{A}^1$-)cone over $\mathfrak{S}$.
We denote by $C$ and $C^{(d)}$  the underlying coarse moduli space of $\mathfrak{C}$ and $\mathfrak{C}^{(d)}$. We also denote by $S$ the coarse moduli space of $\mathfrak{S}$. The vertex of $C$ is denoted by $o_C$.

For any $k$ such that $kS$ is Cartier, applying the exact sequence, 
$$0\to \mathcal{O}_Y(-(k+1)S)\to \mathcal{O}_Y(-kS)\to \mathcal{O}_{S}(-kS)\to 0,$$
since $h^1(\mathcal{O}_Y(-(k+1) S))=0$ by the Grauert-Riemenschneider vanishing theorem, we get:
\[
H^0(S, \mathcal{O}(-k S|_S))\cong H^0(\cO_Y(-kS))/H^0(\cO_Y(-(k+1)S)).
\]
Notice that the right hand side is equal to:
\[
\frac{\mu_*\cO_Y(-kS)}{\mu_*\cO_Y(-(k+1)S)}=\frac{\fa_{k}(v_0)}{\fa_{k+1}(v_0)}.
\]
In particular, $C^{(d)}={\rm Spec}(R^{*(d)})$. Similarly, we have $C={\rm Spec} (R^*)$.

There is also a degree $d$ cyclic quotient morphism $h\colon \bar{C}\to \bar{C}^{(d)}$, and we know that
$$h^*(K_{\bar{C}^{(d)}}+C^{(d)}_{1}+C^{(d)}_{2})=K_{\bar{C}}+C_{D},$$ 
where $C_D$ is the intersection of $\bar{C}$ with the birational transform of $D\times \mathbb{A}^1$ and $C^{(d)}_{1}$ (resp. $C^{(d)}_{2}$) on $\bar{C}^{(d)}$ is the induced cone over the branched $\mathbb{Q}$-divisor  on $S$ of $\mathfrak{S}\to S$ (resp. $\mu^{-1}_*D|_S$). 

\subsection{Filtrations and valuations}\label{sec-filtration}

Here we recall some facts about $\bZ$-graded filtration and its relation to valuations following the work in \cite{TW89}. 
A filtration on $R$ is a decreasing sequence $\cF:=\{\cF^m\}_{m\in \bZ}$ of ideals of $R$ satisfying the following conditions:

{\bf (i)} $\cF^m\neq 0$ for every $m\in \bZ$, $\cF^m=R$ for $m\le 0$ and $\cap_{m\ge 0}\cF^m=(0)$.

{\bf (ii)} $\cF^{m_1}\cdot \cF^{m_2}\subseteq \cF^{m_1+m_2}$ for every $m_1, m_2\in \bZ$.

\vskip 2mm
Notice we can replace the grading $\bZ$ by any abelian group that is isomorphic to $\bZ$. For a given filtration, we have the Rees algebra and extended Rees algebra:
\begin{equation}
\cR:=\cR(\cF)=\bigoplus_{k=0}^{+\infty} (\cF^k R) t^{-k}, \quad \cR':=\cR'(\cF)=\bigoplus_{k=-\infty}^{+\infty} (\cF^k R) t^{-k},
\end{equation}
and the associated graded ring:
\begin{equation}
\gr_\cF(R)=\cR'/t \cR'=\bigoplus_{k=0}^{+\infty} (\cF^k R/\cF^{k+1} R) t^{-k}.
\end{equation}
Assuming $\cR'$ is finitely generated, $\cX:={\rm Spec}_{\bC[t]} (\cR')$ can be seen as a $\bC^*$-equivariant flat degeneration of $X={\rm Spec}(R)$ into 
$\cX_0={\rm Spec}_{\bC} (\cR'/t\cR')={\rm Spec}_{\bC}(\gr_\cF R)$. Denote $E={\rm Proj} (\gr_\cF(R))$, $\tilde{X}={\rm Proj}_{R} \cR$. Then the natural map $\tilde{X}\rightarrow X$ is the filtered blow up associated with the $\cF$ such that $E$ is the exceptional divisor. Moreover $\tilde{X}$ can be seen as a flat deformation of a natural filtered blow up on $\cX_0$. Indeed following \cite[5.15]{TW89}, we have a filtration $\cF$ on $\cR'$:
\[
\cF^m \cR'=\left\{\sum_{k=-\infty}^{+\infty} \left(\cF^{\max(k,m)} R\right) t^{-k}  \right\}.
\]

The objects associated to the corresponding Rees algebra and graded algebra over $\cR'$ are:
\[
\tilde{\cX}={\rm Proj}_{\cR'} \bigoplus_{r=0}^{+\infty}(\cF^r \cR' ) T^{-r}, \quad \mathcal{E}={\rm Proj}_{\bC} \bigoplus_{r=0}^{+\infty} (\cF^r \cR' /\cF^{r+1} \cR') T^{-r}.
\]
Moreover, since $\cR'$ is finitely generated, there is an embedding  $X\subset \mathbb{C}^p$ for some $p\in\mathbb{N}$ given by $x_i\to f_i$ $(i=1,...,p)$ where $f_1,...,f_p$ is a set of elements with $f_i\in \mathcal{F}^{k_i}R$  such that $t^{-k_i}f_i$ $(i=1,...,p)$ and $t$  generates $\cR'$. Set ${\rm deg}(x_i)=k_i$ and let $\widehat{\bC^p}\to \bC^p$ be the weighted blow-up with weights $(k_1, \dots, k_p)$. 

Then we have the following commutative diagram (see \cite[Proposition 5.17]{TW89}):
\[
\begin{CD}
&& E @>>> \cE @<<< E \\
&& @VVV @VVV @VVV \\
\widehat{\bC^p}@<<<  \tilde{X} @>>> \tilde{\cX} @<<< \tilde{\cX}_0\\
@VVV  @VVV @VVV @VVV\\
\bC^p @<<< X@>>> \cX  @<<< \cX_0.
\end{CD}
\]

To relate the filtrations to valuations, the need the following well-known fact:
\begin{lem}[{see \cite[Page 8]{Tei14}}]\label{lem-fil2val}
If the associated graded ring of $\cF$ is an integral domain, then the filtration $\cF$ is induced by a valuation. 
\end{lem}
\begin{proof}
We define the order function $v: R\rightarrow \bZ$ by $v(f)=\max\{m; f\in \cF^m\}$. Then by the defining properties of filtrations, $v$ satisfies $v(f+g)\ge \min\{v(f), v(g)\}$ and $v(fg)\ge v(f)+v(g)$ for any $f,g\in R$. For any $f\in R$, let $[f]$ denote the image of $f\in R$ under the quotient map $\cF^{v(f)} \rightarrow \cF^{v(f)}/\cF^{v(f)+1}\subset \gr_{\cF}R$. Then $[f]\cdot [g]\neq 0$ by the assumption that $\gr_{\cF}$ is an integral domain. This translates to $v(fg)=v(f)+v(g)$ which implies $v$ is indeed a valuation.

\end{proof}

Actually we can be more precise in a special case that we will deal with later. There is a natural $\bC^*$-action on $\cX_0$ associated to the natural $\mathbb{N}$-grading such that the quotient is isomorphic to $E$. Let $\mathcal{J}=\bigoplus_{k\ge 0} \cF^{k+1}t^{-k}=t \cR'\cap \cR$ so that $\cR/\mathcal{J}\cong \gr_{\cF}(R)\cong \cR'/t \cR'$.
Now we assume furthermore that 
$E$ is a normal projective variety. This implies both $\cR$ and $\cR'$ are normal (see \cite{TW89}). 
Let $\mathfrak{P}$ be the unique minimal prime ideal of $\cR$ over $\mathcal{J}$ that corresponds to the cone over ${E}$, and $w$ the valuation of $K(t)$ attached to $\mathfrak{P}$.  Then the restriction of $w$ to $R$ is equal to $b \cdot \ord_E$. Assume $a=w(t)$. Thus the filtration $\cF$ is equivalent to the filtration that is given by:
\[
(t^m \cR') \cap R=\{f\in R; \ord_E(f)\ge m  a/b\}.
\]
\begin{rem}
There is a general Valuation Theorem about  the relation between finitely generated filtrations and valuations proved by Rees for which we refer the reader to \cite{Ree88}. See also \cite{BHJ15}.
\end{rem}



\section{Volume of models}\label{s-vmodel}

One very useful tool for us to study the minimzer of the normalized local volume is the concept of  a local volume of a model. It is this concept which enables us to apply the machinery of the minimal model program to construct different models, especially those yielding Koll\'ar components.  

\subsection{Local volume of models}\label{s-lvmodel}
In this section, we extend the definition of volume to volumes of birational models in the `normalized' sense. We use the concept of local volumes as in \cite{ELS03, Ful13}. Let us first recall the definition, which is from \cite{Ful13}.
\begin{defn}[Local volume](cf. See \cite{Ful13})
Let $X$ be a normal algebraic variety of dimension $n\ge 2$ and let $o$ be a point on $X$. For a fixed  a proper birational map $\mu\colon Y \to  X$ and a Cartier divisor $E$ on $Y$,  we define the local volume of $E$ at $o$ to be
$$\vol^F_o(E) = \limsup_{m\to \infty} \frac{h^1_o(mE)} {m^n /n!},$$
where $h^1_o(mE):=\dim H^1_{\{o\}}(X, \mu_*\cO_Y(mE)).$

If $E$ is a $\mathbb{Q}$-Cartier divisor, we define its volume to be
$$\vol^F_{o}(E):=\frac{\vol^F_o(mE)}{m^n},$$
for sufficiently divisible $m$.

\end{defn}

\begin{lem}\label{l-pushforward}
Let $\mu\colon Y\to X$ be a birational morphism. If $E\ge 0$ is an exceptional $\mathbb{Q}$-divisor, such that ${\rm Supp}(E)\subset \mu^{-1}(o)$, then 
$$\vol^F_{o}(-E)=\limsup_{k\to \infty} \frac{l_R(\mathcal{O}_X/\fa_k)}{k^n/n!},$$
where $k$ is sufficiently divisible and $\fa_k=\mu_*(\cO_Y(-kE))$.
\end{lem}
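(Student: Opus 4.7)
The plan is to identify, for each sufficiently divisible $k$, the local cohomology group $H^1_o(X, \mu_*\cO_Y(-kD))$ directly with the finite-length module $\cO_X/\fa_k$, and then pass to the $\limsup$.

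First, I would check that $\fa_k$ is a coherent ideal sheaf with finite colength supported at $o$. Since $D \geq 0$ has support contained in $\mu^{-1}(o)$, for $k$ divisible enough that $-kD$ is integral and Cartier on $Y$, the sheaf $\cO_Y(-kD)$ is a subsheaf of $\cO_Y$ that agrees with $\cO_Y$ away from $\mu^{-1}(o)$. Pushing forward and using that $X$ is normal (so $\mu_*\cO_Y = \cO_X$ by Zariski's main theorem), we see $\fa_k = \mu_*\cO_Y(-kD)$ is a coherent ideal sheaf coinciding with $\cO_X$ on $X \setminus \{o\}$. Hence $\cO_X/\fa_k$ is supported at $o$ and $l_R(\cO_X/\fa_k) < \infty$.

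The main step is then to apply the local cohomology functor $H^\bullet_o(X, -)$ to the short exact sequence
\begin{equation*}
0 \to \fa_k \to \cO_X \to \cO_X/\fa_k \to 0,
\end{equation*}
producing
\begin{equation*}
0 \to H^0_o(\fa_k) \to H^0_o(\cO_X) \to H^0_o(\cO_X/\fa_k) \to H^1_o(\fa_k) \to H^1_o(\cO_X) \to \cdots.
\end{equation*}
Because $X$ is normal of dimension $n \geq 2$, Serre's condition $S_2$ gives $\mathrm{depth}_o \cO_{X,o} \geq 2$, so $H^0_o(\cO_X) = H^1_o(\cO_X) = 0$; in particular $H^0_o(\fa_k) = 0$ since $\fa_k \hookrightarrow \cO_X$. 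Meanwhile $H^0_o(\cO_X/\fa_k) = \cO_X/\fa_k$ because this sheaf is supported at $o$. The long exact sequence therefore collapses to an isomorphism $H^1_o(X, \fa_k) \cong \cO_X/\fa_k$, which yields $h^1_o(-kD) = l_R(\cO_X/\fa_k)$.

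Dividing by $k^n/n!$ and taking $\limsup$ over sufficiently divisible $k$ then gives the claimed identity. There is no serious obstacle: the entire argument rests on the two vanishings $H^0_o(\cO_X) = H^1_o(\cO_X) = 0$, which follow from the standing hypothesis (in the definition of $\vol^F_o$) that $X$ is normal of dimension at least two, so no klt or Cohen--Macaulay assumption is needed for this particular statement.
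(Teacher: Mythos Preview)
Your proof is correct. The paper does not give its own argument here but simply cites \cite[Remark 1.1(ii)]{Ful13}; your direct computation via the long exact sequence in local cohomology and the vanishing $H^0_o(\cO_X)=H^1_o(\cO_X)=0$ from the $S_2$ condition is exactly the standard justification behind that citation, so there is no substantive difference in approach.
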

\begin{proof}This follows from \cite[Remark 1.1(ii)]{Ful13} (see also \cite[Remark 1.31 and 1.32]{Ful13}).
\end{proof}
The right hand side of the above display is also the volume ${\rm vol}(\fa_{\bullet})$ defined in \cite[Definition 3.1, Proposition 3.11]{ELS03}. In particular, given a prime divisor $E$ over $o$ with log discrepancy $a$, we see that 
$$\vol^{F}_o(-aE)=\hvol_{(X,D),o}(\ord_E).$$

\begin{defn}\label{d-modelv}
Assume that $o\in (X,D)$ is a klt singularity, and $\mu\colon Y\to (X,o)$ is a birational morphism such that  $\mu$ is an isomorphism over $X\setminus \{o\}$. Let $E=\sum_i G_i$ be the reduced divisor supported on the divisorial part of $\mu^{-1}(o)$. Then we define the volume $\vol_{(X,D),o}(Y)$ (abbreviated as $\vol(Y)$ if $(X,D; o)$ is clear) of $Y$ to be 
$$\vol_{(X,D),o}(Y):=\vol^F_{o}(-K_Y-E-\mu_*^{-1}(D))=\vol^F_{o}\left(\sum_i -a_iG_i\right),$$
where $a_i=A_{(X,D)}(G_i)$ is the log discrepancy of $G_i$.
\end{defn}

We will mainly combine the above definition with the following construction.
\begin{defn}\label{d-dlt}
For a klt pair $(X,D)$ with an ideal $\fa$, if $c$ denotes its log canonical threshold $\lct(X,D;\fa)$, then we say that $\mu\colon Y\to X$ is a {\it dlt modification} of $(X,D+c\cdot \fa)$, if the following conditions are all satisfied:
\begin{enumerate}
\item denote the divisorial part of $\mu^*(\fa)$ by $\mathcal{O}(-\sum m_iG_i)$ and denote by $\mu^*(K_X+D)=K_Y+D_Y$, then 
$$D_Y+c\cdot \sum m_i G_i=\mu^{-1}_*(D)+E$$ where $E$ is the reduced  divisor on ${\rm Ex}(\mu)$; 
\item $(Y,D_Y+c\cdot \sum m_i G_i)$ is dlt.
\end{enumerate}
By the argument in \cite{OX12}, we know that it follows from the MMP results in \cite{BCHM10} that a dlt modification of $(X,D+c\cdot \fa)$ always exists. More concretely, we can choose general elements $f_j\in\fa$ $(1\le j \le l)$ which generate $\fa$ such that $ \frac{c}{l}<1$. If we let $D_j=(f_j=0)$, then $Y$ is the dlt modification of $(X,D+c\cdot\frac{1}{l}\sum^l_{j=1}D_j)$. 
\end{defn}
\begin{lem}\label{l-gooddlt}
We can indeed assume that  $-K_Y-{\mu}_*^{-1}D-E$ is nef over $X$.
\end{lem}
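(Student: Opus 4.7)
The plan is to first identify the divisor whose relative nefness needs to be arranged, and then to construct a dlt modification realizing this via the minimal model program, starting from the normalized blowup of $\fa$.

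First I would observe that, because $(Y, \mu_*^{-1}D + E)$ is a dlt modification of $(X, D + c\fa)$ in the sense of Definition~\ref{d-dlt}, every extracted prime divisor $G_i$ is an lc place of $(X, D + c\fa)$, so $A_{X,D}(G_i) = c \cdot m_i$ with $m_i = v_{G_i}(\fa)$. Substituting into the discrepancy formula gives
\[
K_Y + \mu_*^{-1}D + E = \mu^*(K_X + D) + \sum_i A_{X,D}(G_i)\, G_i = \mu^*(K_X + D) + cF,
\]
where $F = \sum_i m_i G_i$ is the divisorial part of $\mu^*\fa$. Hence $-(K_Y + \mu_*^{-1}D + E) \sim_{\bQ, X} -cF$, so the lemma reduces to constructing a dlt modification on which $-F$ is nef over $X$.

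Next I would realize $-F$ as pulled back from the normalized blowup $\pi: \widetilde{X} \to X$ of $\fa$, where $\fa \cdot \cO_{\widetilde{X}} = \cO_{\widetilde{X}}(-F_{\widetilde{X}})$ and $-F_{\widetilde{X}}$ is $\pi$-ample. Take a log resolution $\mu_0: Y_0 \to X$ of $(X, D + c\fa)$ factoring as $\nu: Y_0 \to \widetilde{X}$ followed by $\pi$; then the Cartier divisor of $\fa \cdot \cO_{Y_0}$ equals $-\nu^* F_{\widetilde{X}}$, which is relatively nef over $X$ as the pullback of a relatively ample divisor. Following Definition~\ref{d-dlt}, pick general $f_1, \ldots, f_l \in \fa$ with $l > 1/c$ and set $H = \frac{1}{l}\sum \mathrm{div}(f_j)$, so that $(X, D + cH)$ is lc with the same lc places as $(X, D + c\fa)$. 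Running the $(K_{Y_0} + \mu_{0,*}^{-1}(D + cH) + E_0)$-MMP over $X$ with scaling by a divisor pulled back from $\widetilde{X}$, this terminates by \cite{BCHM10} with a dlt modification $\mu: Y \to X$ of $(X, D + c\fa)$; the compatibility of the scaling with the pullback from $\widetilde{X}$ is what will keep $-F$ relatively nef throughout the MMP.

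The main obstacle will be controlling the MMP so that the nefness of $-F$ survives every extremal contraction, especially when some Rees valuation of $\fa$ is a klt place of $(X, D + c\fa)$ and therefore must be contracted. Such a contraction formally breaks the morphism $Y \to \widetilde{X}$, but the negativity lemma combined with the MMP-with-scaling framework of \cite{BCHM10} should guarantee that each extremal contraction occurs along an $F$-non-positive curve, so $-F$ remains nef on the terminal model.
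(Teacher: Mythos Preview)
Your opening observation is correct and matches the paper: since each $G_i$ computes $\lct(X,D;\fa)$, one has $A_{X,D}(G_i)=cm_i$, so $-(K_Y+\mu_*^{-1}D+E)\sim_{\bQ,X}-c\sum m_iG_i$, and the task is to arrange that $-\sum m_iG_i$ is nef over $X$.

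The gap is in your final step. Running the MMP for $\bigl(Y_0,\mu_{0,*}^{-1}(D+cH)+E_0\bigr)$ over $X$ contracts the klt places, but there is no mechanism forcing each extremal ray to be $F$-nonpositive: the log canonical class of this MMP is $\sim_{\bQ,X}\sum_jA_{(X,D+cH)}(E_{0,j})E_{0,j}$, supported on the klt places and numerically unrelated to $F$. Scaling by a pullback from $\widetilde{X}$ does not help, because that pullback is only nef on $Y_0$, not ample, so the MMP-with-scaling hypotheses fail; and in any case scaling merely orders the rays, it does not make them $F$-trivial. Your appeal to the negativity lemma is too vague to close this: once a Rees divisor that is a klt place is contracted, the map to $\widetilde{X}$ is lost and there is no reason the pushforward of $-F$ remains nef.

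The paper's argument avoids this by absorbing $-F$ into the log canonical divisor of the MMP rather than trying to preserve an auxiliary nef class. Starting from \emph{any} dlt modification $Y$, one has $K_Y+\mu_*^{-1}(D+cH)+E\sim_{\bQ,X}0$, so for small $\epsilon>0$ the dlt pair $\bigl(Y,\mu_*^{-1}(D+c(1+\epsilon)H)+E\bigr)$ satisfies
\[
K_Y+\mu_*^{-1}\bigl(D+c(1+\epsilon)H\bigr)+E\;\sim_{\bQ,X}\;\epsilon c\,\mu_*^{-1}H\;\sim_{\bQ,X}\;-\epsilon\bigl(K_Y+\mu_*^{-1}D+E\bigr).
\]
Running the MMP for this pair (which terminates by \cite{BCHM10}, the class being effective) yields a minimal model $Y'$ on which $-(K_{Y'}+\mu'^{-1}_*D+E')$ is nef \emph{by construction}; since every step is crepant for the numerically trivial pair $\bigl(Y,\mu_*^{-1}(D+cH)+E\bigr)$, no $G_i$ is contracted and $Y'$ is still a dlt modification of $(X,D+c\fa)$. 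The missing idea is this: rather than protecting nefness of $-F$ through an unrelated MMP, choose the boundary so that the log canonical class is a positive multiple of the very divisor you want nef, and let the MMP do the work.
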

\begin{proof}Since $(X,D)$ is klt, we know that 
$$K_Y+\mu_*^{-1}D+E\sim_{\mathbb{Q},X} \sum a_iG_i$$
with $G_i$ are all exceptional and $a_i=A_{(X,D)}(G_i)>0$. Running a relative MMP of 
$$(Y,\mu^{-1}_*(D+c\cdot \frac{1}{l}\sum_{j=1}^l D_j)+E-\epsilon \sum a_i G_i) \mbox{ \ \ over $X$}$$ with scaling by an ample divisor, we obtain a relative minimal model $Y\dasharrow Y'$ of 
$$
K_Y+\mu^{-1}_*(D+c\cdot \frac{1}{l}\sum_{j=1}^l D_j)+E \sim_{\bQ, X} - c\cdot \sum m_i G_i+\sum_i a_i G_i=0.
$$
So we have
$$
K_Y+\mu^{-1}_*(D+c\cdot \frac{1}{l}\sum_{j=1}^l D_j)+E-\epsilon \sum a_i G_i=-\epsilon (K_Y+\mu_*^{-1}D+E),
$$
and hence $-K_{Y'}-{\mu'}_*^{-1}D-E'$ is nef over $X$ where $\mu'\colon Y'\to X$ and $E'$ is the birational transform of $E$. Furthermore, since  
$$K_Y+\mu_*^{-1}D+E\sim_{\mathbb{Q},X}-c \cdot\frac{1}{l} \mu_*^{-1} \sum^l_{j=1}D_j,$$
$Y'$ also gives a minimal model of the dlt pair $$\big(Y, \mu_*^{-1}(D+c(1+\epsilon) \cdot\frac{1}{l}\sum^l_{j=1}D_j)+E\big),$$
which implies $(Y', {\mu'}_*^{-1}D+E')$ is a dlt modification of $(X,D+c\cdot\frac{1}{l}\sum^l_{j=1}D_j)$. Therefore, we can replace $ Y$ by $Y'$.
\end{proof}

When $E$ is irreducible, then $\vol_{(X,D),o}(Y)=\hvol_{(X,D),o}(\ord_E)$.  We can generalize Lemma \ref{l-inter} to the dlt case.
\begin{lem}\label{l-inter2}In the setting of Definition \ref{d-modelv}, if we assume that $-K_Y-\mu_*^{-1}D-E$ is nef over $X$. 
Then 
 $$\vol_{(X,D),o}(Y)=\sum_i a_i \big((-K_Y-\mu_*^{-1}D-E)|_{E_i}\big)^{n-1}.$$
\end{lem}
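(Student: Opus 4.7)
The plan is to relate $\vol_{X,o}(Y)$ to a local intersection number on $Y$ that is linear in the components of $E$. Write $L:=-K_Y-\mu_*^{-1}D-E$, nef over $X$ by hypothesis, and $D_0:=\sum_i a_i G_i$, so that $L\sim_{\mathbb{Q},X}-D_0$. For $k$ sufficiently divisible, $kD_0$ is an integral $\mu$-exceptional divisor, and Lemma \ref{l-pushforward} yields
\[
\vol_{X,o}(Y)=\vol^F_o(-D_0)=\limsup_{k\to\infty}\frac{n!\,l_R(\mathcal{O}_X/\fa_k)}{k^n},\qquad \fa_k:=\mu_*\mathcal{O}_Y(-kD_0).
\]

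The substantive step is the local-intersection identity
\[
\vol^F_o(-D_0)=-L^n,
\]
the right-hand side being the $n$-fold local/relative self-intersection (well-defined because $-D_0$ has support in $\mu^{-1}(o)$). This is the local analog of $\vol(L)=L^n$ for nef divisors on projective varieties, and I would prove it by computing $l_R(\mathcal{O}_X/\fa_k)$ as an asymptotic relative Euler characteristic. Concretely, filter $\mathcal{O}_Y/\mathcal{O}_Y(-kD_0)$ by peeling off one Cartier $G_i$ at a time along a monotone lattice path from $\vec{0}$ to $(ka_1,\ldots,ka_r)$; the successive quotients are sheaves of the form $\mathcal{O}_{G_i}(-\sum_l j_l G_l)|_{G_i}$. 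Apply relative Kawamata--Viehweg vanishing for the dlt pair $(Y,\mu_*^{-1}D+E)$—valid because $L$ is relatively nef, and upgraded to nef-and-big by absorbing a small effective exceptional perturbation if needed—to force $R^1\mu_*$ of each intermediate sheaf to vanish, so that the length on $X$ equals the sum of the $h^0$'s of the graded pieces on the $G_i$. Asymptotic Riemann--Roch on each $G_i$ then extracts the leading $k^n$-coefficient, whose value is a polynomial in the intersection numbers $G_i\cdot G_{i_1}\cdots G_{i_{n-1}}$ that reassembles to $-L^n$.

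Once $\vol^F_o(-D_0)=-L^n$ is in hand, the projection formula gives
\[
-L^n=D_0\cdot L^{n-1}=\sum_i a_i\bigl(G_i\cdot L^{n-1}\bigr)=\sum_i a_i\,(L|_{G_i})^{n-1},
\]
which is the desired identity; as a sanity check, when $E=G_1$ is irreducible this specializes to Lemma \ref{l-inter}. The main obstacle lies in the intermediate step $\vol^F_o(-D_0)=-L^n$: one must justify vanishing of higher direct images in the dlt (rather than plt or smooth) setting, and apply asymptotic Riemann--Roch on possibly non-normal divisors $G_i$. I would handle the former by combining a small nef-and-big perturbation with the dlt version of relative Kawamata--Viehweg vanishing, and the latter by passing to the normalization of $G_i$ and invoking the dlt different so that the intersection number $(L|_{G_i})^{n-1}$ is unambiguous.
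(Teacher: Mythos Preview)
Your approach is correct and follows the same strategy as the paper (relative vanishing plus asymptotic Riemann--Roch), but the paper's execution is more economical. Rather than filtering by individual components $G_i$ along a lattice path, the paper clears denominators so that $L:=m(K_Y+\mu_*^{-1}D+E)$ is Cartier and $F:=\sum_i m a_i G_i\sim_X L$ is an effective Cartier divisor, then uses the \emph{single} short exact sequence
\[
0\to\mathcal{O}_Y(-(k{+}1)L)\to\mathcal{O}_Y(-kL)\to\mathcal{O}_F(-kL)\to 0
\]
together with $R^1\mu_*\mathcal{O}_Y(-(k{+}1)L)=0$ (since $-L$ is $\mu$-nef) to obtain $\vol_o^F(-L)=\vol(-L|_F)=(-L)^{n-1}\cdot F$, and then divides by $m^n$. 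This sidesteps the three obstacles you flag: only $L$ (not each $G_i$) needs to be Cartier, only one vanishing statement is required, and asymptotic Riemann--Roch is applied once on the scheme $F$ rather than on the possibly non-normal $G_i$ separately, so no reassembly of intersection numbers or passage to normalizations is needed.
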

\begin{proof}Let $m$ be sufficiently divisible such that $L:=m(K_Y+\mu_*^{-1}D+E)$ is Cartier. Denote by $F$ the effective Cartier divisor $F:=\sum_i ma_iG_i$. 
Then 
$$0\to \mathcal{O}_Y(-(k+1) L)\to \mathcal{O}_Y(-k L)\to \mathcal{O}_F(-k L)\to  0.$$
Since $-L$ is nef, we know that $R^1\mu_*(\cO_Y(-(k+1)L))=0$.
Thus 
$$\vol_o^F(L)=\vol(L|_{F}),$$
and then we conclude by dividing $m^n$ in both sides. 
\end{proof}

\begin{lem}\label{l-model1}
Let $\fa$ be an $\fm$-primary ideal. Denote $c=\lct(X,D;\fa)$ and let $(Y,E)\to X$ be a dlt modification of $ (X,D+c\cdot \fa)$. Then
$$\vol_{(X,D),o}(Y)\le \lct^n(\fa )\cdot\mult(\fa ).$$
\end{lem}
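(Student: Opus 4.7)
The plan is to rewrite both sides of the desired inequality as local volumes of purely exceptional divisors, and then obtain the inequality from a one-line containment of ideals.

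First I would extract the key numerical identity that the dlt modification forces. From $K_Y + D_Y = \mu^*(K_X+D)$ one has $D_Y = \mu_*^{-1}D - \sum_i (a_i-1) G_i$ where $a_i = A_{X,D}(G_i)$. Plugging this into the relation $D_Y + c \sum_i m_i G_i = \mu_*^{-1}D + E$ and matching coefficients along each exceptional $G_i$ (recall $\fa$ is $\fm$-primary, so the $G_i$ are exceptional and $E$ has coefficient $1$ at $G_i$) gives $1-a_i + c\, m_i = 1$, i.e.\ $a_i = c\, m_i$. Consequently
$$-K_Y - E - \mu_*^{-1}D \;=\; -\mu^*(K_X+D) - \sum_i a_i G_i,$$
and because $\mu^*(K_X+D)$ is pulled back from $X$ (locally trivial at $o$ after sufficient scaling, by the projection formula applied to $f_*$), the local volume at $o$ depends only on the exceptional summand. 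Hence
$$\vol_{X,o}(Y) \;=\; \vol^F_o\Bigl(-\sum_i a_i G_i\Bigr) \;=\; c^n \cdot \vol^F_o\Bigl(-\sum_i m_i G_i\Bigr).$$

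Next I would compare $\vol^F_o(-\sum m_i G_i)$ with $\mult(\fa)$ using Lemma~\ref{l-pushforward}. Set $\fb_k := \mu_*\mathcal{O}_Y(-k\sum m_i G_i)$. Since $\mathcal{O}_Y(-\sum m_i G_i)$ is by definition the divisorial part of $\fa\cdot \mathcal{O}_Y$, one has the tautological inclusion $\fa \cdot \mathcal{O}_Y \subseteq \mathcal{O}_Y(-\sum m_i G_i)$; taking $k$-th powers and pushing forward produces
$$\fa^k \;\subseteq\; \mu_*(\fa^k \cdot \mathcal{O}_Y) \;\subseteq\; \fb_k.$$
Thus $l_R(R/\fb_k) \le l_R(R/\fa^k)$ for every $k$, and passing to $\limsup_k (\cdot)/(k^n/n!)$ yields, by Lemma~\ref{l-pushforward}, the bound $\vol^F_o(-\sum m_i G_i) \le \mult(\fa)$. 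Combining with the identity from the previous step produces $\vol_{X,o}(Y) \le c^n\mult(\fa) = \lct^n(\fa)\mult(\fa)$, as desired.

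The argument is quite short and formal; the only real inputs are the coefficient identity $a_i = c\, m_i$ forced by the dlt modification, and the containment $\fa\cdot\mathcal{O}_Y \subseteq \mathcal{O}_Y(-\sum m_i G_i)$ coming from the definition of the divisorial part of $\mu^*\fa$. I do not anticipate a substantive obstacle; the one point requiring a bit of care is the routine bookkeeping needed to pass between $\mathbb{Q}$-Cartier divisors and their Cartier multiples when invoking the definition of $\vol^F_o$ and when discarding the pull-back summand $\mu^*(K_X+D)$ from a local-volume computation at $o$.
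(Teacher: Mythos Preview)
Your proposal is correct and follows essentially the same route as the paper: both derive the coefficient identity $a_i=c\,m_i$ from the dlt modification condition, use the containment $\fa^k\subset \mu_*\cO_Y(-k\sum m_iG_i)=:\fb_k$, and then invoke Lemma~\ref{l-pushforward} to bound $\vol^F_o(-\sum m_iG_i)$ by $\mult(\fa)$. The only cosmetic difference is that you explicitly factor out the $c^n$ via homogeneity, while the paper leaves that implicit in the identification $\vol_{X,o}(Y)=\vol^F_o(-\sum a_iG_i)$.
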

\begin{proof}Write $K_Y+\mu_*^{-1}D+E=\mu^*(K_X+D)+\sum_i a_iG_i,$ where $E$ is the reduced divisor on ${\rm Ex}(\mu)$.  If we denote the vanishing order of $\mu^* \fa$ along $G_i$ by $m_i$, then since  $c$ is the log canonical threshold and for every $i$, $G_i$ computes the log canonical threshold,  we know that $c\cdot m_i=a_i$. Thus
$$\fa^k\subset \mu_*\mathcal{O}_Y(-\sum_i km_iG_i)=_{\rm def} \fb_k.$$
It suffices to show that 
$$\mult(\fb_{\bullet})=\vol_o^F (-\sum m_iG_i).$$
But this follows from Lemma \ref{l-pushforward}.
\end{proof}
\begin{lem}\label{l-kollar}
With the same assumptions as in Lemma \ref{l-model1}, there exists a Koll\'ar component $S$, such that 
$$\hvol_{(X,D),o}(\ord_S)\le \vol_{(X,D),o} (Y)\le \lct^n(\fa)\cdot \mult(\fa).$$ 
\end{lem}
\begin{proof}It follows from Proposition \ref{p-special} that we can choose a model $W\to Y$ and  run MMP to obtain $W\dasharrow Y'$, such that $\mu'\colon Y'\to X$ gives a Koll\'ar component $S$ with $a(S; Y,E+\mu_*^{-1}D)=-1$. 
If we fix a common resolution $p\colon W'\to Y $ and $q\colon W'\to Y'$, then since  $-(K_Y+E+\mu_*^{-1}D)$ is nef and $A_{Y, E+\mu_*^{-1}D}(S)=0$, 
we know $-p^*(K_Y+E+\mu_*^{-1}D)+q^*(K_{Y'}+S+\mu'^{-1}_*D)$ is $q$-nef and $q$-exceptional. By the negativity lemma, we get
$$p^*(K_Y+E+\mu_*^{-1}D)\ge q^*(K_{Y'}+S+\mu'^{-1}_*D).$$
Thus 
$$\hvol(\ord_S)=\vol(-K_{Y'}-S-\mu'^{-1}_*D)\le \vol(-K_Y-E-\mu_*^{-1}D)=\vol(Y).$$
\end{proof}

\subsection{Approximating by Koll\'ar components}\label{sec-appKol}

With the above discussions, we can start to prove our theorems.
\begin{proof}[Proof of Theorem \ref{t-approx}]
By Proposition \ref{p-inf}, we know 
$$\inf_{v} \hvol_{(X,D),o}(v)=\inf_{\fa}\lct^n(\fa)\cdot\mult(\fa).$$
By the above construction in Lemma \ref{l-model1} and \ref{l-kollar}, for any $\fm$-primary ideal $\fa$, we know that there exists a Koll\'ar component $S$, 
such that 
$$\hvol(\ord_S)\le \lct^n(\fa)\cdot\mult(\fa). $$  

Let $\{\fa_k\}_{k\in \Phi}$ be the associated graded family of valuation ideals induced by $v^{\rm m}$ where $\Phi\subset \bR$ is the value semigroup. For each $\fa_k$ ($k\in \Phi$), we denote 
$$c_k:=\lct(X,D; \fa_k).$$
Let $\mu_k\colon Y_k\to X$ be a dlt modification of $(X,D; c_k\cdot \fa_k)$ and $E_k$ the exceptional divisor of $Y_k$ over $X$. Assume the model we obtain from Lemma \ref{l-kollar} is $Y'_k$ with the Koll\'ar component $S_k$.
 
We consider the valuation 
$$v_k:=\frac{c_k\cdot k}{A_{(X,D)}(S_k)}\ord_{S_k}.$$ 
Note that $A_{(X,D)}(v_k)=c_k\cdot k$ is uniformly bounded: 
$$c_k\cdot k=\lct (X,D; \frac{1}{k}\fa_k)=\inf_{v'}\frac{A_{(X,D)}(v')}{\frac{1}{k}v'(\fa_k)}\le A_{(X,D)}(v^{\rm m}) <\infty.$$
So by the Izumi type estimate in \cite[Theorem 1.2]{Li15a}, we know that 
$$v_k(\fm)\ord_{o}\le v_k \le cA_{(X,D)}(v_k) \cdot \ord_{o}\le c'\cdot \ord_o,$$
for some positive constant $c, c'$ and all $k$. By \cite[Theorem 1.1]{Li15a} and the fact that $\hvol(v_k)$ is bounded from above, we know that $v_k(\fm)$ is bounded from below. In particular, by the compactness result \cite[Proposition 5.9]{JM12} and Proposition \ref{p-sequcom}, we know that there is an infinite sequence $\{v_{k_i}\}_{k_i\in \Phi}$ with $k_i\to +\infty$ which has a limit in $\Val_{X,o}$.
which we denote by
$$v'=\lim_{i\to \infty} v_{k_i}.$$ 
Then we know that 
$$A_{(X,D)}(v')\le \liminf_{i\to \infty} A_{(X,D)}(v_{k_i})=\liminf_{i\to \infty} c_{k_i}\cdot {k_i} \le A_{(X,D)}(v^{\rm m})$$ as $A_{(X,D)}$ is lower semicontinuous (see \cite[Lemma 5.7]{JM12}). We claim for any $f$, we have
$$v'(f)\ge v^{\rm m}(f).$$
Assuming this is true, then we $\vol(v')\le \vol(v^{\rm m})$, which then implies $\hvol(v')\le \hvol(v^{\rm m})$. Because $v^{\rm m}$ is a minimizer of $\hvol$, by Proposition \ref{p-valuation}, we must have
$v'=v^{\rm m}$.

To verify the claim, we pick any $f\in R$ and let $v^{\rm m}(f)= p$. For a fixed $k_j$, choose $l$ such that 
$$(l-1)p< k_j\le lp .$$ Let $k=k_j$ in the previous construction. Then we have: 
\begin{eqnarray*}
 v^{\rm m}(f)= p&\Longrightarrow & v^{\rm m}(f^l)= pl,\\
&\Longrightarrow & f^l\in \fa_{pl},\\
&\Longrightarrow & f^l\in \fa_{k_j},\\
&\Longrightarrow &l\cdot  \ord_{E_i}(f)\ge m_{k_j,i} \mbox{\ \  for any $i$},\\
&\Longrightarrow& l\cdot  \ord_{S_{k_j}}(f)\ge A_{(X,D)}(S_{k_j})\cdot \frac{1}{c_{k_j}},\\
&\Longrightarrow&v_{k_j}(f)\ge \frac{k_j}{l}>p-\frac{p}{l}.
\end{eqnarray*}
The fourth arrow is because if $f^l\in \fa_{k_j}$, then $f^l$ vanishes along $m_{k_j,i}G_{k_j,i}$;  and
the  fifth arrow is because  that
$$K_{Y_{k_j}}+\mu_{k_j*}^{-1}D+E_{k_j}\sim_{\mathbb{Q},X} c_{k_j}\cdot \sum m_{k_j,i}G_{k_j,i},$$
and the pull back of $K_{Y_{k_j}}+\mu_{k_j*}^{-1}D+E_{k_j}$ is larger than the one from 
$$K_{Y'_{k_j}}+\mu_{k_j*}^{'-1}D+S_{k_j}\sim_{\mathbb{Q},X} A_{(X,D)}(S_{k_j})S_{k_j}.$$
Thus $v'(f)=\lim v_{k_j}(f)\ge p= v^{\rm m}(f)$.
\end{proof}
\begin{prop}\label{p-sequcom} Let $o\in (X,D)$ be a klt singularity. Let $a$ and $b$ be two positive numbers. Then the subset $K_{a,b}$ of $\Val_{X,o}$ which consists of all valuations with 
$$a\le v(\fm) \qquad \mbox{and} \qquad A_{(X,D)}(v)\le b$$
 is sequential compact. 
\end{prop}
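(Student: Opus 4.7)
The plan is to realize $K_{a,b}$ as a closed subset of a sequentially compact subset of $\Val_{X,o}$ (in the weak topology of \cite{JM12}) and then conclude via closedness under limits.

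The first step is to produce a uniform two-sided bound $a \cdot \ord_o \le v \le C \cdot \ord_o$ for every $v \in K_{a,b}$, where $C = C(a,b,X,D,o)$ is a positive constant. The lower bound is immediate: for $f \in \fm^k$, writing $f$ as a sum of $k$-fold products of elements of $\fm$ gives $v(f) \ge k \cdot v(\fm) \ge a \cdot \ord_o(f)$. The upper bound is where the klt hypothesis is used essentially: the Izumi-type estimate of \cite[Proposition 1.2]{Li15a} furnishes, uniformly in $v$ with $A_{X,D}(v) \le b$ and $v(\fm) \ge a$, a constant $C$ such that $v \le C \cdot \ord_o$.

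Next I would appeal to \cite[Proposition 5.9]{JM12}: the subset of $\Val_{X,o}$ consisting of valuations sandwiched between two positive multiples of $\ord_o$ is sequentially compact in the weak topology. Intuitively, such a bounded valuation is determined by the flag it induces on each finite-dimensional quotient $R/\fm^N$ ($N \in \bN$), which yields a metrizable structure; any sequence in the bounded set then admits a subsequence converging pointwise on $R$, whose limit satisfies the valuation axioms by passage to the limit.

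Finally, inside this sequentially compact set, $K_{a,b}$ is cut out by two closed conditions. The condition $v(\fm) \ge a$ is closed because, for any finite generating set $\{g_i\}$ of $\fm$, the function $v \mapsto v(\fm) = \min_i v(g_i)$ is continuous in the weak topology. The condition $A_{X,D}(v) \le b$ is closed by the lower semi-continuity of $A_{X,D}$ established in \cite[Lemma 5.7]{JM12}. Hence $K_{a,b}$ is a closed subset of a sequentially compact set, and is therefore sequentially compact. The principal obstacle is the uniform upper bound $v \le C \cdot \ord_o$: the Izumi-type estimate is precisely what fails for non-klt singularities, and the klt hypothesis must be used crucially at this point.
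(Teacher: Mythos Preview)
Your argument has a genuine gap at the second step. \cite[Proposition 5.9]{JM12} gives \emph{compactness} of the set of valuations sandwiched between two multiples of $\ord_o$, not sequential compactness. Over $\bC$ the weak topology on $\Val_{X,o}$ is not first-countable, so compactness does not automatically upgrade. Your heuristic that a bounded valuation is ``determined by the flag it induces on each $R/\fm^N$'' is false: for instance on $\bC[x,y]$ the divisorial valuations $v_c$ centered at the point $[1:c]$ of the exceptional $\bP^1$ all satisfy $v_c(x)=v_c(y)=1$, yet $v_c(y-cx)=2$ while $v_{c'}(y-cx)=1$ for $c'\neq c$. Thus the weak topology genuinely depends on the values at uncountably many elements of $R$, and the Tychonoff cube you are implicitly sitting inside is indexed by an uncountable set --- compact, but not sequentially compact.

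This is exactly the issue the paper's proof is designed to handle. The paper descends the data to a countably generated subfield $F\subset\bC$: one finds $R_F$ with $R=R_F\otimes_F\bC$ so that all the valuative ideals $\fa_{i,k}$ are defined over $F$, restricts each $v_i$ to $R_F$, and applies \cite[Proposition 5.9]{JM12} together with \cite{HLP12} (Berkovich spaces over countable fields embed in Euclidean space, hence are metrizable) to extract a convergent subsequence $(v_i)_F\to (v_\infty)_F$ over $F$. The limit is then extended back to $\bC$ via $v_\infty=(v_\infty)_F\otimes_F\bC$, and one checks that $v_i\to v_\infty$ in $\Val_{X,o}$. Your Izumi bound and the closedness of the two defining conditions are correct and useful observations, but they do not by themselves circumvent the metrizability problem; the countable-field descent is the missing idea.
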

\begin{proof} Let $\{v_i\}$ be a sequence contained in $K$. Let $\{\fa_{i,k}\}$ be its associated graded sequence of valuative ideals for $k\in \Phi_i$. We can find a countably generated field $F\subset \mathbb{C}$, such that $R={\rm Spec}(R_F)\times_F \mathbb{C}$ for some finitely generated $F$-algebra $R_F$ and $D$, $o$ are defined over $F$. Furthermore, we can assume for each pair $(i,k)$, ${\fa_{i,k}}=({\fa_{i,k}})_F\times_F {\mathbb{C}}$, for some ideal $({\fa_{i,k}})_F\subset R_F$. Denote by $X_F:={\rm Spec}(R_F)$ and $D_F$ the divisor of $D$ descending on $X_F$.

Now let $(v_i)_F$ be the restriction of $v_i$ on $R_F$. By our definition, we know that
$$\fa_{i,k}=\{f\in R_F \ | (v_i)_F(f)\ge k\},$$
and  $(v_i)_F\in (K_{a,b})_F$ where $(K_{a,b})_F$ is defined for all $v\in \Val_{X_F,o}$ with  $a\le v(\fm_F)$ and $A_{X_F,D_F}(v)\le b$. By \cite[Theorem 1.1]{HLP12}, $\Val_{X_F,o}$ has the same topology as a set of some Euclidean space, thus $(K_{a,b})_F$ is sequential compact as it is compact by \cite[Proposition 5.9]{JM12}. Therefore after passing through a subsequence, $(v_i)_F$ has a limit $(v_{\infty})_F$, which can be extended to a valuation $v_{\infty}:=(v_{\infty})_F\otimes \mathbb{C}$. In fact, $v_{\infty}$ is defined as follows: for any $f\in R$ it can be written $f=\sum^m_{j=1}f_j\otimes_F h_j$ such that $0\neq f_j\in R$ and $h_1,...,h_m\in \mathbb{C}$ are linearly independent over $F$, then 
$$v_{\infty}(f)=\min^m_{j=1} \ (v_{\infty})_F(f_j).$$ We claim $v_i=(v_i|_{R_F})\otimes_F\mathbb{C}$. In fact, for any $f$, if $v_i(f)=k$, then $f\in \fa_{i,k}=(\fa_{i,k})_F\otimes_F \mathbb{C}$, thus $(v_i|_{R_F})\otimes_F\mathbb{C}(f)=k$. 

To see that for any $f$, $v_{\infty}(f)=\lim v_i(f)$, we know for some $j$, 
$$v_{\infty}(f)=(v_{\infty})_F(f_j)=\lim_{i}(v_i|_{R_F})(f_j)\ge \limsup_i v_i(f). $$
For another direction,  if we have a subsequence of $i$, such that $\lim_i v_i(f)<v_{\infty}(f)$, after passing to a subsequence again, we can find a $j$,  such that  $$\lim_iv_i(f)=\lim_iv_i(f_j)=\lim_i (v_{\infty})_F(f_j)\ge v_{\infty}(f), $$
a contradiction. 
\end{proof}
\begin{rem}
A referee pointed out that the sequential compactness of Berkovich space was studied in \cite{Poi13}. The above result could also be derived from this work.
\end{rem}

For a general klt singularity $(X,o)$, the minimum is not always achieved by a Koll\'ar component (see \cite{Blu16b, LX17}). Thus we have to take a limiting process.  However, if the minimizer $v$  is divisorial, then it should always yield a Koll\'ar component. First we have the following result inspired by the work in \cite{Blu16a} (we note that it is also independently obtained in \cite{Blu16b}).

\begin{lem}\label{l-lcplace}
If $\ord_E\in \Val_{X,o}$ minimizes $\hvol_{(X,D)}$, then the Rees algebra associated to $\ord_E$ is finitely generated. 
\end{lem}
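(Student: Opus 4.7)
The plan is to extract finite generation from the equality case of Proposition~\ref{p-inf}. Define $\fa_k := \fa_k(\ord_E)$ and $c_k := \lct(X,D;\fa_k)$. Applied to $v=\ord_E$, the general inequality $c_k\cdot v(\fa_k)\le A_{X,D}(v)$ gives $c_k\cdot k\le A_{X,D}(\ord_E)$ — with equality precisely when $\ord_E$ is a log canonical place of $(X,D+c_k\fa_k)$. Proposition~\ref{p-inf} together with the minimizer property yields
\[
\hvol(\ord_E)\;\le\;c_k^n\cdot\mult(\fa_k)\;\le\;A_{X,D}(\ord_E)^n\cdot\mult(\fa_k)/k^n,
\]
and since $\mult(\fa_k)/k^n\to\vol(\ord_E)$, the outer terms both tend to $\hvol(\ord_E)$, forcing $\lim_k c_k\cdot k=A_{X,D}(\ord_E)$ and $\lim_k c_k^n\mult(\fa_k)=\hvol(\ord_E)$.

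The heart of the proof is to upgrade the asymptotic equality $c_k\cdot k\to A_{X,D}(\ord_E)$ to exact equality for some $k$, i.e., to exhibit $\ord_E$ as a log canonical place of $(X,D+c_k\fa_k)$ for that $k$. For each $k$, I would take a dlt modification $\mu_k\colon Y_k\to X$ of $(X,D+c_k\fa_k)$ with $-K_{Y_k}-E_k-(\mu_k)_*^{-1}D$ nef over $X$ (Lemma~\ref{l-gooddlt}), then apply Lemma~\ref{l-kollar} to produce Koll\'ar components $S_k$ satisfying $A_{X,D}(S_k)=c_k\cdot\ord_{S_k}(\fa_k)$ and $\hvol(\ord_{S_k})\le c_k^n\mult(\fa_k)\to\hvol(\ord_E)$. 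After rescaling $v_k:=(A_{X,D}(\ord_E)/A_{X,D}(S_k))\cdot\ord_{S_k}$ (so that $A_{X,D}(v_k)=A_{X,D}(\ord_E)$ and $\vol(v_k)\to\vol(\ord_E)$), compactness (Proposition~\ref{p-sequcom}) together with the semicontinuity of $A_{X,D}$ and Proposition~\ref{p-upperconti} furnishes a subsequential limit $v_\infty$ that is itself a minimizer. Tracing through the construction: for $f\in R$ with $\ord_E(f)=m$ and $l=\lceil k/m\rceil$, one has $f^l\in\fa_{ml}\subseteq\fa_k$, so $\ord_{S_k}(f)\ge A_{X,D}(S_k)/(l\,c_k)$, and hence $v_k(f)\ge A_{X,D}(\ord_E)\cdot m/(k\,c_k)\to m=\ord_E(f)$ as $k\to\infty$. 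Consequently $v_\infty(f)\ge\ord_E(f)$ for every $f$, and Proposition~\ref{p-valuation} (in view of $\vol(v_\infty)=\vol(\ord_E)$) identifies $v_\infty$ with $\ord_E$.

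Once $\ord_E$ is identified as a topological limit of the rescaled Koll\'ar-component valuations $v_{k_j}$, a boundedness argument should force actual stabilization, namely $S_{k_j}=E$ for large $j$ along the subsequence. Then $\ord_E$ is an lc place of $(X,D+c_{k_j}\fa_{k_j})$, and Proposition~\ref{p-special} combined with further MMP produces a birational morphism $\mu\colon Y\to X$ extracting only $E$ with $-E$ being $\mu$-ample. Therefore $\bigoplus_k\fa_k(\ord_E)=\bigoplus_k\mu_*\cO_Y(-\lceil k/b\rceil E)$ for an appropriate positive integer $b$, which is finitely generated. The main obstacle I anticipate is this final stabilization step: converting the topological convergence $v_{k_j}\to\ord_E$ into the discrete identification $S_{k_j}=E$. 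This will require uniform control on the family $\{S_k\}$ — presumably a boundedness principle for divisors with uniformly bounded log discrepancy and normalized volume, applied to Koll\'ar components arising on dlt modifications of the convergent family of pairs $(X,D+c_k\fa_k)$.
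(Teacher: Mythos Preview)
Your proposal has a genuine gap, and you have correctly identified it yourself: the ``stabilization step'' that would force $S_{k_j}=E$ for large $j$ is not justified, and there is no evident boundedness principle in the paper (or elsewhere) that delivers it. Topological convergence of rescaled divisorial valuations to $\ord_E$ does not by itself imply that the underlying prime divisors eventually coincide with $E$; indeed, the whole point of the approximation results in Section~3 is that limits of Koll\'ar components need not be Koll\'ar components.

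The paper's proof bypasses this difficulty entirely with a much shorter argument. You set yourself the goal of showing that $\ord_E$ is an \emph{lc place} of $(X,D+c_k\fa_k)$ for some $k$, i.e.\ exact equality $c_k\cdot k=A_{X,D}(\ord_E)$. But this is stronger than necessary. The asymptotic equality $\lct(X,D;\fa_\bullet)=A_{X,D}(\ord_E)=:c$ (which you did establish) already says that $\ord_E$ computes the lct of the graded sequence, so $a(E;X,D+c\cdot\fa_\bullet)=-1$. Now perturb: for small $\epsilon>0$ one has $a(E;X,D+(1-\epsilon)c\cdot\fa_\bullet)\in(-1,0)$, and since $m\cdot\lct(X,D;\fa_m)\to c$, for $m\gg 0$ the pair $(X,D+\frac{(1-\epsilon)c}{m}\fa_m)$ is klt while $a(E;X,D+\frac{(1-\epsilon)c}{m}\fa_m)<0$. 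Replacing $\fa_m$ by a general $\bQ$-divisor, \cite{BCHM10} then directly produces a model $\mu\colon Y\to X$ with ${\rm Ex}(\mu)=E$ and $-E$ $\mu$-ample, and finite generation of the Rees algebra follows immediately. No Koll\'ar components, no compactness, no stabilization.
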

\begin{proof}If we let $\{\fa_{\bullet}\}$ be the graded valuative ideas  associated to $\ord_E$, then we know that 
\begin{eqnarray*}
\hvol(\ord_{E})&= &\lim_{k\to \infty}A_{(X,D)}(\ord_E)^n \cdot \frac{\mult(\fa_k)}{k^n}\\
 &\ge& \lim_{k\to \infty}\lct (X,D;\fa_k)^n \cdot \mult(\fa_k)\\
 & \ge&\hvol(\ord_{E})
\end{eqnarray*}
by Proposition \ref{p-inf} and our assumption that $\ord_E$ is a minimizer of $\hvol_{(X,D)}$. So we conclude that (see \cite{Mus02})
$$\lct(X,D;\fa_{\bullet}):=\lim_{k\to \infty} k\cdot \lct (X,D;\fa_k)=A_{(X,D)}(\ord_E),$$ 
which we denote by $c$. Therefore,  we can choose $\epsilon$ sufficiently small,  such that the discrepancy $a(E;X,D+(1-\epsilon)c\cdot \fa_{\bullet})\in (-1,0),$

On the other hand, we know 
$$\lct(X,D;\fa_{\bullet})=\lim_{m\to \infty} m\cdot \lct(X,D;\fa_{m}).$$ So for sufficiently large $m$, we know that for all $G$, the discrepancy
$$a(G;X, D+\frac{1}{m}(1-\epsilon)c\cdot\fa_m)>-1.$$ 
We also have $a(E;X, D+\frac{1}{m}(1-\epsilon)c\cdot\fa_m)<0$.
Then similar to the discussion in \ref{d-dlt}, we can find a $\mathbb{Q}$-divisor $\Delta$, such that
$(X, D+\frac{1}{m}(1-\epsilon)c \cdot \Delta)$ is klt and $a(E;X, D+\frac{1}{m}(1-\epsilon)c\cdot \Delta)<0$. As a consequence we can apply \cite{BCHM10} to obtain a model $\mu\colon Y\to X$ such that ${\rm Ex}(\mu)=E$ and $-E$ is $\mu$-ample, which implies the finite generation. 
\end{proof}

\begin{proof}[Proof of Theorem \ref{t-divisor}] Applying Lemma \ref{l-lcplace},  the assumption in Case 1 which says $v$ is a divisorial valuation implies the assumption in Case 2, thus we only need to treat the Case 2.

\bigskip

By the proof of Proposition \ref{p-inf}, 
\begin{eqnarray*}
A_{(X,D)}(v)^n \cdot \frac{\mult(\fa_k)}{k^n}&\ge & \left(\frac{A_{(X,D)}(v)}{v(\fa_k)}\right)^n \cdot \mult(\fa_k)\ge \lct^n(X,D; \fa_k)\cdot \mult(\fa_k).
\end{eqnarray*}
By the finite generation assumption, we know that $\fa_{kl}=\fa^l_{k}$ for sufficiently divisible $k$ and any $l$.
So replace $k$ by $kl$ in the above display and let $l\to +\infty$, we know that
$$\hvol_{(X,D),o}(v)\ge \lct^n(X,D; \fa_k)\cdot \mult(\fa_k)\ge  \hvol_{(X,D),o}(v).$$
Take $\mu\colon Y\to X$ to be the dlt modification of $(X,D+\lct(X,D, \fa_k)\cdot \fa_k)$ as given in Lemma \ref{l-gooddlt}. The above discussion then implies that
$$ \lct^n(X,D; \fa_k)\cdot \mult(\fa_k)=  \hvol_{(X,D),o}(v)=\vol_{(X,D),o}(Y).$$

Moreover, it follows from Proposition \ref{p-special}, that we can choose a model $W\to Y$ and  running MMP to obtain $W\dasharrow Y'$, such that $\mu'\colon Y'\to X$ gives a Koll\'ar component $S$ with $a(S; Y,\mu^{-1}D_*+E)=-1$. We only need to show that if $Y'$ and $Y$ are not isomorphic in codimension 1, then 
$$\vol_{(X,D),o}(Y')<\vol_{(X,D),o}(Y).$$

This is the the local analog of the argument in \cite[Proposition 5]{LX14}. We give the details for the reader's convenience. 

Let $\pi\colon Y\to Y^{\rm c}$ be the canonical model of $-K_Y-{\mu}_*^{-1}D-E$ over $X$, which exists because 
$$-\epsilon (K_Y+{\mu}_*^{-1}D+E)\sim_{\mathbb{Q},X}K_Y+{\mu}_*^{-1}(D+c\cdot\frac{1}{l}\sum D_j)+E-\epsilon \sum_i A_{(X,D)}(G_i)G_i$$
is a klt pair for $\epsilon$ sufficiently small. The assumption that $Y'$ and $Y$ are not isomorphic in codimension 1 implies $Y^{\rm c}\neq Y$.

Take $p\colon \hat{Y}\to Y$ and $q\colon \hat{Y}\to Y'$ a common log resolution, and write
$$p^*(K_Y+{\mu}_*^{-1}D+E)=q^*(K_{Y'}+{\mu'}_*^{-1}D+S)+G.$$
 By negativity lemma (cf. \cite[3.39]{KM98}), we conclude that $G\ge 0$. Since 
 $$K_Y+{\mu}_*^{-1}D+E\sim_{\mathbb{Q},X}\sum_i A_{(X,D)}(G_i)G_i $$
 and
  $$K_{Y'}+{\mu'}_*^{-1}D+S\sim_{\mathbb{Q},X} A_{(X,D)}(S)\cdot S,$$
 we know that 
 $$p^*(\sum_iA_{(X,D)}(G_i)G_i)=q^*(A_{(X,D)}(S)\cdot S)+G.$$
 
  For $0\le \lambda \le 1$, let 
  $$L_{\lambda}=q^*(A_{(X,D)}(S)\cdot S)+\lambda G=\sum_{i} b_i(\lambda) F_i,$$
  where $F_i$ runs over all divisor supports on $\hat{Y}_{o}:=\hat{Y}\times_X\{o\}$, and $-L_{\lambda}|_{\hat{Y}_o}$ is nef.
  Define 
$$f(\lambda) = \sum_i b_i(\lambda)(-L_{\lambda}|_{F_i})^{n-1},$$
thus $f(\lambda)$ is non-decreasing as $G\ge 0$. By Lemma \ref{l-inter} and \ref{l-inter2}, we know that
$$f(1)=\vol_{(X,D),o}(Y)\qquad\mbox{and}\qquad f(0)=\vol_{(X,D),o}(Y').$$

Since $Y\dasharrow Y'$ are not isomorphic incodimension 1, it must contract some component $G_1$ of $E$, and the coefficient of $G_1$ in $G$ is
$$a:=A_{(Y',{\mu'}_*^{-1}D+S)}(G_1)>0.
$$
Then
\begin{eqnarray*}
\frac{df (\lambda)}{d \lambda}|_{\lambda=1}&=& n\cdot G\cdot \big(-p^*(K_Y+{\mu}_*^{-1}D+E)\big)^{n-1}\\
&\ge&n \cdot aG_1 \cdot \big(-\pi_*(K_Y+{\mu}_*^{-1}D+E)\big)^{n-1}\\
&>&0.
\end{eqnarray*}
Thus $\vol_{(X,D),o}(Y')=f(0)<f(1)=\vol_{(X,D),o}(Y)$.
\end{proof}

With all these discussions, we also obtain the following result, which characterizes the equality condition in Proposition \ref{p-inf} and is a corresponding generalization of \cite[Theorem 1.4]{dFEM04} (see Remark \ref{r-sharp}) for smooth point. See \cite[9.6]{LazII} for more background. 

\begin{thm}\label{p-equality}
Let $(X,o)=(\Spec (R),\fm)$. Assume $(X,D)$ is a klt singularity for a $\mathbb{Q}$-divisor $D\ge 0$.
Then there exists an $\fm$-primary ideal $\fa$ that obtains the minimum of normalized volume, i.e. 
 $$\lct^n(X,D;\fa)\cdot \mult (\fa)=\inf_{v\in \Val_{X,o}}\hvol(v),$$
 if and only if there exists a Koll\'ar component $S$ that satisfies the following two conditions:
 \begin{enumerate}
 \item[(1)] $\ord_S$ computes both $\lct(X,D; \fa)$ and $\inf_{v\in \Val_{X,o}}\hvol(v)$.
 \item[(2)] There exists a positive integer $k$ such that 
the only associated Rees valuation of $\fa^k$ is $\ord_S$.  
\end{enumerate}
\end{thm}
Later we will verify Theorem \ref{t-main2} which says such a minimizing Koll\'ar component $S$ is unique. 
\begin{proof}By the argument in Theorem \ref{t-divisor}, we see that 
$$\lct^n(X,D;\fa)\cdot \mult (\fa)$$
reaches the minimum of $\hvol_{(X,D),o}$ if and only if there is a dlt modification $\mu: Y\to X$ of 
$$(X,D; c\cdot \fa) \qquad \mbox {where $c=\lct(X,D; \fa)$} $$
that only extracts a Koll\'ar component $S$ of $(X,D)$ such that $\ord_S$ is a minimizer of $\hvol_{(X,D),o}$. 

Now we fix such an ideal $\fa$ and Koll\'{a}r component $S$. 
Assume that $\mu^*\fa$ has vanishing order $m$ along $S$. Since $S$ is $\mathbb{Q}$-Cartier, we can choose a positive integer $k$ such that $mk S$ is Cartier. We claim that
$$ \mu^*(\fa^k)=\mathcal{O}_Y(-mkS).$$
Granted this for now, then we know that $Y$ coincides with the normalized blow up $X^{+}\to X$ of $\fa^k$, i.e., $S$ is the only associated Rees valuation for $\fa^k$.

 To verify the claim, since $-mkS$ is Cartier, we know that 
 $$ \mu^*(\fa^k)=\fc\cdot \mathcal{O}_Y(-mkS)\qquad \mbox{for some ideal $\fc\subset \mathcal{O}_Y$},$$
 and we aim to show that $\fc$ is indeed trivial. If not, we take a normalized blow up $\phi\colon Y^+\to Y$ of $\fc$, so $\phi^*{\fc}=\mathcal{O}_{Y^+}(-E)$ for some effective Cartier divisor $E$. Since $-S$ is ample over $X$, we can  choose $l$ sufficiently big, such that 
 $$-D:=-\phi^*(mklS)-E$$ on $Y^+$ is ample over $X$. 
 
 Since 
 $$(\mu\circ \phi)^*\fa^{kl}=\mathcal{O}_{Y^+}(-\phi^*(mklS)-lE)\subset \mathcal{O}_{Y^+}(-\phi^*(mklS)-E),$$
 we know that 
 \begin{eqnarray*}
 \mult(\fa^{kl})&\ge & \vol^{F}_{o}(-\phi^*(mklS)-E)\\
  &= & \vol^{F}_o(-D)\\
  &=& mkl (-D|_{\phi^*S})^{n-1}+ (-D|_{E})^{n-1}\\
  &>&  mkl (mkl(-S)|_{S})^{n-1}\\
  &=&(mkl)^n\vol(\ord_S).
\end{eqnarray*}
Since $ \lct(X,D;\fa)=\frac{1}{m}\cdot A_{(X,D)}(\ord_S)$, we can easily see the above inequality is  contradictory to the assumption that 
$$\lct^n(X,D;\fa)\cdot \mult (\fa)=\hvol(\ord_S).$$
Here the inequality in the fourth row comes from a similar but easier calculation as in the proof of of Theorem \ref{t-divisor}. 
\bigskip

For the converse direction, we assume conditions (1)-(2) hold. 
We assume that $\ord_S(\fa)=m$ and that for some integer $k$ the only associated Rees valuation of $\fa^k$ is $\ord_{S}$, i.e., the normalized blow up of $\fa^k$, denoted by $\mu\colon X^{+}\to X$,  has the property that $\mu^*(\fa^k)=\mathcal{O}_{X^+}(-mkS)$. Then the valuative ideal 
\begin{equation}\label{eq-akint}
\fa_{mkl}(\ord_S)=\{f\in R; \ord_S(f)\ge mkl\}=\mu_*(\mu^*(\fa^k)^l)=\overline{\fa^{kl}},
\end{equation}
where $\overline{\fa^{kl}}$ means the integral closure of $\fa^{kl}$. By assumption, $\lct(X,D;\fa)=\frac{A_{(X,D)}(S)}{m}.$ 
We claim that 
$$\mult(\fa^k)=\lim_{k\to +\infty}\frac{n!\cdot l_R(R/\overline{\fa^{kl}})}{l^n} ,$$
and this together with \eqref{eq-akint} implies that 
\begin{eqnarray*}
\lct(X, D; \fa)^n\cdot \mult(\fa)&=&\lct(X,D;\fa^k)^n\cdot \mult(\fa^k)\\
&=&\frac{A_{(X,D)}(S)^n}{m^n}\lim_{l\rightarrow+\infty}\frac{n!\cdot l_R(R/\fa_{ml}(\ord_S)}{l^n}\\
&=&\hvol(\ord_S)=\inf_v\hvol(v).
\end{eqnarray*}

To verify the claim,  if we denote by $\mathcal{J}(\fa^{kl})=\mathcal{J}(X,D;\fa^{kl})$ the multiplier ideal, then we know that 
$$\mult(\fa^k)=\lim_{k\to +\infty}\frac{n!\cdot l_R(R/\mathcal{J}(\fa^{kl}) )}{l^n} ,$$
by the local Skoda Theorem \cite[9.6.39]{LazII}. On the other hand, since $(X,D)$ is klt, we have 
$$\fa^{kl}\subseteq \overline{\fa^{kl}} \subseteq \mathcal{J}(\fa^{kl}). $$
Thus we have 
\begin{eqnarray*}
\mult(\fa^k)&=&\lim_{l\rightarrow+\infty}\frac{n!\cdot l_R(R/\fa^{kl})}{l^n}\ge \lim_{l\rightarrow+\infty}\frac{n!\cdot l_R(R/\overline{\fa^{kl}})}{l^n}\\
&\ge& \lim_{l\rightarrow+\infty}\frac{n!\cdot l_R(R/\cJ(\fa^{kl}))}{l^n}=\mult(\fa^k).
\end{eqnarray*}
Thus the inequalities have to be identities and we are done.
\end{proof}
\begin{rem}\label{r-sharp}
In the proof, we indeed showed that if $\fa$ has the minimal normalized multiplicity and $S$ is  a Koll\'{a}r component such that $\ord_S(\fa)=m$ as in the statement of the above theorem, then for any $k$ such that $mkS$ is Cartier on $Y$,  the integral closure $\overline{\fa^k}$ coincides with the valuative ideal $\fa_{mk}$ of $\ord_{S}$ (see identity \eqref{eq-akint}). 
\end{rem}


\section{K-semistability implies the minimum}\label{s-min}
\subsection{Degeneration to initial ideals}\label{s-degin}
Let $(X, o)=({\rm Spec}(R), \fm)$ be an algebraic singularity such that $(X,D)$ is klt for a $\mathbb{Q}$-divisor $D\ge 0$. Given a Koll\'{a}r component $S$, we consider the associated degeneration $\cW^\circ/\bA^1$ of $X$ where $\cW^\circ$ is the underlying coarse moduli space of $\mathfrak{W}^\circ=\mathfrak{W}\backslash \mathfrak{Y}_0$ defined in Section \ref{ss-deformation}. We follow the notation in Section \ref{ss-deformation} and also denote by $v_0$ the valuation $\ord_S$. 

Suppose $\fb$ is an $\fm$-primary ideal on $X$. We will describe explicitly a way of obtaining an ideal $\mathfrak{B}$ on $\cW$ such that $\mathfrak{B}\otimes \cO_{X\times\bC^*}$ is the pull back of $\fb$ and $\mathfrak{B}\otimes \cO_C\cong \bin(\fb)$ by considering the closure of $\fb\times \mathbb{C}^*$ on $\cW$. 
For this purpose we consider the extended Rees algebra associated to the Koll\'{a}r component (see \cite[6.5]{Eis94}):
\[
\mathcal{R}'=\bigoplus_{k\in \bZ}\cR'_k:=\bigoplus_{k\in \bZ} \fa_k t^{-k}\subset R[t, t^{-1}],
\]
where $\fa_k=\fa_k(\ord_S)$. 
Notice that if $k\le 0$, then $\fa_k=R$. It is well known that the following identification holds true (recall that $R^*$ was defined in \eqref{eq-R*}):
\[
\mathcal{R}'\otimes_{\mathbb{C}[t]}\mathbb{C}[t,t^{-1}]\cong R[t, t^{-1}], \quad \mathcal{R}'\otimes_{\mathbb{C}[t]}\mathbb{C}[t]/(t)\cong \bigoplus_{k=0}^{+\infty}(\fa_k/\fa_{k+1})t^{-k}\cong R^*.
\]
Geometrically this exactly means $\cW^\circ=\Spec (\mathcal{R}')$ and
\[
\cW^\circ\times_{\bA^1}(\bA^1\setminus\{0\})=X\times (\bA^1\setminus\{0\}), \quad \cW^\circ\times_{\mathbb{A}^1}\{0\}=C.
\]
Notice that there is a natural $\bG_m$-action on $\cW^\circ$ given by the $\bZ$-grading.

For any $f\in R$, supposing $v_0(f)=k$ then we define 
$$\tilde{f}=t^{-k}f\in \fa_k t^{-k}\subset \cR',$$ and denote 
$$\bin(f)=[f]=[f]_{\fa_{k+1}}\in \fa_k/\fa_{k+1}=R^*_k,$$
where we use $[f]_{\fa}$ to denote the image of $f$ in $R/\fa$. 
Then we define the ideal $\kB$ to be the ideal in $\cR'$ generated by $\{\tilde{f}; f\in \fb\}$, and $\bin(\fb)$ the ideal of $R^*$ generated by 
$\{ \bin (f); f\in \fb \}$.
The first two items of the following lemma is similar to (but not the same as) \cite[Theorem 15.17]{Eis94} and should be well known to experts. Notice that here we degenerate both the ambient space and the ideal. A version of the equality \eqref{eqdim} was proved in \cite[Proposition 4.3]{Li15b}.
\begin{lem}[]
\begin{enumerate}
\item With the above notations, there are the identities:
\begin{equation*}\label{eqflat}
\left(\cR'/\kB\right) \otimes_{\mathbb{C}[t]}\mathbb{C}[t,t^{-1}]\cong (R/\fb)[t, t^{-1}], \quad \left(\cR'/\kB\right)\otimes_{k[t]}k[t]/(t)\cong R^*/\bin(\fb).
\end{equation*}
\item 
The $\mathbb{C}[t]$-algebra 
$\cR'/\kB$ is free and thus flat as a $\mathbb{C}[t]$-module. In particular, we have the identity of dimensions:
\begin{equation}\label{eqdim}
\dim_{\bC} \left(R/\fb\right)=\dim_{\bC}  \left(R^*/\bin(\fb)\right).
\end{equation}
\item
$\bin(\fb)$ is an $\fm_0$-primary homogeneous ideal, where $\fm_0=\sum_{k>0}R^*_k$.
\end{enumerate}
\end{lem}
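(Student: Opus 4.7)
The plan is to reduce all three parts to a single key identification. Viewing $\cR'\subset R[t,t^{-1}]$ via the natural inclusion, I claim
$$\kB=\cR'\cap\fb\cdot R[t,t^{-1}].$$
Each generator $\tilde{f}=ft^{-v_0(f)}$ with $f\in\fb$ clearly lies on the right. Conversely, a degree-$k$ element of $\cR'$ has the form $ht^{-k}$ with $h\in\fa_k$, and it lies in $\fb R[t,t^{-1}]$ exactly when $h\in\fa_k\cap\fb$. In that case, setting $m=v_0(h)\ge k$, one rewrites $ht^{-k}=t^{m-k}\cdot\tilde{h}$, and since $t^{m-k}\in\bC[t]\subset\cR'$ this lies in $\kB$.

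Granted this identification, part~(1) is immediate: inverting $t$ turns $\cR'$ into $R[t,t^{-1}]$ and $\kB$ into $\fb R[t,t^{-1}]$, giving the first isomorphism; the quotient modulo $(t)$ carries $\cR'$ onto $T$ and sends $\tilde{f}$ to $\bin(f)\in T_{v_0(f)}$, giving the second. For part~(2), the identification yields an injection of $\bC[t]$-modules
$$\cR'/\kB\hookrightarrow(R/\fb)[t,t^{-1}].$$
Since $\fb$ is $\fm$-primary, $R/\fb$ is a finite-dimensional $\bC$-vector space of some dimension $d$, and the decreasing filtration $(\fa_k+\fb)/\fb$ of $R/\fb$ stabilizes at $0$ for $k\gg 0$ (any nonzero class in $R/\fb$ has a finite maximal $v_0$-lift). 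Choose a $\bC$-basis $e_1,\dots,e_d$ of $R/\fb$ compatible with this filtration, and set $\nu(i):=\max\{k:e_i\in(\fa_k+\fb)/\fb\}$, a finite nonnegative integer. A direct verification then shows that $\{e_i t^{-\nu(i)}\}_{i=1}^d$ is a free $\bC[t]$-basis of $\cR'/\kB$: it spans because the degree-$k$ piece of $\cR'/\kB$ equals $((\fa_k+\fb)/\fb)t^{-k}$, and linear independence follows by reading off coefficients degree by degree. This yields freeness, hence flatness, together with the equality $\dim_\bC(R/\fb)=d=\dim_\bC(T/\bin(\fb))$.

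Finally, for part~(3), $\bin(\fb)$ is homogeneous because each generator $\bin(f)$ is; since $\fb\subset\fm$, each $\bin(f)$ has positive degree, so $\bin(\fb)\subset\fm_T=\bigoplus_{k\ge 1}T_k$. By part~(2) the quotient $T/\bin(\fb)$ is finite-dimensional over $\bC$, so $T_{\ge N}\subset\bin(\fb)$ for some $N$, whence $\fm_T^N\subset T_{\ge N}\subset\bin(\fb)$ and $\bin(\fb)$ is $\fm_T$-primary. The main technical point in this argument is the identification of the generator-defined ideal $\kB$ with the geometric intersection $\cR'\cap\fb R[t,t^{-1}]$; this bridges the explicit Rees-algebra description with the flat-family picture, and once it is in place the rest is essentially linear algebra over the PID $\bC[t]$.
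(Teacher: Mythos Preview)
Your proof is correct and takes a genuinely different, more streamlined route than the paper. The paper does not isolate the identification $\kB=\cR'\cap\fb\,R[t,t^{-1}]$; instead, for part~(2) it builds a basis from the $T$-side: it picks a $\bC$-basis $\{\bin(f^{(k)}_i)\}$ of each $T_k/(T_k\cap\bin(\fb))$, lifts to $R$, and then proves directly that $\{[\widetilde{f^{(k)}_i}]\}$ is a $\bC[t]$-basis of $\cR'/\kB$ by showing the lifts $\{[f^{(k)}_i]_\fb\}$ form a $\bC$-basis of $R/\fb$ (via a maximal-$v_0$ induction). Part~(3) in the paper is argued independently of part~(2), again using Izumi. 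Your route---embedding $\cR'/\kB$ into $(R/\fb)[t,t^{-1}]$ and reading off that the graded pieces are $((\fa_k+\fb)/\fb)t^{-k}$---makes the construction of a filtration-adapted $\bC[t]$-basis essentially automatic, and lets part~(3) fall out of part~(2). This is more conceptual; the paper's approach, by contrast, produces a basis that is visibly compatible with the $T$-grading, which matches how $\bin(\fb)$ is defined.

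One point deserves explicit mention. Your assertion that the filtration $(\fa_k+\fb)/\fb$ reaches $0$ for $k\gg 0$ is not a formal consequence of $\bigcap_k\fa_k=0$ and $\dim_\bC R/\fb<\infty$; one needs $\fa_k\subset\fm^N\subset\fb$ for $k$ large, and this uses the Izumi-type comparison $v_0\le C\cdot\ord_\fm$ available because $v_0=\ord_S$ is divisorial (equivalently, because $(X,o)$ is klt and analytically irreducible). The paper invokes Izumi explicitly at the analogous step; you should too, rather than leaving it as the parenthetical ``any nonzero class in $R/\fb$ has a finite maximal $v_0$-lift,'' which is a restatement of the claim rather than a justification.
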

\begin{proof}

The statement (1) follows easily from the definition.

\bigskip

Next we prove (2). Denote by $\fc_k=R^*_k\cap \bin(\fb)$ the $k$-th homogeneous piece of $\bin(\fb)$.
We fix a basis $\left\{\bin(f^{(k)}_i); 1\le i\le d_k \right\}$ of $R^*_k/\fc_k$. We want to show that 
\[
\cA':=\left\{\left.\left[\widetilde{f^{(k)}_i}\right]=\left[f^{(k)}_i\right]_{\kB}\; \right|\; 1\le i\le d_k \right\}\subset \cR'/\kB
\]
is a $\mathbb{C}[t]$-basis of $\cR'/\kB$. 

We first verify that $\cA'$ is a linearly independent set. To prove this, we just need to show that $\cA'$ is a $\mathbb{C}[t, t^{-1}]$-linearly independent subset of $(R/\fb)[t,t^{-1}]$. 
It is then enough to show that 
\begin{equation}\label{eqcA}
\cA:=\left\{[f^{(k)}_i]=[f^{(k)}_i]_\fb \; |\; 1\le i\le d_k\right\}\subset R/\fb.
\end{equation}
is $\mathbb{C}$-linearly independent, which can be verified directly as in \cite[Proposition 4.3]{Li15b}. See also \cite[Proposition 15.3]{Eis94}.

So we just need to show that $\cA'$ spans $\cR'/\kB$. Equivalently, we need to show that for any $f\in R$, $[\tilde{f}]=[\tilde{f}]_{\kB} \in \cR'/\kB$ is in the $\mathbb{C}[t]$-span of $\cA'$. 
This can be shown again with the help of $\cA$ in \eqref{eqcA}, that is, it is enough to prove that
$\cA$ spans $R/\fb$ as $\mathbb{C}$-linear space. Indeed, assuming the latter, for any $f\in R$, there exists a linear combination $g=\sum_{i,k}c_{ik} f^{(k)}_i$ such that 
$f-g=:h\in \fb$. If $m=v_0(f)$, then 
\[
\tilde{f}=t^{-m}f=\sum_{i,k}c_{ik}t^{-m}f^{(k)}_i+t^{-m}h
\]
Because $t^{-m}h\in \kB$, the above indeed implies $[\tilde{f}]$ is in the $\mathbb{C}[t]$-span of $\cA'$.

To prove that $\cA$ indeed $\mathbb{C}$-spans $R/\fb$, we first claim that the following set is finite:  
\[
\{v_0(g)\; |\; g\in R-\fb\}.
\]
Indeed because $\fb$ is $\fm$-primary, there exists $N>0$ such that $\fm^N\subseteq \fb\subseteq \fm$. So $R-\fb\subseteq R-\fm^N$. Now the claim follows from the fact that for  any element $f\in \fm^N$,  
$$v_0(f)\le c \cdot A(v_0)\cdot N$$
by Izumi's theorem,  where $c$ is a uniform constant not depending on $f$.

If there is $[f]\neq 0\in R/\fb$ that is not in the span of $\cA$, then we can choose a maximal $k=v_0(f)$ such that this happens. There are two cases:
\begin{enumerate}
\item If $\bin(f)\in R^*_k\setminus \fc_k$, then because $\bin(f^{(k)}_i)$ is a basis of $R^*_k/\fc_k$, there exists $t_j\in \bC$ such that $\bin(f)-\sum_{j=1}^{d_k} t_j \bin(f^{(k)}_j)=\bin(g) \in \fc_k$ for some $g\in\fb$. So we get:
\[
v_0\left(f-\sum_{j=1}^{d_k} t_j f^{(k)}_j-g\right)>k.
\]
 By maximality 
of $k$, $[f-\sum_{j=1}^{d_k} t_j f^{(k)}_j-g]=[f]-\sum_{j=1}^{d_k} t_j [f^{(k)}_j]$ and hence $[f]$ is in the span of $\cA$. Contradiction.
\item If $\bin(f)\in \fc_k=\bin(\fb)\cap R^*_k$. Then $\bin(f)=\bin(g)$ for some $g\in \fb$. So $v_0(f-g)>k$ and hence $[f-g]$ is in the span 
of $\cA$ by the maximal property of $k$. But then $[f]=[f-g]+[g]=[f-g]$ is in the span of $\cA$. Contradiction.
\end{enumerate}

\bigskip
To prove part 3 of the Lemma, we need to show that there exists $N\in \bZ_{>0}$ such that $\fm_0^{N}\subseteq \bin(\fb)\subseteq \fm_0$. Because $\fb$ is $\fm$-primary, there exists $N_1\in \bZ_{>0}$ such that $\fm^{N_1}\subseteq \fb \subseteq \fm$. By Izumi's theorem, 
there exists $l\in\bZ_{>0}$ such that $\fa_{l m}\subseteq \fm^m$ for any $m\in \bZ_{>0}$. By letting $N=l N_1$, it is easy to see that $\fm^{N}_0\subseteq \bin(\fb)\subseteq\fm_0$.

\end{proof}

\begin{lem}
If $\fb_\bullet=\{\fb_k\}$ is a graded family of ideals of $R$, then $\bin(\fb_\bullet):=\{\bin(\fb_k)\}$ is also a graded family
of ideals of $R^*$. 
\end{lem}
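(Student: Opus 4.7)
I would prove the lemma by verifying the defining property of a graded family directly: that $\bin(\fb_k)\cdot \bin(\fb_l)\subseteq \bin(\fb_{k+l})$ for all $k,l\ge 0$. Since by definition $\bin(\fb_k)$ is the ideal of $T$ generated by $\{\bin(f):f\in\fb_k\}$, it suffices to check the containment on generators, i.e.\ that for any $f\in\fb_k$ and $g\in\fb_l$ one has $\bin(f)\cdot \bin(g)\in \bin(\fb_{k+l})$.

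The heart of the proof is the identity
\[
\bin(f)\cdot\bin(g)=\bin(fg)\qquad\text{in }T,
\]
which I would deduce from the fact that $v_0=\ord_S$ is a \emph{valuation}, so $v_0(fg)=v_0(f)+v_0(g)$. More precisely, if $v_0(f)=m$ and $v_0(g)=n$, then $f\in\fa_m\setminus\fa_{m+1}$ and $g\in\fa_n\setminus\fa_{n+1}$, and by multiplicativity of $v_0$ we have $fg\in\fa_{m+n}\setminus\fa_{m+n+1}$. Consequently the product in the associated graded ring $T=\bigoplus_k \fa_k/\fa_{k+1}$ satisfies $[f]_{\fa_{m+1}}\cdot [g]_{\fa_{n+1}}=[fg]_{\fa_{m+n+1}}$, which is exactly $\bin(fg)$.

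Once this identity is in place, the conclusion is immediate: since $\fb_\bullet$ is a graded family, $fg\in \fb_k\cdot\fb_l\subseteq \fb_{k+l}$, so $\bin(fg)$ lies in the set of generators of $\bin(\fb_{k+l})$, and hence $\bin(f)\cdot\bin(g)=\bin(fg)\in\bin(\fb_{k+l})$. Extending by $T$-linearity gives the desired containment of ideals.

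I do not expect any real obstacle here; the only point that deserves care is the verification that the product of initial forms equals the initial form of the product, which rests crucially on the valuative (as opposed to merely filtration-theoretic) nature of $v_0$---for a general decreasing filtration one would only get $\bin(f)\cdot\bin(g)\in \bin(\fb_{k+l})\cup\{0\}$, but that suffices equally well since $0\in\bin(\fb_{k+l})$. Thus the argument goes through either way.
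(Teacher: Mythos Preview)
Your proposal is correct and follows essentially the same approach as the paper: reduce to checking $\bin(\fb_k)\cdot\bin(\fb_l)\subseteq\bin(\fb_{k+l})$ on generators, and prove $\bin(f)\cdot\bin(g)=\bin(fg)$ using that $v_0$ is a valuation so $v_0(fg)=v_0(f)+v_0(g)$. Your exposition is in fact slightly more careful than the paper's (you distinguish the valuation values $m,n$ from the family indices $k,l$, and you note that even for a general filtration the weaker statement with possible vanishing would suffice).
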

\begin{proof}
We just need to show that:
\[
\bin(\fb_k)\cdot \bin(\fb_l)\subseteq \bin(\fb_{k+l}).
\]
If $v_0(f)=k$ and $v_0(g)=l$, then $v_0(fg)=k+l$.
\[
\bin(f)\cdot \bin(g)=[f]_{\fa_{k+1}}\cdot [g]_{\fa_{l+1}}=[f g]_{\fa_{k+l+1}}=\bin(f\cdot g).
\]
\end{proof}

\begin{lem}\label{l-deg}
If $\fb_\bullet$ is a graded family of ideals, then 
\begin{equation}\label{eqdeg}
\lct^n(\fb_\bullet)\cdot \mult(\fb_\bullet)\ge \lct^n(\bin(\fb_\bullet))\cdot \mult(\bin(\fb_\bullet)).
\end{equation}
\end{lem}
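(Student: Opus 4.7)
My plan is to separate the desired inequality into two pieces: an equality of multiplicities and an inequality of log canonical thresholds, each handled by a different technique already set up in the excerpt.

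For the multiplicity side, I would apply part (2) of the preceding lemma directly at each level. The $\bC[t]$-flatness of $\cR'/\kB_k$ implies that the two fibers $R/\fb_k$ and $T/\bin(\fb_k)$ have the same $\bC$-dimension, so $l_R(R/\fb_k)=l_T(T/\bin(\fb_k))$ for every $k$. Passing to the limit under the normalization $n!/k^n$ that defines the multiplicity of a graded family (using the standard Mustata--Cutkosky-type existence result already cited in the proof of Proposition \ref{p-inf}) gives
\[
\mult(\fb_\bullet)=\lim_{k\to\infty}\frac{n!\cdot l_R(R/\fb_k)}{k^n}=\lim_{k\to\infty}\frac{n!\cdot l_T(T/\bin(\fb_k))}{k^n}=\mult(\bin(\fb_\bullet)).
\]

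For the lct side, I would argue that $\lct(\fb_\bullet)\ge\lct(\bin(\fb_\bullet))$ by viewing the entire construction as a flat family over $\bA^1$. The extended Rees algebra gives $W=\Spec(\cR')\to\bA^1$ with general fiber $X$ and special fiber $C=\Spec(T)$. Because $S$ is a Koll\'ar component, $(S,\Delta_S)$ is klt log Fano, and hence the orbifold cone pair $(C,D_C)$ obtained via the deformation-to-the-cone construction of Section \ref{ss-deformation} is klt. Moreover $\kB_k\subset \cR'$ is $\bC[t]$-flat and interpolates $\fb_k$ at $t\neq 0$ to $\bin(\fb_k)$ at $t=0$. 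By lower semicontinuity of log canonical threshold in $\bC[t]$-flat families of klt pairs, one concludes
\[
\lct(X,D;\fb_k)\ge \lct(C,D_C;\bin(\fb_k)).
\]
Since $\lct(\fb_\bullet)=\lim_k k\cdot \lct(\fb_k)$, and likewise for $\bin(\fb_\bullet)$, the inequality passes to the graded family limit. Combining with the multiplicity equality above yields \eqref{eqdeg}.

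The main obstacle is justifying the semicontinuity step cleanly, since the central fiber is only an orbifold cone (stacky structure along the vertex when $S$ is only $\bQ$-Cartier), and one must be careful that the boundary on $C$ is the correct one coming from $\Delta_S$ for the klt pair structure. If needed, I would bypass the stacky issue by passing to the degree-$d$ cyclic cover $C^{(d)}$ from Section \ref{ss-deformation}, where $dS$ is Cartier and the cone is an ordinary $\bA^1$-cone, applying Lemma \ref{l-finitevolume}-style change-of-variables formulas for $\lct$ and $\mult$ under the finite cyclic quotient, and then transferring the statement back. Alternatively, one can bypass external semicontinuity theorems entirely by exhibiting, for any valuation $w$ on $C$ computing $\lct(\bin(\fb_k))$, a companion valuation on $X$ (obtained by lifting through $\cR'$ and specializing via the $\bC^*$-action) whose log discrepancy and value on $\fb_k$ reproduce the same ratio, using the Izumi-type bounds already invoked in the proof of part (2) of the preceding lemma.
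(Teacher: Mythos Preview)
Your proposal is correct and follows essentially the same approach as the paper: both arguments combine the length equality $l_R(R/\fb_k)=l_T(T/\bin(\fb_k))$ from the preceding lemma with the lower semicontinuity of the log canonical threshold under the flat degeneration $\kB_k$, and then pass to the limit in $k$. The paper does this in one line (multiplying at level $k$ and taking a single limit), while you separate the multiplicity and lct limits before combining, but the content is identical; your extended discussion of the orbifold-cone issue and the alternative lift-of-valuation argument is more cautious than what the paper actually does, which simply invokes semicontinuity of lct in flat families without further comment.
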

\begin{proof}
By the flatness of $\kB$ and the lower semicontinuity of log canonical thresholds, we have $\lct(\fb_k)\ge \lct(\bin(\fb_k))$. Therefore, by \eqref{eqdim}
\begin{eqnarray*}
\lct^n(\fb_k)\cdot l_R(R/\fb_k)&\ge& \lct^n(\bin(\fb_k))\cdot l_{R^*}(R^*/\bin(\fb_k)).
\end{eqnarray*}
Taking limits as $k\rightarrow+\infty$, we then get the inequality \eqref{eqdeg}.
\end{proof}

\subsection{Equivariant K-semistability and minimizer}\label{s-equiv}

In this section, we will take a detour to show the discussion in Section \ref{s-degin} can be used to study the equivariant K-semistability. Here for a $\mathbb{Q}$-Fano variety $(V,\Delta)$ with an action by an algebraic group $G$, we call it {\it $G$-equivariantly K-semistable (resp. Ding semistable)} if for any $G$-equivariant test configuration, its generalized Futaki (resp. Ding) invariant is non-negative.  Let $T=(\mathbb{C}^*)^r$ be a torus. First we improve the two approximating results to the equivariant case.
\begin{prop}\label{l-Tmini}
Let $(X,o)=({\rm Spec}R, \fm)$ and $D\ge 0$ a $\mathbb{Q}$-divisor, such that $o\in (X,D)$ is a klt singularity. Assume $o\in (X,D)$ admits a $T$-action.  Then we have 
\begin{equation}\label{eq-Tvol2mul}
\min_{v} \hvol_{(X,D),o}(v)=\inf_{\fa}\lct^n(X,D; \fa)\cdot\mult(\fa)=\inf_{S}\hvol_{(X,D),o}(\ord_{S}),
\end{equation}
where on the left hand side $v$ runs over all the valuations centered at $o$, and in the middle $\fa$ over all the $T$-equivariant $\fm$-primary ideals; and at the end, $S$ runs over all $T$-equivariant Koll\'ar components. 
\end{prop}
\begin{proof} 
Let $\fa_{\bullet}=\{\fa^k\}$ be a graded sequence for an $\fm$-primary ideal $\fa$. Assume $T\cong (\bC^*)^r$. Fixing a lexicographic order on $\bZ^r$, we can degenerate the ideal $\fa^k$ to its initial ideal $\bin(\fa^k)$.

Lemma \ref{l-deg} implies that for  $\fb_{\bullet}=\{\fb_k\}=_{\rm defn} \{ \bin(\fa^k)\}$ 
$$\lct^n(X,D; \fb_{\bullet})\cdot\mult(\fb_{\bullet})\le \lct^n(X,D; \fa_{\bullet})\cdot\mult(\fa_{\bullet}).$$ 
Since there is the identity:
$$\lct^n(X,D; \fb_{\bullet})\cdot\mult(\fb_{\bullet})=\lim_{m} \lct^n(X,D; \fb_{m})\cdot\mult(\fb_{m}),$$
we conclude the first inequality as a corollary of Proposition \ref{p-inf}. 

\medskip

For the second equality, we just need to show that the construction in Section \ref{s-lvmodel} can be established $T$-equivariantly. This is standard, which relies on two facts: first, we can always take an equivariant log resoltuion of $(X,D, \fa)$ (see \cite{Kol07}); second,  as $T$ is a connected group, for any curve $C$ in a $T$-variety and any $t\in T$, $t\cdot C$ will always be numerically equivalent to $C$; as the minimal model program only depends on the numerical class $[C]$, we know that any MMP sequence is automatically $T$-equivariant. 
Therefore, for any $T$-equivariant $\fm$-primary ideal $\fa$, we can find a $T$-equivariant dlt modication $Y\to X$ and then a $T$-invariant Koll\'ar component $S$, such that
$$\lct^n(X,D; \fa)\cdot\mult(\fa)\ge \vol(Y)\ge \hvol( \ord_{S}).$$
\end{proof}

In \cite{Li15b} (see also \cite{LL16}), it was proved that  the canonical valuation on the affine cone minimizes $\hvol_X$ implies $V$ is K-semistable. Conversly, if $V$ is K-semistable then the canonical valuation  minimizes $\hvol_X$ among all $\bC^*$-invariant valuations. The argument extends easily to the logarithmic case. Proposition \ref{l-Tmini} allows us to extend the minimization result to all valuations in $\Val_X$. (\cite{LL16} proved the same result, but under the the assumption that $V$ degenerates to a Fano with K\"ahler-Einstein metric.) 
For the reader's convenience, we sketch the argument from \cite{Li15b, LL16}.
\begin{thm}\label{thm-Ksemi} 
Let $(V,\Delta)$ be a projective log Fano variety and $o\in (X,D)$ is the affine cone over $(V,\Delta)$ induced by some ample Cartier divisor $L=-r^{-1}(K_V+\Delta)$. Then the canonical valuation $v_0$ obtained by blowing up the vertex minimizes $\hvol_{(X,D)}$ on $\Val_{X,o}$ if and only if $(V,\Delta)$ is log-K-semistable.
\end{thm}
\begin{proof}

First we assume that $(V, \Delta)$ is log-K-semistable and prove the volume minimizing property of $\ord_V$. By Proposition \ref{l-Tmini}, we only need to prove that for any $\bC^*$-invariant divisorial valuation $v$ over $(X, o)$, 
$$\hvol(v_0)\le \hvol(v).$$ 

Let $Y\rightarrow X$ be the blow-up at $o$ with the exceptional divisor still denoted by $V$. Denote by $\cI_V$ the ideal sheaf of $V\subset Y$ and define (see \cite[Lemma 4.2]{Li15b})
$$c_1:=c_1(\cI_V)=\min\left\{v(\phi); \phi\in \cI_V(U), U\cap {\rm center}_Y(U)\neq \emptyset\right\}.$$
Denote $R=\bigoplus_{k=0}^{+\infty}R_k=\bigoplus_{k=0}^{+\infty}H^0(V, k L)$ such that $X={\rm Spec}(R)$. 
On $R$, we define a graded filtration
$$\cF R^{(t)} = \bigoplus_{k=0}^{+\infty} \cF^{kt}R_k, \quad \text{ with }
\cF^xR_k:=\{f\in R_k; v(f)\ge x\}. $$
The volume of $\cF R^{(t)}$ is defined to be
$$\vol(\cF R^{(t)}) := \limsup_{m\to \infty} \frac{\dim_{\bC} \cF^{mt}R_m}{m^n/n!}.$$
 By \cite[(21) and (22)]{Li15b}, we get a formula for
$\vol(v)$:  
\begin{eqnarray*}
\vol(v)&=&\lim_{m\rightarrow+\infty} \frac{n!}{m^n}\dim_{\bC} R/\fa_m(v)\\
&=&\frac{L^{n-1}}{c_1^n}-\int^{+\infty}_{c_1}\vol\left(\cF R^{(t)}\right)\frac{dt}{t^{n+1}}\\
&=&-\int^{+\infty}_{c_1}\frac{d \vol\left(\cF R^{(t)}\right)}{t^n}.
\end{eqnarray*}
Then we consider the following function 
\begin{eqnarray*}
\Phi(\lambda, s)&=&\frac{L^{n-1}}{(\lambda c_1 s+(1-s))^n}-n \int^{+\infty}_{c_1}\vol\left(\cF R^{(t)}\right)\frac{\lambda s dt}{(1-s+\lambda st)^{n+1}}\\
&=&\int^{+\infty}_{c_1}\frac{-d \; \vol(\cF R^{(t)})}{((1-s)+\lambda st)^n}.
\end{eqnarray*}
$\Phi(\lambda, s)$ satisfies the following properties:
\begin{enumerate}
\item For any $\lambda \in (0, +\infty)$, we have:
\[
\Phi(\lambda, 1)=\vol(\lambda v)=\lambda^{-n} \vol(v), \quad \Phi(\lambda, 0)=\vol(v_0)=L^{n-1}.
\]
\item For any $\lambda \in (0, +\infty)$, $\Phi(\lambda, s)$ is continuous and convex with respect to $s\in [0, 1]$.
\item The directional derivative of $\Phi(\lambda, s)$ at $s=0$ is equal to:
\[
\Phi_s(\lambda, 0)=n \lambda L^{n-1} \left(\lambda^{-1}-c_1-\frac{1}{L^{n-1}}\int^{+\infty}_{c_1} \vol\left(\cF R^{(t)}\right)dt\right).
\]
\end{enumerate}
Let $\lambda_*=\frac{r}{A_{(X, D)}(v)}$. Note that $A_{(X,D)}(v_0)=r$. So by item 1, we have:
$$
\Phi(\lambda_*,1)=\frac{\hvol(v)}{r^n}, \quad \Phi(\lambda_*, 0)=L^{n-1}=\frac{\hvol(v_0)}{r^n}.
$$
By item 2, we just need to prove $\Phi_s(\lambda_*, 0)\ge 0$. 
Let $\bar{v}=v|_{\bC(V)}$ be the restriction of $v$ under the inclusion $\bC(V)\hookrightarrow \bC(X)$. 
It is known that $\bar{v}=b\cdot \ord_E$ where $b\ge 0$ by \cite[Proof of Lemma 4.1]{BHJ15} and $\ord_E$ is a divisorial valuation on $\bC(V)$. Moreover $v$ is the $\bC^*$-invariant extension of $\bar{v}$ to $\bC(X)$ (cf. \cite[Lemma 4.2]{BHJ15}, \cite[Appendix 4.2.1]{Li15b}):
\begin{equation}
v(f)=\min \{c_1k+\bar{v}(f_k); f=\sum_k f_k\in R \text{ with } f_k\neq 0\in R_k\}.
\end{equation}
If $\phi: \tilde{V}\rightarrow V$ is a model that contains $E$ as a divisor, then $v$ can also be obtained as a quasi-monomial valuation on the model $\tilde{Y}\rightarrow Y$ where $\tilde{Y}=Y\times_{V}\tilde{V}$ (see \cite[Definition 6.12]{Li15b}). Using this description, it is easy to show that:
\[
\lambda^{-1}_*-c_1=\frac{A_{(X, D)}(v)}{r}-c_1= \frac{A_{(V,\Delta)}(\bar{v})}{r}=\frac{b\cdot A_{(V,\Delta)}(E)}{r}.
\]
By change of variables we get:
\[
\int^{+\infty}_{c_1}\vol\left(\cF R^{(t)}\right)dt=\int^{+\infty}_0 \vol\left(\cF_{\bar{v}} R^{(t)}\right) dt,
\]
where
\[
\cF_{\bar{v}} R^{(t)} = \bigoplus_k H^0(V, L^{\otimes k}\otimes \fa_{kt}), \qquad \mbox{and \ \   }  \fa_{kt}=\{f\in \cO_V\ |\  \bar{v}(f)\ge kt\}.
\]
So we get the equalities:
\begin{eqnarray*}
\Phi_s(\lambda_*, 0)&=&n \lambda_* L^{n-1}\left(A_{(V,\Delta)}(\bar{v})-\frac{r}{L^{n-1}}\int^{+\infty}_{0} \vol\left(\cF_{\bar{v}}R^{(t)}\right)dt\right)\\
&=&n \lambda_* L^{n-1} b\left(A_{(V,\Delta)}(E)-\frac{r}{L^{n-1}}\int_0^{+\infty} \vol\left(\cF_{\ord_E}R^{(t)}\right)dt \right).
\end{eqnarray*}
By applying Fujita's result in \cite{Fuj15} (see also \cite{Fuj16, Li15b, LL16}), we get $\Phi_s(\lambda_*, 0)\ge 0$. 

Conversely, if $\ord_V$ is volume minimizing, then the above calculation shows that
\begin{equation}
A_{(V, \Delta)}(\ord_E)-\frac{r}{L^{n-1}}\int_0^{+\infty} \vol\left(\cF_{\ord_E}R^{(t)}\right)dt
\end{equation}
is non-negative for any divisorial valuation $\ord_E$ over $V$. By the valuative criterion for (log-)K-semistability in \cite{Fuj16, Li15b, LL16}, this implies $(V, \Delta)$ is indeed log-K-semistable.
\end{proof}
An alternative way to prove the first implication of Theorem \ref{thm-Ksemi} is using Proposition \ref{p-fanocone} and the arguments of \cite[Section 4.2]{LL16}.
With all the techniques we have, we can prove Theorem \ref{t-equiK}.
\begin{proof}[Proof of Theorem \ref{t-equiK}]  
 Let $(X,D)$ be the affine cone of $L=-r^{-1}(K_V+\Delta)$ over $(V,\Delta)$ for $r^{-1}$ being some sufficiently divisible positive integer. We consider the minimizing problem of the normalized local volume  at the $T$-equivariant singularity $o$ which is the vertex. We aim to show that if $(V, \Delta)$ is $T$-equivariantly log-K-semistable then $\ord_V$ minimizes $\hvol_{(X,D)}$. 
This then implies that $(V,\Delta)$ is log-K-semistable by Theorem \ref{thm-Ksemi} .

Following the proof of Proposition \ref{l-Tmini}, we assume that $T=(\bC^*)^r$ and fix a lexicographic order on $\bZ^r$. Then by taking initial ideals, we can always associate a graded sequence of $T$-equivariant ideals to a given primary ideal. On the other hand, 
$$\inf_{\fa} \lct^n(X,D; \fa)\cdot \mult(\fa)=\inf_{\fa_\bullet}\lct^n(X,D; \fa_\bullet)\cdot\mult(\fa_\bullet)=\min_{v\in \Val_{X,o}} \hvol(v).$$ 
So we can find a sequence of $T$-equivariant ideals $\{\fa_i\}$ such that 
$$\inf_i \lct^n(\fa_i)\cdot \mult(\fa_i)=\min_{v\in \Val_{X,o}}\hvol(v).$$
Using the equivariant resolution and running an MMP process as in Section \ref{sec-appKol}, we can find a  sequence of $T$-equivariant Koll\'ar components  $S_i$ such that 
$$\inf_{i} \hvol(\ord_{S_i})= \min_{v\in \Val_{X,o}} \hvol(v). $$
For any $T$-equivariant Koll\'ar component $S_i$, we consider $v=\ord_{S_i}\in \Val_{X,o}$. Denote its induced divisorial valuation on $V$ by $b\cdot \ord_E$.

Arguing as in the proof of Theorem \ref{thm-Ksemi}, in order to conclude $\hvol(v_*)\le \hvol(\ord_{S_i})$, we want to show that $\Phi_s(\lambda_*, 0)\ge 0$, where
$$\Phi_s(\lambda_*, 0)=n\lambda_* L^{n-1} b\left(A_{(V,\Delta)}(E)-\frac{r}{L^{n-1}}\int_0^{+\infty} \vol\left(\cF_{\ord_E}R^{(t)}\right)dt\right)$$
for the $T$-equivariant divisorial valuation $E$ over $(V,\Delta)$.

Now we use the assumption that $(V, \Delta)$ is $T$-equivariantly K-semistable. Following the argument in \cite{BBJ15, Fuj16}, we know that $(V, \Delta)$ is $T$-equivariantly Ding-semistable.
Indeed, for any special test configuration, the Futaki invariant is the same as the Ding invariant. Using the fact that $T$-equivariant MMP decreases Ding invariant by \cite{BBJ15, Fuj16}, we know this implies that the Ding invariant for any $T$-equvariant test configuration is nonnegative. Applying the argument in \cite{Fuj15} (see \cite{Li15b, Fuj16}), we conclude that $\Phi_s(\lambda_*, 0)\ge 0$ as wanted.

\end{proof}


\subsection{Proof of Theorem \ref{t-main}}
Let $(X, o)=({\rm Spec}(R), \fm)$ be an algebraic singularity such that $(X,D)$ is klt for a $\mathbb{Q}$-divisor $D\ge 0$. 
Let $S$ be a Koll\'{a}r component and $\Delta=\Delta_S$ be the different divisor defined by the adjunction $(K_Y+S+\mu_*^{-1}D)|_S=K_S+\Delta_S$ where $\mu \colon Y\rightarrow X$ is the extraction of $S$. We follow the notation in Section \ref{ss-deformation} and \ref{s-degin}. In this section, we will prove Theorem \ref{t-main} which states that if $(S, \Delta_S)$ is K-semistable, then $\ord_S$ minimizes $\hvol_X$ over $\Val_{X,o}$.
\begin{lem}\label{l-ksemi} 
Let $\fb_\bullet$ be a graded sequence of $\fm_0$-primary ideal whose reduced support is $o_C\in C$. 
If $(S,\Delta_S)$ is K-semistable, then we have
$$ \lct^n(\fb_\bullet)\cdot \mult(\fb_\bullet)\ge \hvol_{(C,C_D),o_C}(\ord_S).$$
\end{lem}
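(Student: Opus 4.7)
The plan is to first reduce to the $\bC^*$-equivariant setting using the degeneration to initial ideals on the (orbifold) cone $C$, and then apply the valuative criterion of K-semistability on the base $(S,\Delta_S)$.

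First, I would apply the degeneration construction of Section~\ref{s-degin} with respect to the valuation $v_0=\ord_S$, which on $C$ coincides with the grading induced by the natural $\bC^*$-action. This replaces $\fb_\bullet$ by its associated initial graded sequence $\bin(\fb_\bullet)$, consisting of $\bC^*$-homogeneous ideals on $C$. By Lemma~\ref{l-deg},
$$\lct^n(\fb_\bullet)\cdot \mult(\fb_\bullet)\ \ge\ \lct^n(\bin(\fb_\bullet))\cdot \mult(\bin(\fb_\bullet)),$$
so it is enough to treat the case when $\fb_\bullet$ is $\bC^*$-invariant.

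Next, I would translate a $\bC^*$-equivariant graded sequence on $C$ into an $\bR$-filtration $\cF$ on the (orbifold) section ring $R_\bullet=\bigoplus_k H^0\!\big(S,-\tfrac{k}{A_{X,D}(S)}(K_S+\Delta_S)\big)$ of the base $(S,\Delta_S)$, using the dictionary between gradings on the cone and filtrations on the polarized base recalled in Section~\ref{sec-filtration} and used in Proposition~\ref{thm-Ksemi}. Under this dictionary, both sides of the desired inequality admit integral expressions in terms of the filtered volumes $\vol(\cF R^{(t)})$, exactly as in the cone computation \cite[(18)]{Li15b} reproduced in the proof of Proposition~\ref{thm-Ksemi}. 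In particular the quantity $\lct^n(\bin(\fb_\bullet))\cdot\mult(\bin(\fb_\bullet))$ becomes the value of a weighted integral of $\vol(\cF R^{(t)})$ associated to $\cF$, while $\hvol(\ord_S)$ corresponds to the trivial filtration.

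Finally, I would invoke the valuative/filtration criterion of K-semistability from \cite{Fuj16, Li15b, LL16} (cf.\ Proposition~\ref{p-fujita}): the K-semistability of $(S,\Delta_S)$ is equivalent to the nonnegativity of the Ding-type invariant of any such filtration $\cF$. Rearranging this nonnegativity exactly as in the computation of $\Phi_s(\lambda_*,0)$ in the proof of Proposition~\ref{thm-Ksemi} yields
$$\lct^n(\bin(\fb_\bullet))\cdot \mult(\bin(\fb_\bullet))\ \ge\ \hvol_{C,C_\Delta,o}(\ord_S),$$
which combined with the degeneration inequality completes the proof. The main technical obstacle is the orbifold nature of $C$: since $S$ is only $\bQ$-Cartier in $Y$, the dictionary between graded sequences on $C$ and filtrations on the section ring of $(S,\Delta_S)$ must be set up with the orbifold polarization $\cO_{\cY}(-\cS)|_\cS$ in mind. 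If this is awkward to treat directly, one can first pass to the degree-$d$ cyclic cover $C^{(d)}\to C$ of Section~\ref{ss-deformation}, where $S$ becomes genuinely Cartier and Proposition~\ref{p-fujita} applies verbatim, and then transfer the resulting inequality back to $C$ using Lemma~\ref{l-finitevolume} and the corresponding multiplicativity of $\lct^n\cdot\mult$ under the cyclic quotient.
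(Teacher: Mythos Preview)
Your proposal reaches the right endpoint (pass to the genuine cone $C^{(d)}$ and invoke the K-semistability criterion), but the middle of the argument is more complicated than necessary and contains a step that does not quite work as stated.

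The issue is in your second and third paragraphs. A $\bC^*$-invariant graded sequence $\fb_\bullet$ of $\fm_0$-primary ideals on $C$ does not in general correspond to a \emph{single} filtration $\cF$ on the section ring of $(S,\Delta_S)$; the dictionary you invoke from Section~\ref{sec-filtration} and Proposition~\ref{thm-Ksemi} is set up for valuations (or for a single ideal and its powers), not for arbitrary graded sequences. So the claim that ``$\lct^n(\bin(\fb_\bullet))\cdot\mult(\bin(\fb_\bullet))$ becomes the value of a weighted integral of $\vol(\cF R^{(t)})$'' is not justified, and rearranging $\Phi_s(\lambda_*,0)\ge 0$ does not directly give an inequality for graded sequences of ideals.

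The paper sidesteps this entirely. It observes, using \cite{JM12}, that
\[
\lct^n(\fb_\bullet)\cdot\mult(\fb_\bullet)=\lim_{k\to\infty}\lct^n(\fb_k)\cdot\mult(\fb_k),
\]
so by Proposition~\ref{p-inf} the desired inequality is equivalent to the single statement that $\ord_S$ minimizes $\hvol_{C,C_\Delta,o}$ among all valuations. This is then checked by passing to $C^{(d)}$ (where Proposition~\ref{thm-Ksemi} applies since the polarization is genuinely Cartier) and transferring back via Proposition~\ref{p-cover}. Your initial degeneration step via Lemma~\ref{l-deg} and your final $C^{(d)}$ workaround are both absorbed into this: the reduction to equivariant data is already built into Lemma~\ref{l-Tmini} (used inside Proposition~\ref{thm-Ksemi}), and Proposition~\ref{p-cover} is exactly the transfer you describe in your last sentence. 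So the paper's route is shorter and avoids the ill-defined filtration step.
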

\begin{proof} 
Using the result in \cite{JM12}, we have 
\begin{eqnarray*}
\lct^n(\fb_\bullet)\cdot \mult(\fb_\bullet)&=&\lim_{k \to +\infty} \big(k\cdot \lct(\fb_k) \big)^n\cdot \frac {\mult(\fb_k)}{k^n}\\
&=&\lim_{k\to +\infty} \lct^n(\fb_k)\cdot \mult(\fb_k).
\end{eqnarray*}
By Proposition \ref{p-inf},  it suffices to show that  $\hvol_{(C,C_D),o_C}(\ord_S)$ is equal to
$$\min_{v} \hvol_{(C,C_D),o_C}(v)$$
for $v$  runs over valuations centered on $o_C$. 

It follows from  Theorem \ref{thm-Ksemi} that if we choose $d$ sufficiently divisible, such that $C^{(d)}=C(S,H)$ is constructed as the cone over $S$ with an ample Cartier divisor $H$ proportional to $-(K_{S}+\Delta_{S})$, then the canonical valuation $\ord_{S^{(d)}}$ is a minimizer of $\hvol_{(C^{(d)}, C^{(d)}_{1}+C^{(d)}_2)}$.   By Proposition \ref{p-cover}, this implies the same holds for $C$.
\end{proof}

\begin{prop}\label{p-cover}
With the above notations,  $\hvol_{({C}, C_D)}$ minimizes at $\ord_S$ if and only if $\hvol_{({C}^{(d)}, C^{(d)}_{1}+C^{(d)}_2)}$ minimizes at $\ord_{S^{(d)}}$. 
\end{prop}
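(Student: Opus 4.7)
The plan is to exploit the degree $d$ cyclic crepant cover $h\colon C\to C^{(d)}$ constructed in Section \ref{ss-deformation} together with the $\mathbb{C}^*$-equivariant minimization principle established in Section \ref{s-equiv}. By construction, $h$ is Galois with group $G=\mu_d\subset \mathbb{C}^*$ (the $\mathbb{C}^*$ here being the one acting by the cone structure on $C$), and $h^*(K_{\bar C^{(d)}}+C^{(d)}_\Delta+C^{(d)}_D)=K_{\bar C}+C_D$, so the hypotheses of Lemma \ref{l-finite} and Lemma \ref{l-finitevolume} are satisfied. The canonical Koll\'ar component $\ord_S$ on $C$ is the pullback of $\ord_{S^{(d)}}$ under $h$ in the sense of Lemma \ref{l-finite}; in particular Lemma \ref{l-finitevolume} gives
$$\hvol_{C,C_D}(\ord_S)=d\cdot \hvol_{C^{(d)},C^{(d)}_\Delta+C^{(d)}_D}(\ord_{S^{(d)}}).$$

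First I would reduce both minimization problems to their $\mathbb{C}^*$-equivariant versions. Both $(C,C_D)$ and $(C^{(d)},C^{(d)}_\Delta+C^{(d)}_D)$ admit the natural $\mathbb{C}^*$-action coming from the grading, so Lemma \ref{l-Tmini} applies to each side and shows that the infimum of $\hvol$ is computed by $\mathbb{C}^*$-equivariant Koll\'ar components. Because $G\subset \mathbb{C}^*$, every such Koll\'ar component on $C$ is automatically $G$-invariant, and by the converse half of Lemma \ref{l-finite} the operation $E\mapsto E^{(d)}:=h(E)$ sets up a bijection between $G$-equivariant Koll\'ar components on $(C,C_D)$ and Koll\'ar components on $(C^{(d)},C^{(d)}_\Delta+C^{(d)}_D)$. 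Under this bijection Lemma \ref{l-finitevolume} provides the uniform scaling
$$\hvol_{C,C_D}(\ord_E)=d\cdot \hvol_{C^{(d)},C^{(d)}_\Delta+C^{(d)}_D}(\ord_{E^{(d)}}).$$

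Combining these two steps, I obtain
$$\min_{v\in\Val_{C,o}}\hvol_{C,C_D}(v)=d\cdot \min_{v'\in\Val_{C^{(d)},o}}\hvol_{C^{(d)},C^{(d)}_\Delta+C^{(d)}_D}(v'),$$
so the minimum on $C$ is attained at $\ord_S$ if and only if the minimum on $C^{(d)}$ is attained at $\ord_{S^{(d)}}$, using the scaling identity between these two specific valuations. The main obstacle is the equivariant reduction: one must verify that $\mathbb{C}^*$-equivariant Koll\'ar components suffice to compute the infimum, which follows by combining Lemma \ref{l-Tmini} with the $\mathbb{C}^*$-equivariant dlt modification argument of Lemma \ref{l-gooddlt} and Lemma \ref{l-kollar} (the torus action preserves numerical classes, so every MMP step in those constructions is automatically equivariant). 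Once this is in place, the rest of the argument is a direct bookkeeping with Lemmas \ref{l-finite} and \ref{l-finitevolume}.
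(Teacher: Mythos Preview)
Your proposal is correct and follows essentially the same approach as the paper: both arguments combine Lemma \ref{l-finite} (pullback and descent of Koll\'ar components along the Galois cover $h$), Lemma \ref{l-finitevolume} (the degree-$d$ scaling of $\hvol$), and Lemma \ref{l-Tmini} (reduction to $\bC^*$-equivariant Koll\'ar components, which are automatically $G$-invariant since $G\subset\bC^*$). The only cosmetic difference is that you package the argument symmetrically by first establishing the equality of the two minima, whereas the paper treats the two implications separately---using direct pullback for one direction and the equivariant reduction only for the converse.
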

\begin{proof}The degree $d$ cover $h\colon C\to C^{(d)}$ is a fiberwise map with respect to the cone structures and the Galois group $G=_{\rm defn}\mathbb{Z}/d$ is natural a subgroup of $\mathbb{C}^*$.
Let $E$ be a Koll\'ar component over $C^{(d)}$. By Lemma \ref{l-finite} we know $h^*(E)$ is a Koll\'ar component over $ C$, and it follows from Lemma \ref{l-finitevolume} (or \cite[Lemma 6.9]{Li15b}) that
$$d\cdot \hvol(\ord_E)=\hvol(h^*E). $$
 So if $\ord_S$ minimizes $\hvol_{(C,C_D)}$, then the corresponding  canonical valuation also minimizes $\hvol_{({C}^{(d)}, C^{(d)}_{1}+C^{(d)}_2)}$. 
 
 For the converse, let $E$ be a $T$-invariant Koll\'ar component over $C$. Since it is $G$-invariant, by Lemma \ref{l-finite} we know that it is a pull back of a Koll\'ar component $F$ over $C^{(d)}$. Assume that the canonical valuation minimizes $\hvol_{({C}^{(d)}, C^{(d)}_{1}+C^{(d)}_2)}$. Then over $C$, we see that $\hvol(\ord_S)$ is less than or equal to $\hvol(\ord_E)$ for any $T$-equivariant Koll\'ar component $E$. Therefore $\ord_S$ is a minimizer of $\hvol_{(C,C_{D})}$ by Proposition \ref{l-Tmini}. 
\end{proof}

Theorem \ref{t-main} is implied by Theorem \ref{thm-Ksemi} and the following proposition.
\begin{prop}\label{p-degvol}
Given any Koll\'ar component $S$ over $o\in (X,D)$, it induces a $\bC^*$-equivariant degeneration to an `orbifold' cone $o_C\in(C,C_D)$ with a Koll\'ar component $S_0\cong S$ which is the canonical valuation with respect to the orbifold cone structure, and we have 
$$\hvol_{(X,D),o}(\ord_S)=\hvol_{(C,C_D),o_C}(\ord_{S_0}) \qquad \mbox{and}\ \ \ \vol(o, X,D))\ge \vol(o_C, C, C_D). $$
\end{prop}
\begin{proof} 
We use same notations as in Section \ref{ss-deformation}. In particular, we denote by  $\cZ$ (resp. $\cW$) the coarse moduli space of $\mathfrak{Z}$ (resp. $\mathfrak{W}$). Let $\phi\colon \cZ\to X_{\mathbb{A}^1}(=X\times \mathbb{A}^1 )$ be the birational morphism and $S'_{\mathbb{A}^1}$ the birational transform of $ S_{\mathbb{A}^1} \subset Y_{\mathbb{A}^1}$ on $\cZ$. Write $aS'_{\mathbb{A}^1}\sim_{\mathbb{Q},\cW}K_{\cZ}+\phi^{-1}_*D_{\mathbb{A}^1}+S'_{\mathbb{A}^1}$. Restricting over a general fiber and taking the coarse moduli spaces, we obtain 
$$aS\sim_{\bQ,X} K_Y+S+\mu^{-1}_*(D),$$
then $a=A_{(X,D)}(S)$.  Similarly, over the central fiber, we get
$$aS_0\sim_{\bQ,C} K_{Y_0}+S_0+(\mu^{-1}_{0})_* C_D,$$
 where $\mu_0\colon Y_0\to C$  is the blow up of the vertex $o_C$ with the exceptional divisor $S_0\cong S$.  Thus $a=A_{C,C_D}(S_0)$.

We also know that 
$$\vol_{X,o}(\ord_S)=(-S|_S)^{n-1}=(-S_0|_{S_0})^{n-1}=\vol_{C,o_C}(\ord_{S_0}).$$

Combining all the above, we know that for any ideal $\fb$ on $X$, if we  let $\fb_{\bullet}=\{\fb^k\}$, then
\begin{eqnarray*} 
\hvol_{(X,D),o}(\ord_S) &= &\vol_{X,o}(\ord_S)\cdot A^n_{(X,D)}(S) \\
 & =  &\vol_{C,o_C}(\ord_{S_0})\cdot A^n_{(C,{C_{D}})}(S_0)\\
 &\le & \lct^n(\bin(\fb_\bullet))\cdot \mult(\bin(\fb_\bullet))\\
 &\le & \lct^n(\fb) \cdot  \mult (\fb),
\end{eqnarray*}
where  the last two inequalities follow from Lemma  \ref{l-ksemi}  and \ref{l-deg}.  
Thus we conclude that 
$$\hvol_{(X,D),o}(\ord_S)\le \inf_{\fb} \lct^n(\fb) \cdot  \mult (\fb)=\inf_v \hvol_{(X,D),o}(v),$$
where the second equality follows from Proposition \ref{p-inf}.
\end{proof}


\section{Uniqueness}\label{s-uni}

In this section, we will prove Theorem \ref{t-main2} on the uniqueness of the minimizers among all Koll\'ar components. There are two steps: first we prove this for cone singularities; then for a general singularity, we combine the deformation construction with some results from the minimal model program to essentially reduce it to the case of cone singularities.

\subsection{Case of cone singularity}\label{ss-ucone}

We first settle the case of cone singularities. It can be proved using Proposition \ref{p-Tequiv} and \cite[Theorem 3.4]{Li15b}. Here we give a different proof, which analyzes the geometry in more details. A similar argument in the global case appears in the proof of \cite[Theorem 3]{Liu16}, where a characterization of quotients of $\mathbb{P}^n$ was given as those achieving the maximal possible volumes among all K-semistable $\mathbb{Q}$-Fano varieties with only quotient singularities.

\bigskip

Let $(V,\Delta)$ be an $(n-1)$-dimensional log Fano variety and $-(K_V+\Delta)=r H$ for some $r\in \bQ$ and an ample Cartier divisor $H$. We assume $r\le n$. Let $X^0:=C(V, H)$ be the affine cone over the base $V$ with the vertex $o$ and let $X$ be the projective cone and $D$ be the cone divisor over $\Delta$ on $X$. 

Consider a Koll\'ar component $S$ over $o\in (X,D)$ with the extraction morphism $\mu\colon Y\to X$. Let 
$\mu_{\mathbb{A}^1}\colon Y_{\mathbb{A}^1}\rightarrow X_{\mathbb{A}^1}$
be the extraction of $S_{\mathbb{A}^1}$. We carry out the process of deformation to normal cones as in Section \ref{ss-deformation} with respect to $S$. Here $X$ is a projective variety instead of a local singularity, but the construction is exactly the same.  We denote by $\cZ$ (resp. $\cW$) the coarse moduli space of $\mathfrak{Z}$ (resp. $\mathfrak{W}$), so there are morphisms, $\psi_1: \cZ\to \cW$, $\phi_1\colon \cZ\to Y_{\mathbb{A}^1}$ and $\pi\colon \cW\to X_{\mathbb{A}^1}$. We denote by $\phi=\mu_{\mathbb{A}^1}\circ\phi_1.$
\begin{center}
\begin{figure}[h]
\centering
\includegraphics[width=8cm, height=5.5cm]{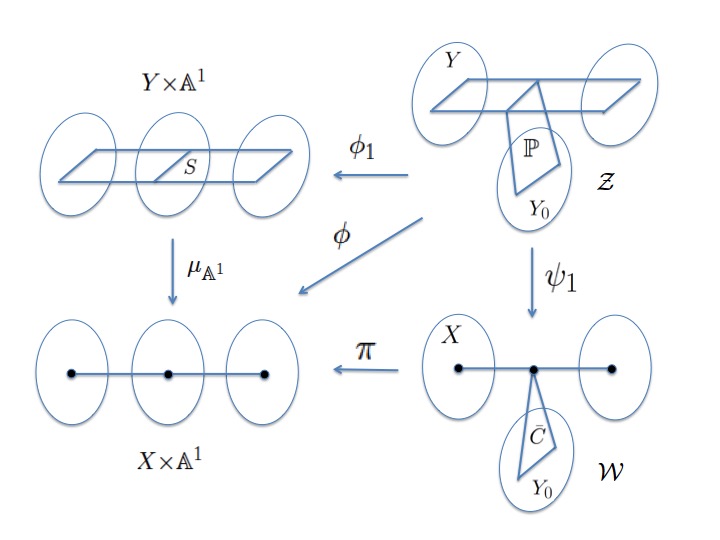}
\caption{Degeneration associated to a Koll\'{a}r component}
\label{fig-deg}
\end{figure}
\end{center}

Denote by $ \bP$ the irreducible exceptional divisor for $\phi_1$. We have the following equalities:
\begin{enumerate}
\item
$K_{Y_{\mathbb{A}^1}}+(\mu_{\mathbb{A}^1})_*^{-1}D_{\mathbb{A}^1}=\mu_{\mathbb{A}^1}^* (K_{X_{\mathbb{A}^1}}+D_{\mathbb{A}^1})+a  S_{\mathbb{A}^1} $ with $a=A_{(X,D)}(S)-1$;
\item
$K_{\cZ}+\phi_*^{-1}D_{\mathbb{A}^1}=\phi_1^*\big(K_{Y_{\mathbb{A}^1}}+(\mu_{\mathbb{A}^1})_*^{-1}D_{\mathbb{A}^1}\big)+\bP$;
\item
$K_{\cZ}+\phi_*^{-1}D_{\mathbb{A}^1}=\psi_1^* \big(K_{\cW}+(D_{\mathbb{A}^1})_\cW\big)+a  S'_{\mathbb{A}^1}$, where $(D_{\mathbb{A}^1})_\cW:=\psi_{1*}\phi_*^{-1}(D_{\mathbb{A}^1})$ and $S'_{\bA^1}=(\phi_1^{-1})_*(S_{\bA^1})$.
\end{enumerate}
The first two equalities imply:
\begin{eqnarray*}
K_{\cZ}+\phi_*^{-1}(D_{\mathbb{A}^1})&=&\phi_1^* \big(K_{Y_{\mathbb{A}^1}}+(\mu_{\mathbb{A}^1})_*^{-1}D_{\mathbb{A}^1}\big)+ \bP\\
 &=&\phi_1^*\mu_{\mathbb{A}^1}^* (K_{X_{\mathbb{A}^1}}+D_{\mathbb{A}^1})+a \phi_1^*S_{\mathbb{A}^1}+\bP\\
&=&\phi^*(K_{X_{\mathbb{A}^1}}+D_{\mathbb{A}^1})+a S'_{\mathbb{A}^1}+(a+1)\bP.
\end{eqnarray*}
So $A_{X_{\mathbb{A}^1},D_{\mathbb{A}^1}}(\bP)=a+2=A_{(X,D)}(S)+1$. This implies:
\[
K_{\cW}+(D_{\mathbb{A}^1})_\cW=\pi^*(K_{X_{\mathbb{A}^1}}+D_{\mathbb{A}^1})+A_{(X,D)}(S) \bar{C}.
\]
Denote $\hat{L}=\cO_{X}(V_\infty)$ for the cone construction, where $V_\infty$ is the divisor at infinity which is isomorphic to $V$. 
Then we have $-K_X-D=(1+r) \hat{L}$ and:
\[
K_{\cW}+(D_{\mathbb{A}^1})_\cW=-(1+r)\rho^*\hat{L}+A_{(X,D)}(S) \bar{C},
\]
where $\bar{C}$ is the orbifold cone over $C$ and $\rho\colon \cW\to X$ the composite of $\pi\colon \cW\to X_{\mathbb{A}^1}$ with the projection $X_{\mathbb{A}^1}\to X$. 

As in \cite{LL16}, we define the cone angle parameter $\beta=\frac{r}{n}$ and let $\delta=r\frac{n+1}{n}$. Then  
\begin{equation}\label{eq-alcX}
-(K_X+D+(1-\beta)V_\infty)\sim_{\bQ}(1+r)\hat{L}-(1-\frac{r}{n})\hat{L}=r\frac{n+1}{n}\hat{L}=\delta \hat{L}.
\end{equation}
Denote by  $\bV_{\infty}$ the birational transform of $(V_{\infty})_{\mathbb{A}^1}$ on $\cW$.  We also get:
\begin{eqnarray}\label{lclY2}
& &K_{\cW}+(D_{\mathbb{A}^1})_\cW+(1-\beta)\bV_\infty\nonumber\\
&=&\pi^*\big(K_{X_{\mathbb{A}^1}}+D_{\mathbb{A}^1}+(1-\beta) (V_\infty)_{\bA^1}\big)+A_{(X,D)}(S)\bar{C}\nonumber\\
&=&-\delta\rho^*\hat{L}+A_{(X,D)}(S)\bar{C}.
\end{eqnarray}

The above construction works for any Koll\'{a}r component. From now on we assume that $(V,\Delta)$ is K-semistable and $S$ minimizes the normalized volume, i.e. it satisfies
\begin{equation}
\hvol(\ord_{S})=\hvol(\ord_{V_0})=r^n (H^{{n-1}}),
\end{equation}
where $V_0$ denotes the exceptional divisor obtained by blowing up the vertex of the cone and we aim to show $S=V_0$. We note that by Theorem \ref{thm-Ksemi}, $\hvol(\ord_{V_0})$ is the minimal normalized volume. 
Then we have:
\[
\vol(\ord_{S})=\frac{\hvol(\ord_{S})}{A_{(X,D)}(S)^n}=\frac{r^n (H^{n-1})}{A_{(X,D)}(S)^n}.
\]

In Section \ref{s-equiv}, we have used the filtration induced by a valuation (see also \cite{BHJ15, Fuj15}).  Here we use the same construction but for sections on the projective cone instead of the base. 

\addtocounter{thm}{1}

\begin{defn}[Filtration by valuation]
For a fixed a valuation $v\in \Val_{X,o}$, let $\hat{R}_m=H^0(X, m \hat{L})$. Define $\cF^x\hat{R}_m:=\cF^x_v \hat{R}_m \subset \hat{R}_m$ to be a decreasing filtration (with respect to $x$) as follows:
$$\cF^x\hat{R}_m=H^0(X, m \hat{L} \otimes \fa_x), \qquad \mbox{where \ } \fa_x=\{f\in \cO_X\ |\  v(f)\ge x\}. $$

On $\bigoplus_{m=0} \hat{R}_m$, we define $\cF \hat{R}^{(t)} :=\cF_v\hat{R}^{(t)}= \bigoplus \cF^{kt}\hat{R}_k$.  Then the volume is defined to be
$$\vol(\cF \hat{R}^{(t)}) := \limsup_{m\to \infty} \frac{\dim_{\bC} (\cF^{mt}\hat{R}_m)}{m^n/n!}.$$

\end{defn}

The following proposition answers the question in \cite[Section 6]{LL16}. 
\begin{prop}\label{p-fanocone}
With the above notation, if the base $(V,\Delta)$ is log K-semistable, then $(X,D+(1-\beta)V_{\infty})$ is log K-semistable. As a consequence,
\[
A_{(X,D)}(S)-\frac{\delta }{(\hat{L}^{n})}\int_0^{+\infty}\vol(\cF_{\ord_S} \hat{R}^{(x)})dx\ge 0.
\]
\end{prop}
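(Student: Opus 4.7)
The proposition has two parts. First, I would establish that the projective cone pair $(X, D+(1-\beta)V_\infty)$ is log K-semistable. Once this is in place, the displayed inequality is a direct consequence of Fujita's valuative criterion applied to the prime divisor $S$.

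For the first part, the strategy is to invoke Theorem \ref{t-equiK}. Since $(X, D+(1-\beta)V_\infty)$ carries a natural $\bG_m$-action from the cone structure, it suffices to check that every $\bG_m$-equivariant special test configuration $\mathcal{V}\to \bA^1$ has non-negative generalized Futaki invariant. Any such equivariant test configuration respects the grading $R = \bigoplus_{m\ge 0} H^0(V, mL)$ and is therefore obtained, via the cone construction, from a special test configuration of $(V,\Delta)$. A Ross--Thomas-type cone formula (see the computations in \cite[Section 6]{Li15b} and \cite[Section 4]{LL16}) then expresses the generalized Futaki invariant of the cone test configuration as a positive multiple of the Futaki invariant of the base test configuration, so K-semistability of $(V,\Delta)$ forces the required non-negativity.

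For the second part, I would apply the valuative criterion of Fujita--Li \cite{Fuj16, Li15b, LL16} to the K-semistable log Fano pair $(X, D+(1-\beta)V_\infty)$ with anticanonical class $M := -(K_X + D + (1-\beta)V_\infty) = \delta L$: for every prime divisor $F$ over $X$,
\[
A_{X, D+(1-\beta)V_\infty}(F) \cdot M^n \;\ge\; \int_0^{+\infty} \vol\bigl(\cF_{F, M}^{(s)}\bigr)\, ds,
\]
where $\cF_{F, M}^{(s)}$ denotes the filtration induced by $F$ on $\bigoplus_m H^0(X, mM)$. Specializing to $F = S$, two simple observations close the argument. First, the center of $S$ is the vertex $o$, which is disjoint from $V_\infty$, so $A_{X, D+(1-\beta)V_\infty}(S) = A_{X, D}(S)$. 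Second, since $M = \delta L$, the substitution $m \mapsto m/\delta$ gives $\vol(\cF_{M}^{(s)}) = \delta^{n}\, \vol(\cF S^{(s/\delta)})$; together with $M^n = \delta^n L^n$ and the change of variables $y = s/\delta$, Fujita's inequality becomes the stated one.

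The main obstacle is the first step: the cone-to-base K-semistability correspondence. Although this is well established in various forms (\cite{LL16, LS14}), carrying it out in the present framework requires careful reduction of arbitrary special test configurations to $\bG_m$-equivariant ones, which is precisely what Theorem \ref{t-equiK} supplies, followed by a precise matching of the cone and base Futaki invariants. Once the K-semistability of $(X, D+(1-\beta)V_\infty)$ is secured, the valuative estimate is a direct application of the valuative criterion plus a rescaling bookkeeping.
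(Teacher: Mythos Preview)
Your proposal is correct and follows essentially the same route as the paper: reduce via Theorem \ref{t-equiK} to $\bC^*$-equivariant special test configurations, relate their Futaki invariants to those of induced test configurations of $(V,\Delta)$, and then obtain the displayed inequality from the Fujita--Li valuative criterion (the paper phrases this last step as passing through log-Ding-semistability and \cite[Proposition 4.5]{LL16}). The only cosmetic difference is that the paper, rather than asserting that an equivariant special test configuration of the cone arises as a cone over a base test configuration, restricts the given test configuration to the closure $\cV$ of $V_\infty\times(\bP^1\setminus\{0\})$ and verifies by a direct intersection-number computation that ${\rm Fut}(\cX)={\rm Fut}(\cV)$.
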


\begin{proof}

It is enough to verify that the generalized Futaki invariant is nonnegative for any compactified special test configuration $\pi\colon (\mathcal{X},\mathcal{D}+(1-\beta)\cV)\to \mathbb{P}^1$ of $(X,D+(1-\beta)V_{\infty})$ over $\mathbb{P}^1$ (see Section \ref{ss-tc}), where $\mathcal{V}\supset V_{\infty}\times( \mathbb{P}^1\setminus \{0 \})$ the closure. Let $\Delta_{\infty}(=\Delta)=V_{\infty}\cap D$ and $\Delta^{\rm tc}$ be the closure of $\Delta_{\infty}\times ( \mathbb{P}^1\setminus \{0\})$. Then  $\mu\colon(\mathcal{V},\Delta^{\rm tc})\to \mathbb{P}^1$ is a compactified test configuration of $(V,\Delta)$. 
As $(1+r)V_\infty\sim_{\mathbb{Q}}  -(K_X+D) $, we know that there exists $k\in \bQ$ such that:
\begin{eqnarray*}
&&(1+r)\mathcal{V}\sim_{\mathbb{Q}}-K_{\mathcal{X}}-\mathcal{D}+\pi^*\cO_{\bP^1}(k) \text{ and} \\
&&K_{\cV}+\Delta^{\rm tc}=(K_{\mathcal{X}}+\mathcal{D}+\cV)|_{\cV}=-r \cV|_{\cV}+\mu^*\cO_{\bP^1}(k).
\end{eqnarray*} 
The adjunction formula holds because $\mathcal{X}$  is smooth along the codimension 2 points over $0$ and so there is no different divisor. Since $\beta=\frac{r}{n}$ and $\delta=r\frac{1+n}{n}$, we have the identity:
$$
-(K_{\cX/\bP^1}+\cD+(1-\beta)\cV)\sim_{\bQ} \delta\cdot \cV+\pi^*\cO_{\bP^1}(-2-k).
$$
Then the generalized Futaki invariant of $(\mathcal{X},\mathcal{D}+(1-\beta)\cV)/\mathbb{P}^1$ is equal to:
\begin{eqnarray*}
{\rm Fut}(\mathcal{X})&=& -\frac{1}{(n+1)(\delta \hat{L})^n}(-K_{\mathcal{X}/\mathbb{P}^1}-\mathcal{D}-(1-\beta)\mathcal{V})^{n+1}\\
&=&-\frac{1}{\hat{L}^n }\pi^*\cO_{\bP^1}(-2-k) \cdot \cV^{n}-\frac{\delta}{(n+1)\hat{L}^{n}}\cV^{n+1}.
\end{eqnarray*}
On the other hand, the generalized Futaki invariant of $(\cV, \Delta^{\rm tc})/\bP^1$ is equal to:
\begin{eqnarray*}
{\rm Fut}(\cV)&=&-\frac{1}{nr^{n-1} H^{n-1}}((-K_{\mathcal{V}/\mathbb{P}^1}-\Delta^{\rm tc})|_{\mathcal{V}})^{n}\\
&=&-\frac{1}{nr^{n-1}H^{n-1}}(r\cV|_{\cV}-\mu^*\cO_{\bP^1}(k)+\mu^*K_{\bP^1})^n\\
&=&-\frac{1}{r^{n-1}H^{n-1}} r^{n-1}\mu^*\cO_{\bP^1}(-2-k) \cdot (\cV|_{\cV})^{n-1}-\frac{r}{n H^{n-1}}(\cV|_\cV)^{n}\\
&=&-\frac{1}{H^{n-1}}\pi^*\cO_{\bP^1}(-2-k)\cdot \cV^n-\frac{r}{n H^{n-1}}\cV^{n+1}.
\end{eqnarray*}
Because $H^{n-1}=\int_{ [V]} H^{n-1}=\int_{[X]} \hat{L}^n=\hat{L}^n$, we have the identity: 
$${\rm Fut}(\mathcal{V})={\rm Fut}(\mathcal{X}). $$
Finally, recall the log-K-semistability is equivalent to the log-Ding-semistablity (see e.g. \cite{Fuj16}). Then the second statement is obtained by applying \cite[Proposition 4.5]{LL16} to $(X,D+(1-\beta)V_{\infty})$ and $\hat{L}=-\frac{1}{\delta}(K_X+D+(1-\beta)V_{\infty})$. 
\end{proof}

The following calculations are key to us and proved in \cite[Proof of Proposition 4.5]{LL16}.
\begin{prop}[\cite{LL16}]\label{p-equal}
Suppose $(V, \Delta)$ is log-K-semistable. If $S$ is a Koll\'{a}r component obtaining the minimum of $\hvol$ over $(X, o)$, then the graded filtration induced by $S$ satisfies the following two conditions:
\begin{enumerate}
\item The following identity holds:
\[
A_{(X,D)}(S)-\frac{\delta }{(\hat{L}^{n})}\int_0^{+\infty}\vol(\cF \hat{R}^{(x)})dx=0.
\]
\item Denote $\tau:=\sqrt[n]{\frac{(\hat{L}^{ n})}{\vol(\ord_{S})}}$. We have:
\[
\vol\left(\cF \hat{R}^{(x)}\right)=\vol_Y(\mu^*\hat{L}-xS)=(\hat{L}^{n})-\vol(\ord_{S})x^n
\text{ for any } x\in [0, \tau].
\]
\end{enumerate}
\end{prop}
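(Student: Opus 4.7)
The plan is to extract both equalities from the single Fujita-type inequality
\[
A_{X,D}(S) \ge \frac{\delta}{L^n}\int_0^\infty \vol(\cF S^{(x)})\,dx
\]
of Proposition \ref{p-fanocone}, by combining it with the minimizing hypothesis on $\hvol(\ord_S)$ and with intersection-theoretic data on the model $Y$. The minimizing hypothesis $\hvol(\ord_S) = r^n L^n$ (using $L^n = H^{n-1}$ on the projective cone) rephrases as $A_{X,D}(S) = r\tau$ with $\tau := \sqrt[n]{L^n/\vol(\ord_S)}$; plugging this into Proposition \ref{p-fanocone} together with $\delta = r(n+1)/n$ immediately yields the one-sided bound $\int_0^\infty \vol(\cF S^{(x)})\,dx \le \frac{n L^n \tau}{n+1}$.

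For part (1), the matching lower bound comes from revisiting the convexity setup in the proof of Proposition \ref{thm-Ksemi}. The interpolation $\Phi(\lambda_*, s)$ between $\vol(\ord_{V_0})$ at $s=0$ and $\lambda_*^{-n}\vol(\ord_S)$ at $s=1$, with $\lambda_* = r/A_{X,D}(S)$, is convex in $s$; the minimum hypothesis gives $\Phi(\lambda_*,0) = L^{n-1} = \Phi(\lambda_*,1)$, and since $\ord_S$ is a global minimizer of $\hvol$, each valuation $v_s$ in the family satisfies $\hvol(v_s) \ge r^n L^{n-1}$, which through the explicit $A(v_s)$-formula along the interpolation translates into $\Phi(\lambda_*,s) \ge L^{n-1}$. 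Convexity then pins down $\Phi(\lambda_*,\cdot) \equiv L^{n-1}$ and hence $\Phi_s(\lambda_*,0) = 0$; unpacking this vanishing via the formula of Proposition \ref{thm-Ksemi} yields the base-level Fujita equality, which after the cone-to-base change of variables between $\bigoplus_k H^0(V,kL)$ and $\bigoplus_m H^0(X,L^{\otimes m})$ becomes exactly part (1).

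For part (2), with part (1) in hand, the inequality chain collapses and $\int_0^\infty \vol(\cF S^{(x)})\,dx = \frac{n L^n \tau}{n+1}$. Now $\vol(\cF S^{(x)}) = \vol_Y(\mu^*L - xS)$ by comparing sections under $\mu$, and the intersection numbers $\mu^*L|_S = 0$ together with Lemma \ref{l-inter} give $(\mu^*L - xS)^n = L^n - x^n \vol(\ord_S)$ on the nef range of $\mu^*L - xS$. Since $\int_0^\tau (L^n - x^n\vol(\ord_S))\,dx = \frac{n L^n\tau}{n+1}$, and the integrand equals this polynomial on the nef range while being non-negative everywhere, the only way the integrals can match is for both the nef and pseudoeffective thresholds of $\mu^*L - xS$ to coincide with $\tau$, forcing $\vol_Y(\mu^*L - xS) = L^n - x^n\vol(\ord_S)$ throughout $[0, \tau]$.

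The step I expect to be most delicate is the lower bound $\Phi(\lambda_*, s) \ge L^{n-1}$ along the interpolating family in part (1): the proof of Proposition \ref{thm-Ksemi} relied on $\mathbb{C}^*$-equivariance of the valuations involved, so here one would first invoke Theorem \ref{t-equiK} to reduce to $\mathbb{C}^*$-invariant valuations before running the convexity argument. A secondary but delicate point in part (2) is precisely ruling out a gap between the nef and pseudoeffective thresholds of $\mu^*L - xS$; this should follow from the sharpness of the integral equality together with the concavity of $\vol_Y(\mu^*L - xS)^{1/n}$, supplemented if necessary by a Kawamata base-point-free style argument applied to $\mu^*L - \tau S$.
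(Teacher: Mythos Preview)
Your overall plan—squeeze the Fujita inequality of Proposition~\ref{p-fanocone} against a matching lower bound coming from the minimizing hypothesis—is exactly the right shape, and your part~(2) argument, once part~(1) is known, is essentially correct. The paper's own proof is only a pointer to \cite{LL16}, so a self-contained argument is welcome. But your route to part~(1) has a real gap.

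The gap is the step ``each valuation $v_s$ in the family satisfies $\hvol(v_s)\ge r^n L^{n-1}$, which \dots\ translates into $\Phi(\lambda_*,s)\ge L^{n-1}$.'' The entire $\Phi(\lambda,s)$ machinery of Proposition~\ref{thm-Ksemi}—the volume formula $\vol(v)=-\int_{c_1}^\infty d\vol(\cF R^{(t)})/t^n$, the interpretation of $\Phi(\lambda_*,s)$ as $\vol(v_s)$ for an interpolating quasi-monomial family with $A(v_s)\equiv r$, and the restriction $\bar v$ to $\bC(V)$—is developed in \cite{Li15b} only for $\bC^*$-invariant valuations on the cone. For an arbitrary Koll\'ar component $S$ you have no such family $v_s$, and hence no way to translate the minimizing hypothesis into $\Phi(\lambda_*,s)\ge L^{n-1}$. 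Your proposed fix, Theorem~\ref{t-equiK}, does not help: that theorem says K-semistability is detected by $T$-equivariant test configurations, not that a given minimizing valuation is $T$-equivariant. What you would need is Proposition~\ref{p-Tequiv}, which indeed shows a minimizing Koll\'ar component is $\bC^*$-equivariant, but that result appears only later in Section~5.2 and is logically downstream of the present proposition.

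There is a much more direct lower bound that avoids $\Phi$ and equivariance altogether, and this is presumably the content of the cited formulas in \cite{LL16}. Since $\fa_x$ is $\fm$-primary supported at $o$, the restriction map gives an injection
\[
S_m/\cF^{mx}S_m \;\hookrightarrow\; H^0\bigl(X,\; L^{\otimes m}\otimes(\cO_X/\fa_{mx})\bigr)\;\cong\; \cO_{X,o}/\fa_{mx},
\]
so $\dim S_m-\dim\cF^{mx}S_m\le l_R(\cO_{X,o}/\fa_{mx})$; dividing by $m^n/n!$ and letting $m\to\infty$ yields
\[
\vol\bigl(\cF S^{(x)}\bigr)\;\ge\; L^n - x^n\,\vol(\ord_S) \qquad\text{for all }x\ge 0.
\]
Integrating over $[0,\tau]$ (where the right side is nonnegative) gives $\int_0^\infty\vol(\cF S^{(x)})\,dx\ge \frac{n}{n+1}L^n\tau$. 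Since the minimizing hypothesis $\hvol(\ord_S)=r^nL^n$ gives $A_{X,D}(S)=r\tau$ and $\delta=r(n+1)/n$, this reads
\[
\frac{\delta}{L^n}\int_0^\infty\vol\bigl(\cF S^{(x)}\bigr)\,dx\;\ge\; r\tau \;=\; A_{X,D}(S),
\]
which is exactly the reverse of Proposition~\ref{p-fanocone}. Hence both inequalities are equalities: the first gives part~(1), and then the pointwise inequality $\vol(\cF S^{(x)})\ge\max\{0,\,L^n-x^n\vol(\ord_S)\}$ must be an equality almost everywhere, giving part~(2). This line of argument makes your delicate nef/pseudoeffective-threshold discussion unnecessary and requires no equivariance assumption on $S$.
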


\begin{lem}We have $\tau=\frac{A_{(X,D)}(S)}{r}$.
\end{lem}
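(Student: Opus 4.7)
The plan is to compute $\tau$ directly from its definition and the equality hypothesis on $\hvol(\ord_S)$. First I would unpack the definition: by construction
\[
\tau^n = \frac{(L^n)}{\vol(\ord_S)},
\]
so the task reduces to evaluating the two quantities $(L^n)$ and $\vol(\ord_S)$ in terms of $r$, $(H^{n-1})$, and $A_{X,D}(S)$.

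Second, I would use the minimizing hypothesis recorded just above the lemma, namely
\[
\hvol(\ord_S) = \hvol(\ord_{V_0}) = r^n (H^{n-1}).
\]
Since $\hvol(\ord_S) = \vol(\ord_S)\cdot A_{X,D}(S)^n$ by definition, this immediately gives
\[
\vol(\ord_S) = \frac{r^n (H^{n-1})}{A_{X,D}(S)^n}.
\]

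Third, I would compute $(L^n)$ on the projective cone $X=\overline{C(V,H)}$. This is the standard cone computation: $L=\mathcal{O}_X(1)$ satisfies $L|_{V_\infty}\cong H$ and $L$ is linearly equivalent to $V_\infty$, so
\[
(L^n)_X = (L^{n-1}\cdot V_\infty)_X = (H^{n-1})_V.
\]
(Equivalently, one reads this off the Hilbert series $\sum_k h^0(V,kH) \sim \tfrac{(H^{n-1})}{n!}k^n$, which is also how $\vol(\ord_{V_0}) = (H^{n-1})$ gets identified.)

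Finally, substituting both computations,
\[
\tau^n = \frac{(H^{n-1})}{\vol(\ord_S)} = \frac{(H^{n-1})\,A_{X,D}(S)^n}{r^n(H^{n-1})} = \frac{A_{X,D}(S)^n}{r^n},
\]
and taking $n$-th roots gives $\tau = A_{X,D}(S)/r$, as required. There is no real obstacle here; the statement is essentially a bookkeeping consequence of the minimizing equality combined with the elementary intersection-theoretic identity $(L^n)=(H^{n-1})$ on a projective cone, and it is recorded as a lemma only because the ratio $A_{X,D}(S)/r$ will reappear repeatedly in the subsequent rigidity analysis.
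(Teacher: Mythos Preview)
Your argument is correct, and it is more direct than the paper's. The paper derives the value of $\tau$ by combining both parts of Proposition~\ref{p-equal}: it plugs the explicit formula $\vol(\cF S^{(x)})=(L^n)-\vol(\ord_S)x^n$ into the integral equality $A_{X,D}(S)=\frac{\delta}{(L^n)}\int_0^{\tau}\vol(\cF S^{(x)})\,dx$ (with $\delta=r\frac{n+1}{n}$), evaluates the integral, and reads off $A_{X,D}(S)=r\tau$. You instead use only the definition $\tau^n=(L^n)/\vol(\ord_S)$, the minimizing equality $\vol(\ord_S)=r^n(H^{n-1})/A_{X,D}(S)^n$ already recorded in the text, and the elementary cone identity $(L^n)=(H^{n-1})$ (which the paper also notes via $V_\infty\sim L$). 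Your route avoids invoking the integral equality (item~1), which is the deeper input coming from log-K-semistability; in effect you show that this particular consequence follows already from the minimizing hypothesis alone. Both approaches are valid; yours is shorter, while the paper's makes visible that the integral identity is consistent with the rest of the picture.
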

\begin{proof}
Combining 1 and 2 in Proposition \ref{p-equal}, we know that 
\[
A_{(X,D)}(S)-\frac{r(1+n)}{n\cdot \hat{L}^n}\int^{\tau}_{0}\big(\hat{L}^n-\vol(\ord_S)x^n\big)dx= A_{(X,D)}(S)-r\cdot \tau= 0.
\]
\end{proof}

Arguing as in \cite{Fuj15} (see also \cite{Liu16}), we know that:
\begin{lem}\label{lemamp}
$\tau$ is the nef threshold of $\mu^*\hat{L}$ with respect to the divisor $S$, i.e. 
$$\tau=\sup\left\{x\;|\; \mu^*\hat{L}-xS \text{ is ample } \right\}.$$ 
\end{lem}
\begin{proof} When the point is smooth, this follows from \cite[Theorem 2.3(2)]{Fuj15}. Exactly the same argument can be used to treat the current case. 
\end{proof}
\begin{thm}\label{t-cone}
If $S$ is a Koll\'{a}r component that obtains the minimum of the normalized volume, then $S$ is the canonical component $V_0$.
\end{thm}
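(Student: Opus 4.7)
The plan is to exploit the equality case in Proposition~\ref{p-equal} together with Lemma~\ref{lemamp} to extract a $\bP^1$-fibration structure on $Y$ and then identify this fibration with the canonical one $Y_0 \to V$, forcing $S = V_0$.

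First I will observe that because $\mu^*L - \tau S$ is nef by Lemma~\ref{lemamp} and its volume equals $L^n - \vol(\ord_S)\tau^n$ by Proposition~\ref{p-equal}, the cone identity $L^n = H^{n-1}$ combined with the minimizing hypothesis $\hvol(\ord_S) = r^n H^{n-1}$ yields
\[
(\mu^*L - \tau S)^n = H^{n-1} - \frac{\hvol(\ord_S)}{r^n} = 0,
\]
so $\mu^*L - \tau S$ is nef but not big. Applying the Kawamata base point free theorem to the klt pair $(Y, \mu_*^{-1}D + (1-\epsilon)S)$ (using $K_Y + \mu_*^{-1}D + (1-\epsilon)S \sim_{\bQ} -(1+r)\mu^*L + (A_{X,D}(S)-\epsilon)S$, so that $a(\mu^*L - \tau S) - K_Y - \mu_*^{-1}D - (1-\epsilon)S$ has coefficient ratio strictly below the nef threshold $\tau$ and is therefore ample for suitable $\epsilon, a > 0$) gives semi-ampleness. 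The induced Stein-factored contraction is a morphism $\pi\colon Y \to V$ with connected one-dimensional fibers, for which $V_\infty$ is a section (the polarization restricts to $H$ there) while $S$ maps finitely onto $V$ with some degree $d \ge 1$ (the polarization restricts to the ample class $-\tau S|_S$ on $S$).

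Next, I would pin down $d = 1$ by adjunction on a general fiber. For a general fiber $F$ of $\pi$, which is a smooth rational curve in the smooth locus of $Y$, triviality of $\mu^*L - \tau S$ on $F$ gives $\mu^*L \cdot F = \tau(S\cdot F) = \tau d$; meanwhile $\mu(F)$ is a line through the vertex and so $\mu^*L \cdot F = 1$, yielding $\tau d = 1$ and hence $A_{X,D}(S) = r/d$. Combining $K_Y + \mu_*^{-1}D \sim_{\bQ} -(1+r)\mu^*L + (A_{X,D}(S) - 1)S$ with $K_Y \cdot F = -2$, adjunction yields $\mu_*^{-1}D \cdot F = 1 - d$, so effectiveness of $\mu_*^{-1}D$ forces $d = 1$. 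Thus $A_{X,D}(S) = r$, $\tau = 1$, $-S|_S = H$, and both $\pi|_{V_\infty}$ and $\pi|_S$ are degree-one finite birational morphisms between normal varieties, hence isomorphisms onto $V$.

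To finish, $V_\infty$ and $S$ are two disjoint sections of $\pi\colon Y \to V$ with normal bundles $\cO_V(H)$ and $\cO_V(-H)$ respectively, which uniquely identifies $Y$ with the $\bP^1$-bundle $\bP(\cO_V \oplus \cO_V(-H))$, i.e.\ with the canonical blow-up $Y_0$; under this identification $S$ corresponds to $V_0$ and $\mu$ becomes the canonical contraction $\mu_0$, so $S = V_0$. The step I expect to require the most care is the fiber-adjunction computation: one has to confirm that a general fiber $F$ really lies in the smooth locus of $Y$ (so that $K_Y \cdot F = -2$ with no ``different'' correction from the plt pair $(Y, \mu_*^{-1}D + S)$) and that $\mu_*F$ is a reduced line through the vertex, both of which should follow from standard genericity arguments for klt Mori-type fibrations.
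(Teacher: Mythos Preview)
Your overall architecture is very close to the paper's: both arguments run the base point free theorem to contract $\mu^*L-\tau S$ and then pin down the resulting fibration. The paper carries this out on the family $W$ (contracting $\rho^*L-\tau\bar{C}$) and reads off $A_{X,D}(S)=r$ from line-bundle identities on the central fibre $\bar{C}$, whereas you work directly on $Y$ and try to read off the degree by intersecting with a general fibre $F$.

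The substantive gap is the step ``$V_\infty$ is a section'' / ``$\mu(F)$ is a line through the vertex, so $\mu^*L\cdot F=1$''. Your stated justification --- that the polarization restricts to $H$ on $V_\infty$, plus ``standard genericity arguments for klt Mori-type fibrations'' --- only shows $\pi|_{V_\infty}$ is \emph{finite}, not of degree one; and there is no general principle forcing the image of a Mori fibre to be a line in the target of $\mu$. What is actually needed here is the extra input, specific to the minimizing hypothesis, that $\tau$ is also the \emph{pseudo-effective} threshold: combining parts (1) and (2) of Proposition~\ref{p-equal}, the integral identity forces $\vol(\cF S^{(x)})=0$ for $x>\tau$. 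Given this, one checks that
\[
H^0\bigl(Y,\; m(\mu^*L-\tau S)-V_\infty\bigr)=0\quad\text{(pseudo-effective threshold)},\qquad
H^1\bigl(Y,\; m(\mu^*L-\tau S)-V_\infty\bigr)=0\quad\text{(Kawamata--Viehweg)},
\]
so restriction gives $H^0(Y,m(\mu^*L-\tau S))\cong H^0(V_\infty,mH)$; hence the target of $\pi$ is $V$ and $\pi|_{V_\infty}$ is an isomorphism. Only then does $V_\infty\cdot F=1$ follow, and with it $\mu^*L\cdot F=1$ and your adjunction computation $\mu_*^{-1}D\cdot F=1-d$. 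This is exactly the mechanism the paper uses (they restrict to $S$ rather than $V_\infty$, obtaining $H^0(Y,m(\mu^*L-\tau S))\cong H^0(S,-m\tau S)$ from the same pseudo-effective threshold and vanishing inputs); once you insert it, your argument goes through and is essentially a variant of theirs.
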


We first show the following statements.
\begin{lem}
\begin{enumerate}
\item
$\rho^*\hat{L}-\tau \bar{C}$ is semi-ample, and contracts $Y$ to $S_{\infty}(\cong S)\subset \bar{C}$ as the divisor at infinity of the orbifold projective cone $\bar{C}=\bar{C}(S, -S|_S)$.
\item
$A_{(X,D)}(S)=r$ and 
 there is a special test configuration $\cX$ of $(X, D+ (1-\beta)V_\infty; \hat{L})$ whose central fibre $X_0$ is 
$(\bar{C}, {C}_D+(1-\beta) S_{\infty}; \hat{L}_0)$ where ${C}_D$ is the intersection of $ \bar{C}$ with $(D\times \bA^1)_\cW$. Moreover, $(\bar{C}, {C}_D+(1-\beta) S_{\infty}; \hat{L}_0)\cong (X, D+ (1-\beta)V_\infty; \hat{L})$.
\end{enumerate}
\end{lem}
\begin{proof}

The proof of this part is along the similar line in \cite[Proof of Lemma 33]{Liu16}.
First we observe the following restrictions of $\rho^*\hat{L}-x\bar{C}$:
\begin{enumerate}
\item
$\left.\rho^*\hat{L}-x\bar{C}\right|_{X_t}=\hat{L}$, $t\neq 0$. Recall that $X_t\cong X$ for $t\in \bC^*$.
\item
$\left.\rho^*\hat{L}-x \bar{C}\right|_{Y_0}=\mu^*\hat{L}-x S$. 
\item
$\left.\rho^*\hat{L}-x\bar{C}\right|_{\bar{C}}=-x\bar{C}|_{\bar{C}}=x Y_0|_{\bar{C}}=x S_{\infty}=x\cO_{\bar{C}}(1)$. 
\end{enumerate}
So by Lemma \ref{lemamp}, it is easy to see that $\rho^*\hat{L}-x\bar{C}$ is ample when $x\in (0, \tau)$. 
To show that $\rho^*\hat{L}-\tau\bar{C}$ is semi-ample, we use \eqref{lclY2} to calculate:
\begin{eqnarray*}
m(\rho^*\hat{L}-x \bar{C})-K_{\cW}-(D_{\bA^1})_\cW&=&m(\rho^*\hat{L}-x\bar{C})+(1+r) \rho^*\hat{L}-A_{(X,D)}(S)\bar{C}\\
&=&(m+1+r)\left(\rho^*\hat{L}-\frac{mx+A_{(X,D)}(S)}{m+1+r}\bar{C}\right).
\end{eqnarray*}
Notice that:
\[
\frac{mx+A_{(X,D)}(S)}{m+1+r}<\tau= \frac{A_{(X,D)}(S)}{r}
\]
if and only if 
\[
x< \left(1+\frac{1}{m}\right)\frac{A_{(X,D)}(S)}{r}.
\] 
Because this is satisfied for 
$$x=\tau=\frac{A_{(X,D)}(S)}{r}\qquad \mbox{for any\ } m>0,$$ the first statement holds by base-point-free theorem \cite[Theorem 3.13]{KM98}. 
Next we claim that 
\begin{equation}\label{eqcontr1}
H^0(Y, m(\mu^*\hat{L}-\tau S))\cong H^0(S, -m\tau S)
\end{equation} 
for any $m$ sufficiently divisible. To see this, we consider the exact sequence:
\begin{equation}
0\rightarrow \cO_{Y}(m(\mu^*\hat{L}-\tau S)-S)\rightarrow \cO_{Y}(m(\mu^*\hat{L}-\tau S))\rightarrow \cO_{Y}(m(\mu^*\hat{L}-\tau S))\otimes\cO_S\rightarrow 0,
\end{equation}
and its associated long exact sequence of cohomology groups.
By the above discussion, and
\[
m(\mu^*\hat{L}-\tau S)-S-K_Y=m(\mu^*\hat{L}-\frac{A_{(X,D)}(S)}{r}S)+(1+r)\mu^*\hat{L}-A_{(X,D)}(S)S
\]
is ample,  it follows from the Kawamata-Viehweg vanishing theorem that  
\[
H^1\big(Y, m(\mu^*\hat{L}-\tau S)\otimes \cO(-S)\big)=0 \text{ for any } m\ge 0.
\]
We also have
\[
H^0\big(Y, m(\mu^*\hat{L}-\tau S)\otimes \cO(-S)\big)=0 \text{ for any } m\ge 0,
\]
as $\tau$ is also the pseudo-effective threshold. 
Thus we know $|m(\rho^*\hat{L}-\tau \bar{C})|$ contracts the fiber  $\cW\times_{\mathbb{A}^1} \{0 \}$ to $\bar{C}$ for sufficiently divisible $m$. This finishes the proof of (1). We denote by $ \theta\colon \cW\to \mathcal{X}$ the induced morphism and there is an ample line bundle $\hat{\cL}$ on $\cX$ such that $\theta^*\hat{\cL}=\rho^*\hat{L}-\tau \bar{C}$. 
\bigskip

Next we prove (2).  Let $(D_{\mathbb{A}^1})_{\cX}$ be the push forward of $(D_{\mathbb{A}^1})_{\cW}$ on $\cX$. Then $-K_{\cX}-(D_{\mathbb{A}^1})_{\cX}$ and $(1+r) \hat{\cL}$ coincide outside $X_0$, they must be relatively linearly equivalent on the whole
$\cX$ because $X_0$ is irreducible. In particular, they are linearly equivalent when restricted to $X_0$.

Since 
$$ (K_{Y}+\mu^{-1}_*D+S)|_S=K_S+\Delta_S\sim_{\mathbb{Q}}A_{(X,D)}(S) \cdot S|_S,$$ 
we know that
$$-K_{\cX}-(D_{\mathbb{A}^1})_{\cX}|_{X_0}=-K_{\bar{C}}-{C}_D\sim_{\mathbb{Q}}(1+A_{(X,D)}(S))S_{\infty}.$$
Similarly, we have
$\hat{\cL}|_{X_0}\sim_{\bQ}\tau S$ with $\tau=\frac{A_{(X,D)}(S)}{r}$.
Therefore,
\[
1+A_{(X,D)}(S)=(1+r)\frac{A_{(X,D)}(S)}{r},
\]
which implies $A_{(X,D)}(S)=r$ and $\tau=1$. 

The degree of $V_{\infty}$ under $\hat{\cL}$ is
\begin{eqnarray*}
\hat{\cL}|_{X_0}^{n-1}\cdot V_\infty&=&\hat{L}^{n-1}\cdot V_\infty\\
&=&\hat{L}^{ n},
\end{eqnarray*}
while the degree of $S$ is
\begin{eqnarray*}
\hat{\cL}|_{X_0}^{ n-1}\cdot S &=&\tau^{-1}\hat{\cL}|_{X_0}^{ n}=\hat{L}^n=\hat{L}_0^n.
\end{eqnarray*}

The restriction $\theta|_{V_{\infty}} \colon V_{\infty}\to S $ is finite since 
$$(\rho^*\hat{L}-\tau \bar{C})|_{V_{\infty}}=\hat{L}|_{V_{\infty}}$$
is ample. And the degree is one by the above calculation on degrees, which implies this is an isomorphism. We claim that $Y$ is indeed the $\mathbb{P}^1$-bundle over $V_{\infty}$ induced by blowing up the vertex of $X$,  $S$ is a section and the morphism $\theta$ is just contracting the $\mathbb{P}^1$-bundle. 
 Granted this for now, 
we then indeed have an isomorphism from
$(X, D+(1-\beta)V_\infty; \hat{L})$
to $(\bar{C}, C_D+(1-\beta) S_{\infty}; \hat{L}_0)$. 

To see the claim, let $l$ be a curve contracted by $\theta$, we want to show that it is the birational transform of a ruling line of $X$. To see this, since $(\rho^*(\hat{L})-\bar{C})\cdot l=0$, we know that $\rho^*(\hat{L})\cdot l=1$. So the image $\rho_*l$ of $l$ in $X$ is a line, and it passes through the vertex. Therefore, it is a ruling of the cone. 

\end{proof}

By the above proof, let $\cV$ be the birational transform of $(V_{\infty})_{\mathbb{A}^1}$ on $\cX$, and $\cH$ the extension of $H_{\mathbb{A}^1}\setminus\{0\}$ on $\cX$, we know that:
\[
\cX={\rm Proj}_{\cV}\left( \bigoplus_{k=0}^{+\infty} \cS_k\right),
\]
where
\[
\cS_k=\bigoplus_{i=0}^{k} (H^0(\cV, i \cH)\cdot u^{k-i}).
\]
From this we easily see that 
$S$ and $V$ give the same component over the vertex. 


\subsection{The general case}

In this section, we prove Theorem \ref{t-main2} in the general case. 
We first show that the cone case we prove in Section \ref{ss-ucone} can be generalized to an orbifold cone.  Let $T=\mathbb{C}^*$.

\begin{prop}\label{p-Tequiv}
Let $o\in (X,D)$ be a klt $T$-singularity. Assume a minimizer $v$ of $\hvol_{(X,D),o}$ is given by a rescaling of $\ord_{S}$ for a Koll\'ar component $S$. Then $v$ is $T$-invariant. 
\end{prop}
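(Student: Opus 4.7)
The plan is to combine the $T$-equivariant degeneration of ideals used in the proof of Lemma~\ref{l-Tmini} with the rigidity of the Rees valuation attached to a minimizing ideal provided by Theorem~\ref{p-equality}. After rescaling I take $v=\ord_F$. Let $\mu\colon Y\to X$ be the extraction of $F$, choose $m\ge 1$ so that $mF$ is Cartier on $Y$, and set $\fa:=\mu_*\cO_Y(-mF)=\fa_m(\ord_F)$. Because $\ord_F$ is assumed to be a minimizer, Theorem~\ref{p-equality} together with Remark~\ref{r-sharp} gives
\[
\lct^n(X,D;\fa)\cdot\mult(\fa)=\hvol(\ord_F)=\inf_{v'\in \Val_{X,x}}\hvol_{X,D,x}(v'),
\]
and $\ord_F$ is the unique Rees valuation of $\fa^{k}$ for some $k\ge 1$.

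Next I would run the $T$-equivariant degeneration on $\fa$. Decomposing elements of $R$ into $T$-weight pieces and keeping only the lowest-weight term produces the $T$-initial ideal $\fa^{\rm in}$, which is $T$-invariant. Exactly as in the proof of Lemma~\ref{l-Tmini}, the flat degeneration $\fa\leadsto \fa^{\rm in}$ preserves multiplicity, while semicontinuity of the log canonical threshold gives $\lct^n(X,D;\fa^{\rm in})\cdot\mult(\fa^{\rm in})\le\lct^n(X,D;\fa)\cdot\mult(\fa)$, so $\fa^{\rm in}$ is itself a minimizing ideal. Applying Theorem~\ref{p-equality} to $\fa^{\rm in}$ then yields a Koll\'ar component $F^{\rm in}$ that is the unique Rees valuation of some power $(\fa^{\rm in})^{k'}$; by that uniqueness, the $T$-invariance of $\fa^{\rm in}$ forces $F^{\rm in}$ to be $T$-invariant. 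Finally I would identify $F$ with $F^{\rm in}$: the family $\{t\cdot \fa\}_{t\in T}$, extended by $\fa^{\rm in}$ at $t=0$, fits into a flat family of ideals on $X\times \mathbb{A}^1$ via the Rees algebra construction of Section~\ref{s-degin}, whose normalized blow-up gives a flat family of models with generic-fiber exceptional divisor $t\cdot F$ and special-fiber exceptional divisor $F^{\rm in}$. Uniqueness of the Rees valuation on every fiber (Theorem~\ref{p-equality}) makes the exceptional locus irreducible throughout, so these divisors assemble into a single flat family of irreducible divisors. Combined with the $T$-invariance of $F^{\rm in}$, this flat family must be constant: $t\cdot F=F$ for all $t\in T$, and hence $F$, equivalently $v=\ord_F$, is $T$-equivariant.

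The hard part will be making the last identification rigorous: verifying that the family of normalized blow-ups is genuinely flat over $\mathbb{A}^1$ and has irreducible exceptional divisor in every fiber. I expect this to follow from an MMP-in-families argument in the spirit of Proposition~\ref{p-special}, applied to the Rees algebra from Section~\ref{s-degin}, together with a careful analysis of how the unique-Rees-valuation property supplied by Theorem~\ref{p-equality} propagates along a flat degeneration of $\fm$-primary ideals.
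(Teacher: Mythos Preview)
Your proposal follows essentially the same strategy as the paper: degenerate the ideal $\fa$ attached to $F$ to a $T$-invariant initial ideal, use minimality together with semicontinuity of $\lct$ to see that the initial ideal is still minimizing, and then build a one-parameter family of models over $X\times\bA^1$ whose generic-fiber exceptional divisor is $F$ and whose special-fiber divisor is $T$-invariant, concluding by comparing this family with the trivial one $Y\times\bA^1$.

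The one substantive difference is in how the family of models is produced. You propose taking the normalized blow-up of the flat family of ideals and then invoking the unique-Rees-valuation statement of Theorem~\ref{p-equality} fiber by fiber. You are right to flag this as the delicate point: normalized blow-up does not in general commute with restriction to fibers, so flatness and fiberwise irreducibility of the exceptional locus are not automatic. The paper avoids this issue entirely. From the equality $\lct(X,D;\fa)=\lct(X,D;\fb_\bullet)$ it arranges, for suitable $\epsilon$ and $k$, a single klt pair on $W=X\times\bA^1$ with respect to which the divisor $F_Z$ (restricting to $F$ on the generic fiber) has log discrepancy in $(0,1)$; then \cite{BCHM10} is applied once, on the total space, to extract $F_Z$ with $-F_Z$ relatively ample. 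The resulting $Z$ is then compared with $Y\times\bA^1$: both are small over $X\times\bA^1$ outside their irreducible exceptional divisors, agree over the generic fiber, and have antiample exceptional divisor, hence coincide as relative Proj's. This is exactly the ``MMP-in-families'' step you anticipated; the point is that one runs it on $W$ rather than fiberwise, so flatness is never an issue.
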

\begin{proof}
 Let $\fa$ be an ideal whose normalized blow up gives the model of extracting the Koll\'ar component $S$ (see the proof of Theorem \ref{p-equality}). Denote the degeneration of $\fa_{\bullet}:=\{\fa^p\}$ induced by the $T$-action by $\fb_\bullet:=\{\bin (\fa^p)\}$ (which in general is not necessarily equal to but only contains $(\bin(\fa))^p$ ) by a sequence $\{\kB_{\bullet}\}$ of flat families of ideals over $\bA^1$.  
 
 Because $\ord_S$ is a minimizer of $\hvol=\hvol_{(X,D),o}$, we have:
 $$\mult(\fa)\cdot \lct^n(X,D; \fa)=\hvol(\ord_S)\le \mult(\fb_{\bullet})\cdot \lct^n(X,D; \fb_{\bullet}).$$
  But $\mult(\fa)=\mult(\fb_{\bullet})$, and $ \lct(X,D,\fa)\ge  \lct(X,D,\fb_{\bullet})$ by the lower semicontinuity of log canonical thresholds. So we know that 
  $$ \lct(X,D,\fa)= \lct(X,D,\fb_{\bullet})=\lim_{k\to \infty}\lct(X,D, \frac{1}{k}\fb_k)=:c.$$
Because $S$ computes the log canonical threshold of $\fa$, we have $A(S; X,D+c\cdot \fa)=0$.
As a consequence, we can choose $\epsilon$ sufficiently small, and $k$ sufficiently large, such that the log discrepancy
$$A(S; X,D+\frac{c-\epsilon}{k}\fa^k)<1\qquad \mbox{and \ \ $({X},D+\frac{c-\epsilon}{k}\fb_k)$ is klt. }$$
This implies that  $(X_{\mathbb{A}^1}, D_{ \mathbb{A}^1}+\frac{c-\epsilon}{k}\kB_k)$ is klt and $A( S_\cY; X_{\mathbb{A}^1}, D_{ \mathbb{A}^1}+\frac{c-\epsilon}{k}\kB_k)< 1$, where  $S_\cY$ is the divisor birational to $S\times \mathbb{A}^1$. Thus  by \cite{BCHM10} we can construct a model $\mu_{\cY}: \cY\to X_{\bA^1}$ extracting only the irreducible divisor $ S_\cY$ which gives $S$ over the generic fiber. Furthermore, we can assume $-S_\cY$ is ample over $X_{\bA^1}$.

Denote $Y=\cY\times_{\mathbb{A}^1}\{1\}$ and by $\mu_{Y}\times {\rm id}: Y\times\bA^1\rightarrow X_{\bA^1}$ the extraction of $S\times\bA^1$. Then $\cY$ and  $Y\times \mathbb{A}^1$  are isomorphic in codimension 1, with anti-ample exceptional divisors over $X_{\bA^1}$. Thus we conclude that they are isomorphic:
\begin{eqnarray*}
\cY&=&{\rm Proj}_{X_{\bA^1}}\bigoplus_m(\mu_\cY)_* \mathcal{O}_{\cY}\big(-m S_{\cY}\big)\\
& =& {\rm Proj}_{X_{\bA^1}}\bigoplus_m (\mu_{Y}\times{\rm id})_* \mathcal{O}_{Y\times \bA^1}\big(-m(S\times \bA^1)\big)\\
&=&Y\times \bA^1.
\end{eqnarray*} 
\end{proof}

\begin{prop}\label{p-cover2}
Under the notation  in Section \ref{ss-deformation}, $S$ is the unique minimizer among all Koll\'ar components for $\hvol_{({C},C_D)}$ if and only if the same holds for ${C}^{(d)}$ on
$\hvol_{({C}^{(d)},C^{(d)}_{1}+C^{(d)}_2)}$.
\end{prop}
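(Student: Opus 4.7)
The plan is to combine three tools already established in this paper: the pullback/descent correspondence between Koll\'ar components of $C$ and $C^{(d)}$ under the degree $d$ Galois cover $h\colon C\to C^{(d)}$ (Lemma \ref{l-finite}), the volume scaling $\hvol_C(\ord_{h^*E})=d\cdot \hvol_{C^{(d)}}(\ord_E)$ (Lemma \ref{l-finitevolume}), and Proposition \ref{p-Tequiv} which forces any Koll\'ar-component minimizer of the normalized volume on a $T$-singularity to be automatically $T$-equivariant (applied here with $T=\bC^*$, whose finite subgroup $G=\bZ/d$ is the Galois group of $h$). Write $S$ and $S^{(d)}$ for the canonical Koll\'ar components at the vertices of $C$ and $C^{(d)}$; then $h^*S^{(d)}=S$, and the scaling gives $\hvol_C(\ord_S)=d\cdot \hvol_{C^{(d)}}(\ord_{S^{(d)}})$. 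By Proposition \ref{p-cover}, one of them is a (not necessarily unique) Koll\'ar minimizer iff the other is, so in each direction I only need to promote ``minimizer'' to ``unique minimizer''.

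For the forward implication, I would assume $S$ is the unique Koll\'ar minimizer on $C$ and take any Koll\'ar minimizer $E$ on $C^{(d)}$. Then $h^*E$ is a Koll\'ar component on $C$ by Lemma \ref{l-finite}, and
$$\hvol_C(\ord_{h^*E})=d\cdot \hvol_{C^{(d)}}(\ord_E)=d\cdot\hvol_{C^{(d)}}(\ord_{S^{(d)}})=\hvol_C(\ord_S),$$
so $h^*E$ itself achieves the Koll\'ar minimum on $C$. Uniqueness forces $h^*E=S=h^*S^{(d)}$, and since $h$ is a finite Galois cover and both $E$ and $S^{(d)}$ are prime divisors through the vertex of $C^{(d)}$ with the same reduced preimage under $h$, I conclude $E=S^{(d)}$.

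For the converse, I would assume $S^{(d)}$ is the unique Koll\'ar minimizer on $C^{(d)}$ and let $F$ be any Koll\'ar minimizer on $C$. The key step is to show $F$ is $G$-invariant so that it descends to $C^{(d)}$. By Theorem \ref{t-approx}, the infimum of $\hvol_C$ over Koll\'ar components equals the infimum over all of $\Val_{C,o}$, so $\ord_F$ is in fact a minimizer of $\hvol_C$ among all valuations; Proposition \ref{p-Tequiv} then makes $\ord_F$ automatically $T$-equivariant, and in particular $F$ is $G$-invariant. By the second half of Lemma \ref{l-finite}, $F=h^*E$ for some Koll\'ar component $E$ on $C^{(d)}$, and Lemma \ref{l-finitevolume} gives $\hvol_{C^{(d)}}(\ord_E)=\frac{1}{d}\hvol_C(\ord_F)=\hvol_{C^{(d)}}(\ord_{S^{(d)}})$. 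The assumed uniqueness of $S^{(d)}$ then yields $E=S^{(d)}$, hence $F=h^*S^{(d)}=S$.

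The main technical obstacle is the converse direction: arbitrary Koll\'ar components on $C$ need not be $G$-invariant, so without Proposition \ref{p-Tequiv} there is no mechanism forcing an arbitrary Koll\'ar minimizer on $C$ to descend to $C^{(d)}$. Theorem \ref{t-approx} plays the auxiliary role of upgrading a Koll\'ar minimizer into a global minimizer of $\hvol_C$, which is precisely the hypothesis required by Proposition \ref{p-Tequiv}.
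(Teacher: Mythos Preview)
Your proof is correct and follows essentially the same approach as the paper. The paper's proof is terser and only spells out the converse direction (unique on $C^{(d)}$ implies unique on $C$), since that is the only direction needed in the proof of Theorem \ref{t-main2}; your explicit invocation of Theorem \ref{t-approx} to upgrade a Koll\'ar-component minimizer on $C$ to a global minimizer of $\hvol_C$ (so that the hypothesis of Proposition \ref{p-Tequiv} is met) makes precise a step the paper glosses over.
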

\begin{proof}By Proposition \ref{p-Tequiv}, any minimizing Koll\'ar component  $E$ of $\hvol_{(C,C_D)}$ is $T$-invariant. Therefore it is $G=\mathbb{Z}/d$ equivalent. So $E$ is the pull back of a Koll\'ar component on $C^{(d)}$ by Lemma \ref{l-finite}, which can only be the canonical component obtained by blowing up the vertex by our assumption and Lemma \ref{l-finitevolume}. 
\end{proof}

\begin{proof}[Proof of Theorem \ref{t-main2}] 
By Theorem \ref{t-cone} and Proposition \ref{p-cover2}, we know that for the coarse moduli space of an orbifold cone over a K-semistable log Fano pair, the only Koll\'ar component which minimizes the normalized volume function is given by the canonical component. 

Now we consider the case of a general klt singularity $o\in (X,D)$. Because $(S, \Delta_S)$ is K-semistable, by Theorem \ref{t-main}, $\ord_S$ obtains the minimum of $\hvol_{(X,D),o}$. Let us assume that there is another divisor $F$ over $(X,o)$ such that 
$$\hvol_{(X,D),o}(\ord_F)=\hvol_{(X,D),o}(\ord_S).$$
Then by Theorem \ref{t-divisor}, $F$ is indeed a Koll\'{a}r component. 

As in Section \ref{ss-deformation}, let $\pi\colon \cW\to X\times \mathbb{A}^1$ be the flat family which degenerates $X$ to $W_0=Y_0\cup \bar{C}$, where $Y_0\cong Y$ extracts $S$ over $X$ and $\bar{C}$ is the the coarse moduli space of the orbifold cone  over $S=\bar{C}\cap Y_0$. Then as in the proof Proposition \ref{p-Tequiv}, let $\fa$ be an ideal whose normalized blow up gives the model of extracting the Koll\'ar component $F$ (see the proof of Theorem \ref{p-equality}). Denote the degeneration of $\{\fa_{\bullet}\}:=\{\fa^p\}$ by $\fb_{\bullet}:=\{\bin (\fa^p)\}$.  
 
Denote by $S_0$ the induced Koll\'{a}r component over $(C, C_D, o)$ (see Proposition \ref{p-degvol}). We then have:
 $$\mult(\fa)\cdot \lct^n(X,D, \fa)=\hvol_{X}(\ord_F)= \hvol_{\bar{C}}(\ord_{S_0})\le \mult(\fb_{\bullet})\cdot \lct^n(\bar{C},\bar{C}_D, \fb_{\bullet}),$$
 where the last inequality is from the assumption that $S_0\cong S$ is K-semistable and Theorem \ref{t-main}. 
 On the other hand, we have $\mult(\fa)=\mult(\fb_{\bullet})$ and $\lct(X,D,\fa)\ge  \lct(\bar{C},\bar{C}_D, \fb_{\bullet})$ (see the proof of Lemma \ref{l-deg}). So we know that 
  $$ \lct(X,D,\fa)= \lct(\bar{C},\bar{C}_D, \fb_{\bullet})=\lim_{k\to \infty}\lct(\bar{C}, \bar{C}_D, \frac{1}{k}\fb_k),$$
which we denote by  $c$. 
In particular, for we can choose $\epsilon$ sufficiently small, and $k$ sufficiently large, such that the log discrepancy
$$A(F; X,D+(c-\epsilon)\fa)<\delta \qquad \mbox{for sufficiently small $\delta>0$}$$
and $({C},C_D+(c-\epsilon)\frac{1}{k}\fb_k)$ is klt. 
Thus  similar to the proof of Proposition \ref{p-Tequiv}, by \cite{BCHM10} we can construct a model $\psi'_1: \cZ'\to \cW$ extracting only the irreducible divisor $F_{\cZ'}$ which gives $F$ over the generic fiber. Furthermore, we can assume $-F_{\cZ'}$ is ample over $\cW$. 
 
We claim that the special fiber $Z'_0\to W_0$ is a normal model which also only extracts a Koll\'ar component over $\bar{C}$. In fact, let $\nu: (Z'_0)^{\rm{n}}\to Z'_0$ be the normalization and $\rho\colon (Z'_0)^{\rm{n}}\to W_0$ be the composite morphism.  Locally over the vertex $v$ of $\bar{C}$,  we have
\begin{eqnarray}\label{e-adjun}
\nu^*((K_{\cZ'}+Z'_0+F_{\cZ'}+(\phi'^{-1})_{*}D_{\bA^1})|_{Z'_0})&=:&K_{(Z_0')^{\rm n}}+G+\rho_*^{-1}C_D\nonumber\\
&\ge& K_{(Z_0')^{\rm n}}+{\rm Ex}(\rho)+\rho_*^{-1}C_D
\end{eqnarray}
by \cite[Proposition 4.5]{Kol13}.
Denote the pull back of $F_0$ on $(Z'_0)^{\rm{n}}$ by $\tilde{F}_0$. Then we have:
\begin{eqnarray*}
\hvol_{({C}, {C}_D),o_C}(\ord_{S_0})&=&\hvol_{(X,D),o}(\ord_F)\\
&=&(-(K_{\cZ'}+F_{\cZ'}+(\phi'^{-1})_{*}D_{\bA^1})|_{F})^{n-1}\\
&=&(-\nu^*((K_{\cZ'}+F_{\cZ'}+(\phi'^{-1})_{*}D_{\bA^1})|_{Z'_0})|_{\tilde{F}_0})^{n-1}\\
&\ge &(-\nu^*((K_{\cZ'}+F_{\cZ'}+(\phi'^{-1})_{*}D_{\bA^1})|_{Z'_0})|_{(\tilde{F}_0)_{\rm red}={\rm Ex}(\rho)})^{n-1}\\
&\ge &\vol_{(C,C_D),o_C}((Z'_0)^{\rm n}) \ \ \ \mbox{(by Definition \ref{d-modelv} and \eqref{e-adjun})}.
\end{eqnarray*}
Thus we conclude that the volume of the model $(Z'_0)^{\rm n}$ is equal to the minimum of the normalized volume $\hvol_{{(C,C_D)},o_C}$.  It follows from the argument in the proof of Theorem \ref{t-divisor} that $\tilde{F}_0$ is reduced and yields a Koll\'ar component over $o_C$. This implies that $\nu$ is isomorphic along the generic point of $\tilde{F}_0$, and thus $Z'_0$ is regular along the generic point of $F_0$. Since $Z'_0$ is Cohen-Macaulay, we conclude that $Z'_0$ is normal by Serre's criterion.   By the proof in the cone case Theorem \ref{t-cone}, $F_0$ has to be the same as the canonical component $S_0$. The rest of the construction can be seen as the reverse of the construction at the beginning of section \ref{ss-ucone}. 

In particular, the birational transform $\bP'$ of $\bar{C}$ in $\cZ'$ is the extraction of the canonical component.  
Thus there is a morphism $\mathbb{P}'\to S$. Let  $l$ be the fiber class of $\mathbb{P}'\to S$. Consider $K_{\cZ'}+\phi_*^{-1}(D_{\mathbb{A}^1})+F_{\cZ'}$, which satisfies that 
$$\big(K_{\cZ'}+\phi_*^{-1}(D_{\mathbb{A}^1})+F_{\cZ'}\big)|_{F_0}=K_{F_0}+\Delta_{F_0}$$
is anti-ample, 
$$ \big(K_{\cZ'}+\phi_*^{-1}(D_{\mathbb{A}^1})+F_{\cZ'}\big)|_S=K_{S}+\Delta_S$$
is anti-ample, and  
$$\big(K_{\cZ'}+\phi_*^{-1}(D_{\mathbb{A}^1})+F_{\cZ'}\big)\cdot l=0.$$
Thus $l$ is an extremal ray in $N_1(\cZ'/X_{\mathbb{A}^1})$. 

Hence we know that there is a morphism $\phi'_1: \cZ'\to \cY'$ which contracts $\mathbb{P}'$, and $\cY'$ admits a morphism $\chi\colon \cY'\to X_{\mathbb{A}^1}$. Restricting over $0$, the central fiber is the birational model $\mu\colon Y\to X$ which extracts $S$. On the other hand, if we let $\mu_{Y_F}\colon Y_F\to X$ be the birational model which extracts the Koll\'ar component $F$, then $\mu_{Y_F}\times {\rm id}: Y_F\times\bA^1\rightarrow X\times{\bA^1}$ is isomorphic to the restriction of $\chi$ over $t\neq 0$. Note that by construction we have the following commutative diagram (in which the first column is not used in the current proof but only in the previous arguments):
\begin{equation*}
\xymatrix @R=2pc @C=2pc
{
\cZ \ar^{\psi_1}[r] \ar^{\phi_1}[d] \ar^{\phi} [rd]   & \cW  \ar^{\pi}[d] &  \ar_{\psi'_1}[l] \cZ' \ar^{\phi'_1}[d]  \ar_{\phi'} [ld]&\\
Y\times\bA^1 \ar^{\mu_{\bA^1}}[r] & X\times\bA^1  & \ar_{\chi}[l] \cY' \ar@{-->}[r] & Y_F\times \bA^1 \\
Y\times\{0\} \ar[r] \ar@{^{(}->}[u] & X\times\{0\} \ar@{^{(}->}[u] &\ar[l]  \ar@{^{(}->}[u] Y\times\{0\} &
}
\end{equation*}

As $Y_F\times \mathbb{A}^1$ and $\cY'$ is isomorphic in codimension 1, if we denote by $F_{\cY'}$ the push forward of $F_{\cZ'}$ on $\cY'$, we have
\begin{eqnarray*}
\cY' &=&{\rm Proj}_{X_{\bA^1}}\bigoplus_m{\chi}_* \mathcal{O}_{\cY'}\big(-m F_{\cY'}\big)\\
& =& {\rm Proj}_{X_{\bA^1}}\bigoplus_m{(\mu_{Y_F}\times{\rm id})}_* \mathcal{O}_{Y_F\times \bA^1}\big(-m(F\times \bA^1)\big)\\
&=&Y_F\times \bA^1.
\end{eqnarray*}
Considering the central fiber over $0$, this implies that $Y_F=Y$ and hence $F=S$. 
\end{proof}

\section{Minimizing Koll\'{a}r component is K-semistable}\label{s-Ksta}

In this section, we aim to prove the a Koll\'ar component is minimizing only if it is K-semistable. The method used in the proof of this result is motivated by the one in the study of toric degenerations (see e.g. \cite[Section 3.2]{Cal02}, \cite[Proposition 2.2]{AB04} and \cite[Proposition 3]{And13}). In particular the argument allows us to reduce two-step degenerations to a one-step degeneration.

\begin{proof}[Proof of Theorem \ref{t-mintok}]
Let $(X, D, o)$ be a klt singularity with $X={\rm Spec} (R)$. Assume that $S$ is a Koll\'{a}r component that minimizes $\hvol_{(X, D),o}$ and appears as the exceptional divisor in a plt blow-up $\mu: Y\rightarrow X$. Let $\Delta_S$ be the divisor on $S$ satisfying $K_{Y}+(\mu^{-1})_*D+S|_S=K_S+\Delta_S$. 
By Theorem  \ref{thm-Ksemi} and \ref{p-cover}, to show that $(S,\Delta_S)$ is K-semistable, it suffices to show that the canonical component is a minimizer of $\hvol_{(C,C_D)}$, where $(C, C_D)$ is the degeneration  associated to $S$ (see the degeneration construction in Section \ref{ss-deformation} and \ref{ss-ucone}).

By Proposition \ref{l-Tmini}, we only need to show that  
$$\hvol_{(C,C_D)}(\ord_{S_0}) \le \hvol_{(C,C_D)}(\ord_{F})$$
 for any $\bC^*$-invariant Koll\'ar component $F$ over the vertex $o_C\in (C,C_D)$. 
 Let $(\cC, \cE)$ be the associated special degeneration which degenerates $(C,C_D)$ to a pair $(C_0, E_0)$ where $C_0$ is an orbifold cone over $(F, \Delta_F)$ (see Section \ref{ss-deformation}).
 Then we have a $ \mathbb{Z}_{\ge 0}\times \mathbb{Z}_{\ge 0}$-valued order function, which yields a rank-2 valuation, defined on $R$:
\begin{eqnarray}\label{eq-Z2val}
w: R& \longrightarrow &  \mathbb{Z}_{\ge 0} \times \mathbb{Z}_{\ge 0}\\
f & \mapsto &\left( \ord_S(f), \ord_F(\bin(f)) \right).\nonumber
\end{eqnarray}
We give $  \mathbb{Z}_{\ge 0}\times \mathbb{Z}_{\ge 0}$ the following lexicographic order: $(m_1, u_1)<(m_2, u_2)$ if and only if $m_1<m_2$, or $m_1=m_2$ and $u_1<u_2$. If we denote by $\Gamma$ the valuative monoid and denote the associated graded ring by
\[
\gr_w R=\bigoplus_{(m, u)\in \Gamma} R_{\ge (m, u)}/ R_{> (m, u)},
\]
then it is easy to see that $C_0={\rm Spec}_{\bC} \left(\gr_w R\right)$. 
We will also denote:
\[
R^*=\bigoplus_{m\in \mathbb{N}} R_{\ge m}/R_{>m}=\bigoplus_{m\in \mathbb{N}} R^*_m.
\]
Then ${\rm Spec}(R^*)=C$ and $\gr_w R=\gr_{\ord_F}(R^*)$. Moreover if we define the extended Rees ring of $R^*$ with respect to the filtration associated to $\ord_{F}$ 
(see Section \ref{s-degin}):
\[
\mathcal{A}':=\bigoplus_{k\in \bZ} \cA_k:=\bigoplus_{k\in \bZ} \mathfrak{b}_k t^{-k} \subset R^*[t, t^{-1}],
\]
where $\mathfrak{b}_k=\{f\in R^*; \ord_{F}(f)\ge k\}$. Then the flat family $\cC \rightarrow \bA^1$ is given by ${\rm Spec}_{\bC[t]}\left(\mathcal{A}'\right)$. In particular, we have
\[
\cA' \otimes_{\bC[t]}\bC[t, t^{-1}]\cong R^*[t, t^{-1}], \quad \cA' \otimes_{\bC[t]}\bC[t]/(t)\cong \gr_w R=\gr_{\ord_F}(R^*).
\]

\bigskip

Pick up a set of homogeneous generators $\bar{f}_1, \dots, \bar{f}_p$ for $\gr_w R$ with $\deg(\bar{f}_i)=(m_i, u_i)$. Lift them to generators $f_1, \dots, f_p$ for $R^*$ such that $f_i\in R^*_{m_i}$. 
Set $P=\mathbb{C}[x_1, \dots, x_p]$ and give $P$ the grading by $\deg(x_i)=(m_i, u_i)$ so that the surjective map 
$$\rho_0\colon P\rightarrow \gr_w R \qquad\mbox{ given by }\qquad x_i\mapsto f_i$$ is a map of graded rings. 
Let $\bar{g}_1, \dots, \bar{g}_q\in P$ be a set of homogeneous generators of the kernel ${\rm Ker}(\rho_0)$ and assume $\deg(\bar{g}_j)=(n_j, v_j)$. 

Since $\bar{g}_j(\bar{f}_1, \dots, \bar{f}_p)=0 \in {\rm gr}_wR$, it follows 
$$\bar{g}_j(f_1, \dots, f_p) \in (R_{n_j})_{>v_j} \qquad \mbox{ for each } j.$$ 
By
the flatness of $\cA'$ over $\bC[t]$, there exist liftings $g_j\in \bar{g}_j+(P_{n_j})_{> v_j}$ of the relation $\bar{g}_j$ such that 
$$g_j(f_1, \dots, f_p)=0  \mbox{ for } 1\le j\le q .$$ 
So $g_j{}'s$ form a Gr\"{o}bner basis of $J$ with respect to the order function $\ord_F$, where $J$ is the kernel of the 
surjection $P\rightarrow R^*$. In other words, if we let $K=(\bar{g}_1, \dots, \bar{g}_q)$ denote the kernel $P\rightarrow {\rm gr}_wR$, then $K$ is the initial ideal of $J$ with respect to the order determined by 
$\ord_F$. As a consequence, we have:
\[
\cA'=P[\tau]/(\tilde{g}_1, \dots, \tilde{g}_q),
\]
where $\tilde{g}_j=\tau^{v_j} g_j(\tau^{-u_1} x_1, \dots, \tau^{-u_p} x_p)$.

\bigskip

Now we lift $f_1, \dots, f_p$ further to generators $F_1, \dots, F_p$ of $R$. Then we have: 
$$g_j(F_1, \dots, F_p)\in R_{> n_j}.$$ 
Let $\cR'$ be the extended Rees algebraic associated to $ \ord_S$ on $R$ (see Section \ref{s-degin}). By the flatness of $\cR'$ over $\bC[t]$, there exist $G_j\in g_j+P_{> n_j}$ such that 
$$G_j(F_1, \dots, F_p)=0.$$ Let $I$ be the kernel of $P\rightarrow R$. Then $G_j{}'s$ form a Gr\"{o}bner basis with respect to the valuation $\ord_S$ and the associated initial ideal is $J$. As a consequence, we have:
\[
\cR'=P[\zeta]/(\tilde{G}_1, \dots, \tilde{G}_q)
\]
where $\tilde{G}_j=\zeta^{n_j} G_j(\zeta^{-m_1} x_1, \dots, \zeta^{-m_p} x_p)$. 

In summary, we have a $(\bC^*)^2$-action on $\bC^p$ generated by two 1-parameter subgroups $\lambda_0(t)=t^{\bold m}$ and $\lambda'(t)=t^{\bold u}$ where ${\bold m},{\bold u}\in \mathbb{N}^{p}$. $\lambda_0$ degenerates $(X, D)$ to $(C, C_D)$ and $\lambda'$ degenerates $(C, C_D)$ further to $(C_0, E_0)$. 

\bigskip

\begin{lem} For $0<\epsilon\ll 1$ and $\epsilon\in \bQ$, there is a family (parametrized by $\epsilon$) of subgroups $\lambda_{\epsilon}\colon \mathbb{C}^*\to (\mathbb{C}^*)^{2}$ such that $\lambda_\epsilon(t)$ degenerates $X$ to $C_0$ as $t\to 0$.
\end{lem}

\begin{proof}
Let $(n'_j, v'_j)$ be a degree of any homogeneous component of $G_j-\bar{g}_j$ and consider the difference $(n'_j, v'_j)-(n_j, v_j)$. Note that $(n'_j, v'_j)>(n_j, v_j)$. 
Denote by $B\subset \mathbb{Z}\times \mathbb{Z}$ the finite set consisting of such differences $(n'_j, v'_j)-(n_j, v_j)$, together with $0$ and the two generators of $\mathbb{N}\times\mathbb{N}$. Let $M$ be a positive integer that is larger than all absolute values of coordinates of pairs $B$ and let $\epsilon$ be sufficiently small such that $1> M \epsilon$. 
After tensoring with $\mathbb{Q}$, we can define
$$\pi_\epsilon=e^*_0+\epsilon e^*_{1}\colon \mathbb{Q}^2\to \mathbb{Q},$$
where $e^*_0$ and $e^*_1$ denote the first and second projection on the product $\mathbb{Z}^2=\bZ\times\bZ$. 
 
For $\epsilon> 0$ suficiently small, we define $\lambda_\epsilon: \bC^*\rightarrow {\rm GL}(p,\bC)$ to be the one parameter subgroup corresponding to the prime integral vector $N_{\epsilon}\cdot\pi_{\epsilon}$ in $\mathbb{Q}_{>0}\cdot \pi_\epsilon$: 
$$
\lambda_\epsilon(t)\cdot z=(t^{N_\epsilon(m_1+\epsilon u_1) } z_1, \cdots, t^{N_\epsilon (m_p+\epsilon u_p)}z_p) \text{ for any } (z_1,\dots, z_p)\in \bC^p.
$$
Note that in this setting, $\pi_0$ corresponds to $\ord_S$. 

Now to see that $\lambda_\epsilon$ degenerates $X$ to $C_0$, note that for any monomial $x^{\bold p}$ with bidegree $(m,u)=(\bold{ p\cdot m, p\cdot u})$, its degree under $\lambda_{\epsilon}$ is given by ${N}_{\epsilon}\cdot\pi_{\epsilon}(m,u)$. Then from our construction, we have $$\pi_\epsilon (n'_j, v'_j)> \pi_\epsilon (n_j, v_j)$$ 
where $(n'_j, v'_j)$ is a degree of any homogeneous part of $G_j-\bar{g}_j$ such that $(n'_j, v'_j)>(n_j, v_j)$.
Thus it follows that the initial term of $G_j$ with respect to the weight function $\pi_\epsilon$ is exactly $\bar{g}_j$. 
\end{proof}

Fix any $\lambda_{\epsilon}:\mathbb{C}^*\to (\mathbb{C}^*)^2$ for $0<\epsilon\ll 1$ as above. Then $\bC^*$ acts on $C_0$ via $\lambda_{\epsilon}$ and $C_0\setminus \{o_{C_0}\}$ is a $\bC^*$-Seifert bundle (see \cite{Kol04}) where $o_{C_0}$ is the vertex of $C_0$. We claim that the quotient $(C_0\setminus \{o_{C_0}\})/\lambda_\epsilon$ (which we will simply denote by $C_0/\lambda_{\epsilon}$) yields a Koll\'{a}r component $S_\epsilon$ over $(C_0,E_0)$. Furthermore, it induces Koll\'ar components over $(C, C_D)$ and $(X, D)$ which are both isomorphic to $S_\epsilon$ and such that the associated degenerations degenerate $(C, C_D)$ and $(X,D)$ to $(C_0, E_0)$.  
By abuse of notations we will also denote these Koll\'{a}r components over $(C, C_D)$ and $(X, D)$ by $S_{\epsilon}$.

Assuming this claim is true for now, we then have: 
\[
\hvol_{(X,D)}(\ord_{S_\epsilon})=\hvol_{(C_0,E_0)}(\ord_{S_\epsilon})=\hvol_{(C,C_D)}(\ord_{S_\epsilon}).
\]
In the rational coweight cone $N_{\mathbb{Q}}\cong \mathbb{Q}^2$, the one parameter subgroup $\lambda_0(t)$, which degenerates $(X, D)$ to $(C, C_D)$, corresponds to the coweight vector $(1,0)$ and 
the one parameter subgroup $\lambda'(t)$, which degenerates $(C,C_D)$ to $(C_0,E_0)$, corresponds to the coweight vector $(0,1)$. 
By construction $\pi_{\epsilon}$ corresponds to the coweight $(1,\epsilon)$ and induces the one parameter subgroup $\lambda_\epsilon: \bC^*\rightarrow {\rm GL}(p,\bC)$ which preserves $(C_0, E_0)$. 

Consider the valuations $\wt_{\lambda_t}\in \Val_{C_0, o_{C_0}}$ induced by the coweight vector of the form $(1, t)\in N_{\mathbb{R}}$ for any $t\in [0, \infty)$. Note that $\wt_{\lambda_t}$ is just the valuation associated to the Reeb vector field on $C_0$ that generates $\lambda_t$ (see \cite{LX17}). 
Define the function $f(t)=\hvol_{(C_0,E_0)}(\wt_{\lambda_t})$. Then we know that $f(t)$ is a smooth convex function on $[0,+\infty)$ (see \cite[C.2]{MSY08} and \cite[Proposition 2.21]{LX17}). 
It satisfies that
$$f(\epsilon)=\hvol_{(C_0,E_0)}(\ord_{S_\epsilon})=\hvol_{(X,D)}(\ord_{S_\epsilon})\ge \hvol_{(X,D)}(\ord_{S})=\hvol_{(C_0, E_0)}(\ord_{S_0})=f(0).$$
The above inequality is because $\ord_S$ is assumed to be a minimizer of $\hvol_{(X,D),o}$.
Using the convexity, this implies that $f(t)$ is an increasing function on $t$. Recall that the coweight $(0,1)=\lim_{t\rightarrow+\infty}t^{-1}(1,t)$ corresponds to the Koll\'ar component $F_0$ which is the degeneration of $F$ over $C_0$. By the rescaling invariance of $\hvol$, we see that we have  $\lim_{t\to +\infty}f(t)=\hvol_{(C_0,E_0)}(\ord_{F_0})$ (cf. Remark \ref{p-degvol} or \cite[Proof of Theorem 3.5]{LX17}). So we indeed have :
\[
\hvol_{(C, C_D)}(\ord_F)=\hvol_{(C_0,E_0)}(\ord_{F_0})\ge \hvol_{(C_0,E_0)}(\ord_{S_0})= \hvol_{(C,C_D)}(\ord_{S_{0}}) .
\]

\bigskip

It remains to verify the claim on $S_\epsilon$. For that
we define a filtration:
\begin{eqnarray*}
\cF^N R&=&{\rm Span}_{\bC}\left\{F_1^{a_1}\dots F_p^{a_p};\  \pi_\epsilon  \left( \sum_{i=1}^p a_i (m_i, u_i) \right) \ge N\right\}\\
&=& \{ g\in R; \text{ there exists } G\in P \text{ such that } G|_X=g \text{ and } \deg_{\pi_\epsilon} (G)\ge N  \}.
\end{eqnarray*}
Then $\{\cF^N R\}$ is the filtration induced by the weighted blow up $\widehat{\bC^p}\rightarrow \bC^p$. The associated graded ring of $\{\cF^N R\}$ is isomorphic to $\gr_w R$ with the grading given by the weight function $\pi_{\epsilon}\circ w$. Because $\gr_wR$ is a normal integral domain, by Lemma \ref{lem-fil2val}, the above filtration is induced by a valuation $w_\epsilon$ on $R$, which is a divisorial valuation. Indeed, denote the strict transform of $X$ under the weighted blow-up (i.e., filtered blow up) $\widehat{\bC^p}\rightarrow \bC^p$ by $\hat{X}$. Then, by the discussion in Section \ref{sec-filtration}, the exceptional divisor $\hat{X}\rightarrow X$ is isomorphic to $S_\epsilon=C_0/\lambda_\epsilon:=(C_0\setminus \{o_{C_0}\})/\bC^*$ and $w_\epsilon=c\cdot \ord_{S_\epsilon}$ for some $c>0$. By Proposition \ref{prop-Kolseq},
 $$(S_\epsilon, \Delta_{\epsilon})=(C_0, E_0) / \lambda_\epsilon:=(C_0\setminus \{o_{C_0}\},  E_0\setminus \{o_{C_0}\})/\bC^*$$
is indeed a klt log Fano pair and a Koll\'{a}r component over $o\in (X, D)$.  

\end{proof}

\begin{prop}\label{prop-Kolseq}
With the above notations, for any $0<\epsilon\ll 1$ with $\epsilon\in \mathbb{Q}_{+}$, let $(S_\epsilon, \Delta_\epsilon)=(C_0, E_0)/\lambda_\epsilon$. Then $S_\epsilon$ is a Koll\'{a}r component over  $o\in (X,D)$ and $o_C\in (C,C_D)$.
\end{prop}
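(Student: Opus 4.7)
The assertion is that perturbing the one-parameter subgroup $\lambda_0$ (which corresponds to $\ord_S$) in the direction of $\lambda'$ (corresponding to $\ord_F$) by a small rational amount still produces a Koll\'ar component. I would first identify $S_\epsilon$ concretely as the exceptional divisor of a model, then verify the plt/ampleness conditions by a semicontinuity argument built around the two base points $\epsilon=0$ (recovering $S$) and the degeneration to $Y_0$.

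\textbf{Step 1: Identifying the model.} For $\epsilon\in \mathbb{Q}_{>0}$ small, all weights $m_i-\epsilon u_i$ are positive rationals (since $m_i=\ord_S(f_i)>0$ and $u_i\ge 0$). Clearing denominators gives an honest weighted blow up $\Phi_\epsilon\colon \widehat{\mathbb{C}}^p\to \mathbb{C}^p$. Let $\widehat{X}_\epsilon\to X$ be the strict transform and $\hat\mu_\epsilon$ the induced morphism. By the description of filtered blow ups recalled in Section~\ref{sec-filtration} together with the computation of $\gr_{\pi_\epsilon}R$ already carried out in the proof of Theorem~\ref{t-mintok}, the exceptional locus of $\hat\mu_\epsilon$ equals $\mathrm{Proj}(\gr_{\pi_\epsilon}R)=Y_0/\lambda_\epsilon=S_\epsilon$. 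Irreducibility of $S_\epsilon$ follows because $Y_0$ is irreducible (as a flat limit of $X$) and $\lambda_\epsilon$ acts with one-dimensional generic orbits. Thus $\hat\mu_\epsilon\colon\widehat{X}_\epsilon\to X$ is a proper birational morphism, isomorphic over $X\setminus\{o\}$, whose exceptional set is the prime divisor $S_\epsilon$.

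\textbf{Step 2: $\mathbb{Q}$-Cartier ampleness of $-S_\epsilon$.} The tautological line bundle $\mathcal{O}_{\widehat{\mathbb{C}}^p}(-E)$ of the weighted blow up is $\Phi_\epsilon$-ample and $\mathbb{Q}$-Cartier; pulling back to $\widehat{X}_\epsilon$ and applying the projection formula gives that $-S_\epsilon$ is $\mathbb{Q}$-Cartier and $\hat\mu_\epsilon$-ample. This is the local analogue of the standard computation for weighted blow ups.

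\textbf{Step 3: The plt condition.} This is the main point. I would argue as follows. The pair $(S,\Delta_S)$ is klt log Fano since $S$ is a Koll\'ar component, and the $\epsilon\to 0$ limit of the quotient $Y_0/\lambda_\epsilon$ is exactly $X_0/\lambda_0=\mathrm{Proj}\,A=S$ (since the $m$-graded piece of $\gr_w R$ is $A$). The $(\mathbb{C}^*)^2$-action on $Y_0$ produces a natural family of quotients $(S_\epsilon,\Delta_{\epsilon})$ interpolating between $(S,\Delta_S)$ at $\epsilon=0$ and the orbifold base of the cone $Y_0$ at the other extreme. Openness of the klt condition applied to this family gives that $(S_\epsilon,\Delta_\epsilon)$ is klt log Fano for all sufficiently small rational $\epsilon>0$, and the anti-ampleness of $-(K_{\widehat{X}_\epsilon}+S_\epsilon+(\hat\mu_\epsilon)_*^{-1}D)|_{S_\epsilon}\sim_{\mathbb{Q}}-(K_{S_\epsilon}+\Delta_\epsilon)$ is an open condition preserved from $\epsilon=0$. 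Combined with Step 2 (the plt inversion of adjunction from $(S_\epsilon,\Delta_\epsilon)$ klt together with $-S_\epsilon$ $\hat\mu_\epsilon$-ample), this yields that $(\widehat{X}_\epsilon, S_\epsilon+(\hat\mu_\epsilon)_*^{-1}D)$ is plt.

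\textbf{Main obstacle.} The principal difficulty is Step~3: the family $\{\widehat{X}_\epsilon\}$ does not sit inside one fixed ambient scheme, since the weighted blow up varies with $\epsilon$, so this is not a deformation in the conventional flat-family sense. The way around this is to work inside the total space of the $(\mathbb{C}^*)^2$-equivariant geometry on $Y_0$: realize $(S_\epsilon,\Delta_\epsilon)$ as the quotient of $(Y_0,E_0)$ by the rational one-parameter subgroup $\lambda_\epsilon$, so that all $S_\epsilon$ live naturally as GIT quotients of a single $Y_0$ under varying linearizations. Lower semicontinuity of log discrepancies in such equivariant families, plus the known plt structure at $\epsilon=0$ coming from the assumption that $S$ is a Koll\'ar component of $(X,D)$ and that $F$ is a Koll\'ar component of $(X_0,D_0)$, then delivers the plt conclusion for $\epsilon$ sufficiently small rational.
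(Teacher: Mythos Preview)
Your Steps 1 and 2 are essentially the same as the paper's identification of $S_\epsilon$ as the exceptional divisor of the filtered blow up associated to $w_\epsilon$, so there is no issue there.

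The gap is in Step 3. Your semicontinuity argument is anchored at the wrong point. You claim that ``the $\epsilon\to 0$ limit of the quotient $Y_0/\lambda_\epsilon$ is exactly $X_0/\lambda_0=\mathrm{Proj}\,A=S$ (since the $m$-graded piece of $\gr_w R$ is $A$)''. This is false: the degree-$m$ piece of $\gr_w R$ is $\bigoplus_u (A_m)_{\ge u}/(A_m)_{>u}=\gr_{\ord_F}(A_m)$, which has the same dimension as $A_m$ but in general a different multiplication. Hence $Y_0/\lambda_0=\mathrm{Proj}_m(\gr_w R)$ is the initial degeneration of $S$ under the filtration induced by $\ord_F$, not $S$ itself. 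So openness of klt starting from the known pair $(S,\Delta_S)$ does not apply, and your fallback (``lower semicontinuity of log discrepancies in equivariant families, plus the known plt structure at $\epsilon=0$'') inherits the same defect: you never establish that the $\epsilon=0$ endpoint is klt.

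The paper bypasses semicontinuity entirely and argues directly. Since $F$ is a Koll\'ar component of $(X_0,D_0)$, the pair $(Y_0,E_0)$ is itself klt (it is the orbifold cone over the klt log Fano $(F,\Delta_F)$ with positive cone index $A_{X_0,D_0}(F)$). Then one invokes Koll\'ar's structure theory of Seifert $\mathbb{G}_m$-bundles \cite{Kol04}: for \emph{any} $\mathbb{C}^*$-action $\lambda_\epsilon$ in the Reeb cone of $(Y_0,E_0)$, the orbifold quotient $(S_\epsilon,\Delta_\epsilon)=(Y_0\setminus\{v\},E_0\setminus\{v\})/\lambda_\epsilon$ is automatically klt log Fano, with $c_1(Y_0^\circ/S_\epsilon)$ a positive rational multiple of $-(K_{S_\epsilon}+\Delta_\epsilon)$. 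This gives klt-ness of $(S_\epsilon,\Delta_\epsilon)$ for every small rational $\epsilon>0$ without any limiting procedure. Inversion of adjunction then upgrades this to the plt statement on the filtered blow up of $X$, exactly as in your Step 2. The moral is that the hypothesis ``$F$ is a Koll\'ar component of $(X_0,D_0)$'' is what makes $(Y_0,E_0)$ klt, and that is the input you should use, rather than trying to deform back to $(S,\Delta_S)$.
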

\begin{proof}
For $0<\epsilon\ll 1$ with $\epsilon\in \mathbb{Q}_{+}$, $\lambda_\epsilon$ is associated to a $\bC^*$-action. We have a log orbifold $\bC^*$-bundle
$\pi: (C_0^{\circ}, E_0^{\circ}):=(C_0\setminus \{o_{C_0}\},  E_0\setminus \{o_{C_0}\}) \rightarrow (S_\epsilon, \Delta_\epsilon)$. The Chern class of this orbifold $\bC^*$-bundle, denoted by $c_1(C_0^{\circ} /S_\epsilon)$, is contained in ${\rm Pic}(S_\epsilon)$ and is ample. One can extract $S_\epsilon$ over $C_0$ to get a birational morphism $\mu: Y_\epsilon\rightarrow C_0$ with the exceptional divisor isomorphic to $S_\epsilon$. (We note that this is an example of the Dolgachev-Pinkham-Demazure construction, see e.g. \cite{Kol04}.)

Because $C_0$ has a $\bQ$-Gorenstein klt singularity at $o_{C_0}$, by \cite[40-42]{Kol04} we know that $c_1(C_0^{\circ}/S_\epsilon)=-r^{-1} (K_{S_\epsilon}+\Delta_\epsilon)$ for $r\in \bQ_{>0}$ and $(S_\epsilon, \Delta_\epsilon)$ has klt singularities. 
So $(S_\epsilon, \Delta_\epsilon)$ is a Koll\'{a}r component over $v\in (C_0,  E_0)$. 

To transfer this to $(X, o)$, we notice that by our construction the graded ring $\gr_{w_\epsilon}R$ is isomorphic to 
$${\rm gr}_{\wt_{\lambda_\epsilon}} \bC[C_0]={\rm gr}_{\wt_{\lambda_\epsilon}} (\gr_wR)(\cong \gr_wR).$$ The exceptional divisor of the filtered blow-up over $X$ associated to $w_\epsilon$ is isomorphic to that associated to $\wt_{\lambda_{\epsilon}}$ over $C_0$, which is ${\rm Proj}({\rm gr}_{\wt_{\lambda_\epsilon}}(\gr_wR))$ and isomorphic to $(S_\epsilon, \Delta_\epsilon)$. Since $(S_\epsilon, \Delta_\epsilon)$ is klt, by the inversion of adjunction we know that the filtered blow up is indeed a plt blow up and hence $S_\epsilon$ is a Koll\'{a}r component over $(X, D, o)$.  

The same argument also applies to $(C,C_D)$.
\end{proof}

\section{Examples}\label{s-exam}

In this section, we find out the minimizer for some examples of klt singularities $(X,o)=(\Spec R,\fm)$. We note that by Proposition \ref{p-inf} and Theorem \ref{p-equality}, this also explicit calculates the sharp lower bound of normalized multiplicities, i.e., 
$$\inf_{\fa} \lct(X,\fa)^n\cdot \mult(\fa)$$ for all $\fm$-primary ideals $\fa$ and gives the equality condition, which generalizes the results in \cite{dFEM04} on a smooth point. 
\begin{exmp}\label{e-example}
We explicitly compute the minimizer for quotient, $A_k$, $E_k$ and weakly exceptional singularities in the below. 
\end{exmp}
\begin{enumerate}
\item  Let $(X,o)=(\mathbb{C}^n,0)/G$ be an $n$-dimensional quotient singularity. Let $E\cong \mathbb{P}^{n-1}$ be the exceptional divisor over $\mathbb{C}^n$ obtained by blowing up $0$. Then denote by $S$ the valuation over $(X,o)$ which is the quotient of $E$ by $G$. Applying Lemma \ref{l-finite} to the pull back of Koll\'ar components on $X$, we know that 
$$\hvol_{X,o}(\ord_S)\le \hvol_{X,o}(\ord_F)$$ for any Koll\'ar component $F$ over $(X,o)$. So $\hvol_{X,o}$ minimizes at $\ord_S$ with 
$$\vol(o, X)=\hvol_{X,o}(\ord_S)=\frac{n^n}{|G|}.$$ For $n=2$, this is proved in \cite[Example 4.9]{LL16}.

\item
Consider the $n$-dimensional $A_{k-1}$ singularity:
$$
X=A^{n}_{k-1}:=\{z_1^2+\cdots+z_n^2+z_{n+1}^k=0\}.
$$
We consider cases when $k>\frac{2(n-1)}{n-2}$ (for other cases, see \cite[Example 4.7]{LL16}). We want to show that the valuation corresponding to the weight
$w_*=(n-1, \cdots, n-1, n-2)$ is a minimizer among all valuations in $\hvol_{(X,D),o}$. In \cite[Example 2.8]{Li15a}, these are computed out as the minimizer among all valuations obtained by weighted blow ups on the ambient space $\mathbb{C}^{n+1}$. 

We notice that under the weighted blow up corresponding to $w_*$, we have a 
birational morphism $Y\rightarrow X$ with exceptional divisor $S$ isomorphic to the weighted hypersurface
$$
S:=\{Z_1^2+\cdots+Z_n^2=0\}\subset \bP(n-1, \cdots, n-1, n-2)=:\bP_{w_*}.
$$
Because $\bP_{w_*}\cong \bP(1,\cdots, 1, n-2)$, it is easy to see that $S$ is isomorphic to $\bar{C}(Q, -K_Q)$ where 
$Q=Q^{n-2}=\{Z_1^2+\cdots+Z_n^2=0\}\subset\bP^{n-1}$ (notice that $K_{Q}^{-1}=(n-2)H$). On the other hand, because $\bP_{w_*}$ is not well-formed, we have
codimensional one orbifold locus along the infinity divisor $Q_\infty\subset S$ with the isotropy group $\bZ/(n-1)\bZ$. So the corresponding Koll\'{a}r component
is the log Fano pair $\left(S, (1-\frac{1}{n-1})Q_\infty\right)$. Because $Q_\infty$ has KE, by \cite{LL16} there is a conical KE on the pair $\left(S, (1-\frac{1}{n-1})Q_\infty\right)$.
So by Theorem \ref{t-main} and Theorem \ref{t-main2}, $\ord_S$ is indeed a global minimizer of $\hvol$ that is the unique minimizer among all Koll\'{a}r components.
Notice that for any higher order klt perturbation of these singularities, $w_*$ is also a minimizer.

\item 
We can also use Theorem \ref{t-main} to verify that the valuations in \cite[Example 2.8]{Li15a} for $E_k$ (k=6,7,8) are indeed minimizers in $\hvol_{(X,D),o}$, which are unique among Koll\'{a}r components. To avoid repetition, we will only do this for $E_7$ 
singularities. The argument for other two cases are similar. So consider the $(n+1)$-dimensional $E_7$ singularity:
\[
X^{n+1}=\{z_1^2+z_2^2+\cdots+z_n^2+z_{n+1}^3z_{n+2}+z^3_{n+2}=0\}\subset\bC^{n+2}.
\]
\begin{enumerate}
\item If $n+1=2$, then $X^2$ is a quotient singularity $\bC^2/E_7$ and so we get the unique polystable component by \cite[Example 4.9]{LL16} and example 1 above.

\item If $n+1=3$, then $X^3=\{z_1^2+z_2^2+z_3^3z_4+z_4^3=0\}\subset \bC^4\cong \{w_1 w_2+w_3^3w_4+w_4^3=0\}\subset \bC^4$ by the change of variables. This singularity has a 
$(\bC^*)^2$-action and is an example of $T$-variety of complexity one. By the recent work in \cite[Theorem 7.1 (II)]{CoSz16}, $X^3$ indeed has a Ricci flat cone K\"{a}hler metric 
associated to the canonical $\bC^*$-action associated to $w_*$. So by \cite[Theorem 1.7]{LL16}, the unique K-polystable Koll\'{a}r component is given by the orbifold $X^3/\langle w_*\rangle$.

\item $n+1=4$, then under the weighted blow up corresponding to $w_*=(9,9,9,5,6)$, we have a birational morphism 
$\hat{X}\rightarrow X$ with exceptional divisor $E$ isomorphic to the weighted hypersurface
$$
E=\{z_1^2+z_2^2+z_3^2+z_5^3=0\}\subset \bP(9, 9, 9, 5, 6)=\bP(w_*).
$$
Since $\bP(w_*)$ is not well-formed, we have:
$$
E\cong \{z_1^2+z_2^2+z_3^2+z_5^3=0\}\subset \bP(3, 3, 3, 5, 2)=\bP'. 
$$
with orbifold locus of isotropy group $\bZ/3\bZ$ along $$
V=\{z_1^2+z_2^2+z_3^2+z_5^3=0\}\subset \bP(3,3,3,2).
$$

Alternatively, $E$ is a weighted projective cone over the weighted hypersurface. It is easy to see that as an orbifold $(V, \Delta) \cong \left(\bP^2, (1-\frac{1}{3})Q\right)$ where $Q=\{z_1^2+z_2^2+z_3^2=0\}\subset \bP^2$.
By \cite{LS14}, there exists an orbifold K\"{a}hler-Einstein metric on $(V, \Delta)$.
Notice that $-(K_V+\Delta)=3L-\frac{4}{3}L=\frac{5}{3}L$ where $L$ is the hyperplane bundle of $\bP^2$. Denoting by $H$ the hyperplane bundle of $\bP'$, then
$H|_V=L/3$. If $V$ is considered as a divisor of $E$, then 
$$V|_V=\big(\{z_4=0\}\cap E\big)=5H|_V=\frac{5}{3}L.$$ So $-(K_V+\Delta)= V|_V$. 

Then by \cite[Theorem 1.7]{LL16}, there exists an orbifold K\"{a}hler-Einstein metric on $E$ because the cone angle at infinity is $\beta=1/3$. Thus the unique log-K-semistable (actually log-K-polystable) Koll\'{a}r component is given by the pair $\left(E, \left(1-\frac{1}{3}\right)V\right)$.

\item
$n+1=5$, under the weighted blow up corresponding to $w_*=(3, 3, 3, 3, 2, 2)$, we have a birational morphism $\hat{X}\rightarrow X$ with exceptional divisor $E$ isomorphic to the 
weighted hypersurface:
\[
E=\{z_1^2+z_2^2+z_3^2+z_4^2+z_6^3=0\}\subset \bP(3, 3, 3, 3, 2, 2)=:\bP(w_*).
\]
This is a weighted projective cone over the weighted hypersurface:
\[
V=\{z_1^2+z_2^2+z_3^2+z_4^2+z_6^3=0\}\subset \bP(3, 3, 3, 3, 2).
\]
As orbifold, we have $(V, \Delta)=\left(\bP^3, (1-\frac{1}{3})Q\right)$. By \cite{LS14, Li13}, $(V, \Delta)$ is log-K-semistable and degenerates to a conical K\"{a}hler-Einstein pair. So by \cite{LL16}, we know 
that $(E, (1-\beta)V_\infty)$ is log-K-semistable. To determine $\beta$, we notice that 
$$-(K_V+\Delta)=4L-\frac{4}{3}L=\frac{8}{3}L=4\cdot \frac{2}{3}L=4\cdot V_\infty|_V.$$ 
So $\beta=1$ and we conclude that the unique (strictly) K-semistable Koll\'{a}r component is indeed the $\bQ$-Fano variety $E$.

\item $n+1\ge 6$. Under the weighted blow up corresponding to $w_*=(n-1, \dots, n-1, n-2, n-2)$, we have a birational morphism $\hat{X}\rightarrow X$ with exceptional divisor $E$ isomorphic to
the weighted hypersurface:
\[
E=\{z_1^2+\cdots+z_n^2=0\}\subset \bP(n-1, \cdots, n-1, n-2, n-2)=:\bP(w_*).
\]
This is the weighted projective cone over 
\[
V=\{z_1^2+\cdots+z_n^2=0\}\subset \bP(n-1, \cdots, n-1, n-2).
\]
By the discussion in the above $A^n_{k-1}$ singularity case, we know that as an orbifold,
$(V, \Delta)=\left(\bar{C}(Q, -K_Q), (1-\frac{1}{n-1})Q_\infty\right)$, which has an orbifold K\"{a}hler-Einstein metric. Notice that
\[
-(K_V+\Delta)=(n(n-1)+n-2)H|_V-2(n-1)H|_V=n (n-2)H|_V.
\]
By \cite[Theorem 1.7]{LL16}, the $\bQ$-Fano variety $E$ indeed has an orbifold K\"{a}hler-Einstein metric ($\beta=n/n=1$ at infinity) and hence by Theorem \ref{t-main} is the unique K-semistable (actually K-polystable) Koll\'{a}r component.

\end{enumerate}

We remark that, in the case of $D_{k+1}$ singularities, since the valuations computed out in \cite[Example 2.8]{Li15a} could be irrational, the result in this paper does not directly tell whether it is a minimizer in $\Val_{X,o}$. This irregular situation is studied in the following paper \cite{LX17} (see also \cite[section 6]{LL16}).

\item A notion called weakly-exceptional singularity is introduced in \cite{Pro00}. As the name suggested, this is a weaker notion than the exceptional singularity introduced in \cite{Sho00}, which forms a special class of singularities in the theory of local complements. In our language, a singularity $(X,o)$ is {\it weakly-exceptional} if and only if it has a unique Koll\'ar component $S$. We know that if a singularity is weakly-exceptional, then the log $\alpha$-invariant for the log Fano $(S,\Delta_S)$ is at least 1 (see \cite[Theorem 4.3]{Pro00} and \cite{CS14}). In particular, we know that $(S,\Delta_S)$ is K-semistable (see \cite[Theorem 1.4]{OS14} or \cite[Theorem 3.12]{Ber13}). And by Theorem \ref{t-main} and \ref{t-main2}, we know $\ord_S$ is the unique minimizer of $\hvol(S)$ among all Koll\'ar components. See \cite{CS14} for examples of weakly exceptional singularities. 
\end{enumerate}

\bigskip

Finally, we should point out that there are examples of minimizers from Sasakian-Einstein metrics. See \cite{LL16, LX17} for details. The works in \cite{LX17, LWX18} also apply minimization of normalized volumes to Donaldson-Sun's conjecture about metric tangent cones on Gromov-Hausdorff limits of K\"{a}hler-Einstein Fano manifolds.



\noindent Chi Li

\noindent{Department of Mathematics, Purdue University, West Lafayette, IN 47907-2067}

\noindent{li2285@purdue.edu}

\bigskip

\noindent Chenyang Xu

\noindent {Beijing International Center for Mathematical Research,
       Beijing 100871, China}

\noindent    {cyxu@math.pku.edu.cn}

\medskip

\noindent {{\it Current address}: MIT, Cambridge, MA 02139, USA}

\noindent    {cyxu@math.mit.edu}

\end{document}